\theoremstyle{definition}
\newtheorem{definition}{Definition}
\newtheorem{theorem}{Theorem}
\newtheorem{lemma}{Lemma}
\newtheorem{corollary}{Corollary}
\newtheorem{remark}{Remark}
\begin{document}

\begin{frontmatter}

\title{Modified Lomax Model: A heavy-tailed distribution for fitting large-scale real-world
complex networks}

\author{Swarup Chattopadhyay $^{1,3}$}
\author{Tanujit Chakraborty \corref{cor1}$^2$}
\author{Kuntal Ghosh$^1$}
\author{Asit K. das$^3$}

\cortext[cor1]{Corresponding author: \\ Email:
tanujit\_r@isical.ac.in (Tanujit Chakraborty) \\ Tel. +91-33-2575
3104/3100, Fax
+91-33-2578-3357 \\
}
\address{$^{1}$ Machine Intelligence Unit, Indian Statistical Institute
Kolkata-700108, India \\
$^2$ SQC and OR, Indian Statistical Institute
Kolkata-700108, India. \\
$^3$ Department of C.S., Indian Institute of Engineering Science and
Technology, Shibpur, Howrah, India.\\
 }

\begin{abstract}

Real-world networks are generally claimed to be scale-free, meaning
that the degree distributions follow the classical power-law, at
least asymptotically. Yet, closer observation shows that the
classical power-law distribution is often inadequate to meet the
data characteristics due to the existence of a clearly identifiable
non-linearity in the entire degree distribution in the log-log
scale. The present paper proposes a new variant of the popular
heavy-tailed Lomax distribution which we named as the Modified Lomax
(MLM) distribution that can efficiently capture the crucial aspect
of heavy-tailed behavior of the entire degree distribution of
real-world complex networks. The proposed MLM model, derived from a
hierarchical family of Lomax distributions, can efficiently fit the
entire degree distribution of real-world networks without removing
lower degree nodes as opposed to the classical power-law based
fitting. The MLM distribution belongs to the maximum domain of
attraction of the Frechet distribution and is right tail equivalent
to Pareto distribution. Various statistical properties including
characteristics of the maximum likelihood estimates and asymptotic
distributions have also been derived for the proposed MLM model.
Finally, the effectiveness of the proposed MLM model is demonstrated
through rigorous experiments over fifty real-world complex networks
from diverse applied domains.
\end{abstract}

\begin{keyword}
Complex networks; Degree distribution; Lomax distribution;
Heavy-tailed distribution; Power-law; Statistical properties.
\end{keyword}

\end{frontmatter}


\section{Introduction}

The modeling and structural aspects of large scale real-world
complex networks, including social, information, collaboration,
communication, etc. have been well studied during the past decade
\cite{albert2002statistical,albert2000error,newman2001structure,newman2003structure,chacoma2018dynamical}
by many researchers. The World Wide Web, Twitter, Orkut, Youtube,
DBLP, Wiki talk, Facebook, LinkedIn are examples of such large scale
real-world complex networks. These networks are characterized by
several important structural, emergent properties like degree
distribution, correlation coefficient, average nearest neighbor,
average path length, clustering coefficient, community structure,
etc. Recently, the modeling and statistical aspects of such emergent
structural properties, therefore, remain an important research area
in the study of large scale real-world complex networks
\cite{zarandi2018community,newman2003structure,cui2014detecting,nie2016dynamic,shakibian2018statistical}.
In this regard, the node degree distribution has been well studied
and viewed as an important structural characteristic of real-world
networks \cite{muchnik2013origins}. In 1999, Barabasi and Albert
\cite{barabasi1999emergence,albert1999diameter} modeled the node
degree distribution of the World Wide Web (WWW) using a power-law.
Since then, many researchers have also favored the use of heavy
tailed power-law in modeling the node degree distribution of
real-world networks such as collaboration networks, communication
networks, social networks, biological networks, etc
\cite{clauset2009power,barabasi2005origin}.  Mathematically, a
quantity $x$ follows a power-law if it is drawn from a probability
distribution $P(x) \propto x^{-\alpha}$ where, the parameter
$\alpha$ is a positive constant and is known as exponent or scaling
parameter of the distribution. Thus it is common to encounter the
claim that most of the real-world networks are scale-free, meaning
that the degree distributions follow single power-law.  Despite
this, a closer observation, while fitting, shows that the classical
power-law distribution is  often  inadequate  to  meet  the  data
characteristics adequately because of the existence of an
identifiable non-linearity (bend) when the entire degree
distribution is considered in log-log  scale as shown in Figures
\ref{fig:sub-first} and \ref{fig:sub-second} (elaborated later).

This feature (non-linearity) of the entire degree distribution,
depending on when and where it is considered or ignored, possibly
constitute the reason why the universality vis-a-vis scarcity of
scale-free networks has remained controversial ever since its
inception \cite{liljeros2001web,jones2003sexual}. The debate has
continued to crop up time and again throughout the last twenty-one
years \cite{newman2005power,
seshadri2008mobile,clauset2009power,barabasi2005origin} and in very
recent times too whence it has been claimed through an empirical and
extensive study that the power-law distribution does not fit well in
most cases and thereby produces a significant fitting error,
followed by counter-claims \cite{broido2019scale}.

This apart, researchers have also argued differently in favor of
scale-free structure while suggesting some softer statistical
criteria for scale-freeness \cite{holme2019rare, voitalov2019scale,
stumpf2012critical}. Especially significant in this context is the
following quote \cite{stumpf2012critical}: "The fact that
heavy-tailed distributions occur in complex systems is certainly
important (because it implies that extreme events occur more
frequently than would otherwise be the case)... However, a
statistically sound power-law is no evidence of universality without
a concrete underlying theory to support it. Moreover, knowledge of
whether or not a distribution is heavy-tailed is far more important
than whether it can be fit using a power-law".

Several other heavy-tailed distributions such as lognormal, Pareto
lognormal (PLN), double Pareto lognormal (DPLN), etc. also have been
proposed in modeling the degree distribution of real-world networks
instead of power-law \cite{seshadri2008mobile,sala2010brief}. Recent
research also recognized the deviations from a pure power-law
distribution over various network data sets and recommended some
other distributions for better modeling the heavy-tailed node degree
distribution
\cite{voitalov2019scale,chattopadhyay2019finding,chattopadhyay2014fitting}.
Thus, identifying the reasons for deviation of single power-law
while fitting and looking for the alternative models which  can
efficiently  capture  the crucial aspect of heavy-tailed  and
long-tailed  behaviour  of  the  entire degree distribution of
real-worlld complex networks continue to remain a challenging task
of current research in the field of complexity science even as it
steadily gravitates toward data science \cite{holme2019rare,
voitalov2019scale, stumpf2012critical}.

 \begin{figure}[ht]
 \begin{subfigure}{0.5\textwidth}
   \centering
   \includegraphics[width=0.9\linewidth]{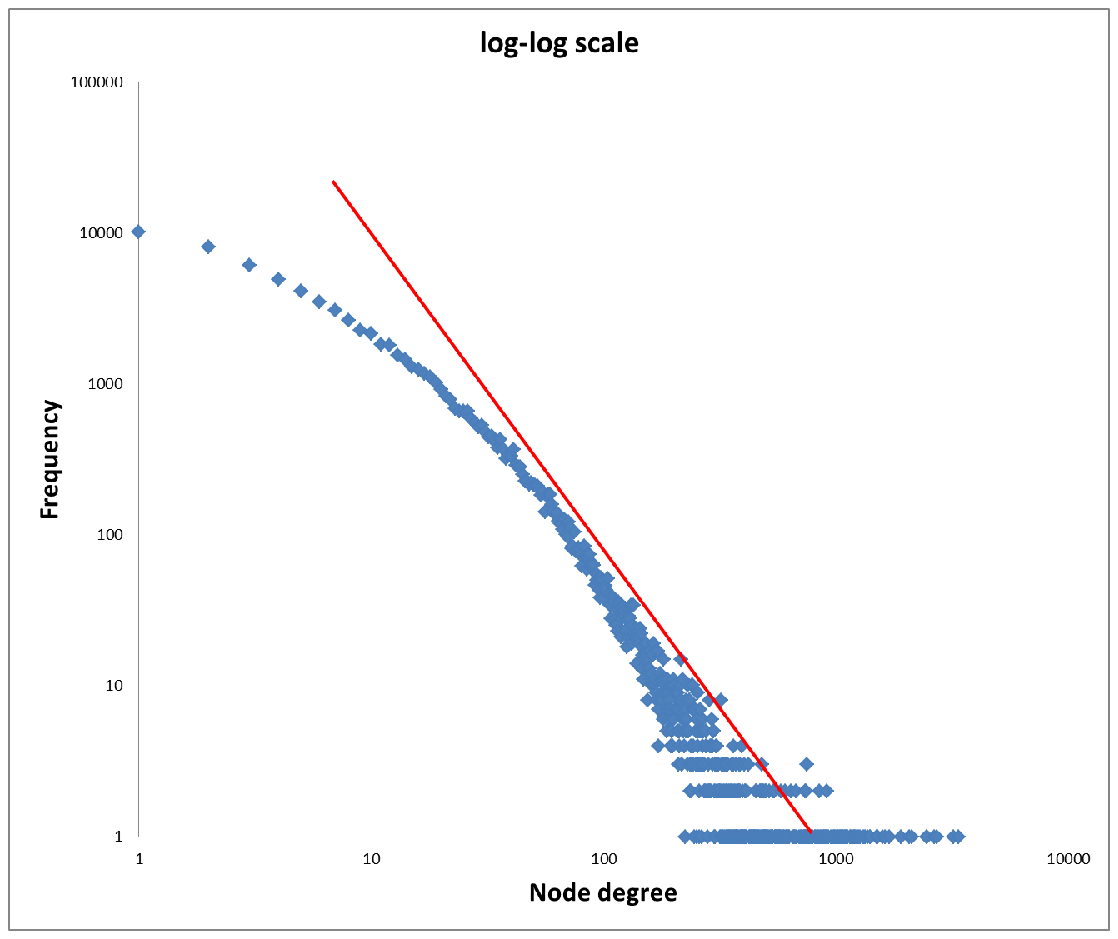}
   \caption{Twitter network}
   \label{fig:sub-first}
 \end{subfigure}
 \begin{subfigure}{0.5\textwidth}
   \centering
   \includegraphics[width=0.9\linewidth]{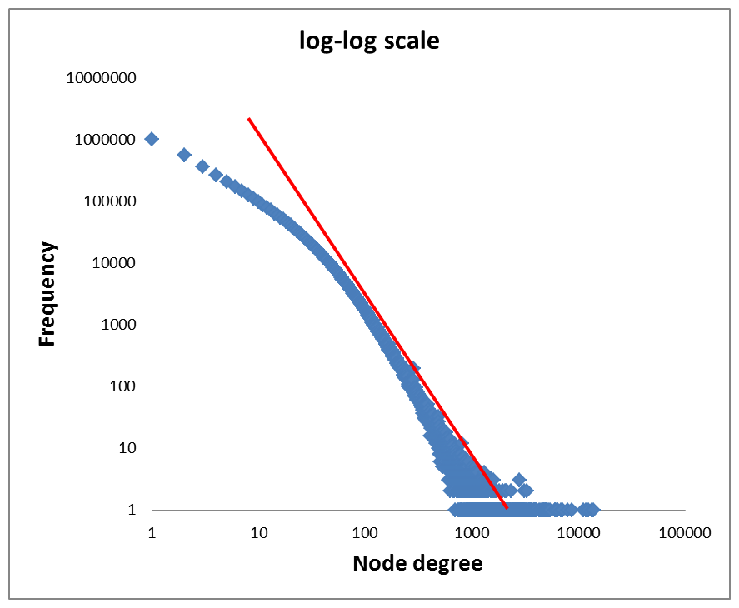}
   \caption{LiveJournal network}
   \label{fig:sub-second}
 \end{subfigure}
 \caption{Plot of degree distribution}
 \label{fig:fig}
 \end{figure}

\textit{Motivation:} Networks are a powerful way to represent and
study the structure of real-world complex systems. Across various
applied domains of networks, it is common to encounter the claim
that most of the real-world networks are scale-free, meaning that
the degree distributions follow single power-law, though the
universality of scale-free networks remains controversial as already discussed above.

Now consider an example where Figures \ref{fig:sub-first} and
\ref{fig:sub-second} that depict the plot of entire degree
distribution in the log-log scale of the Twitter and LiveJournal
social networks. The horizontal axis represents the unique degree
value ($x$), and the vertical axis represents the corresponding
frequency. In these networks, a node represents a single user, and
an edge represents a follower of that user. From these figures, it
is clear that the pattern of the degree distribution of these
networks does not match with the straight-line representation in the
log-log scale through a single power-law. Usually, while fitting the
node degree distribution, the single power-law is applied only for
values of degree higher than some minimum (say, $x_{min}$) and the
exponent $\alpha$ is estimated from the data using MLE accordingly.
Thus power-law distribution provides better fitting or in other
words better inclined to the right tail of the data unless
otherwise, some ``unimportant" (i.e., lower degree) nodes are left
out. Analytically, we can say that this inadequacy of fitting a
single power-law occurs because of nonlinear behavior of the degree
distribution curve in the log-log scale. This motivates the
researchers to use other heavy-tailed probability models with
non-negative exponent for better modeling the entire degree
distribution of real-world networks. To capture these nonlinearities
in the degree distribution of the real-world complex networks in a
log-log scale, previous studies used various heavy-tailed
probability distributions \cite{voitalov2019scale,
seshadri2008mobile, chattopadhyay2019finding}. In this current
research, we study the behavior of the entire degree distributions
with a new variant of Lomax distribution that has wide applications
in the field of actuarial science, reliability modeling, economics
and computer science
\cite{lomax1954business,ahsanullah1991record,hassan2016optimum,childs2001order}.
The Lomax distribution is essentially a Pareto Type-II distribution
that has been shifted so that its support begins at zero
\cite{ahsanullah1991record,lomax1954business}. Some extension and
generalization of the Lomax distribution has been carried out for
analyzing reliability and survival data sets in the past
\cite{ahsanullah1991record,al2001statistical,balakrishnan1994relations}.
Recent research also focused on a new generalization of Pareto
distribution with application to the breaking stress data
\cite{jayakumar2020new}. This paper proposes a modified Lomax (MLM)
distribution to be derived from a hierarchical family of Lomax
distributions where the non-negative shape parameter is assumed to
be expressible as a nonlinear function of the data.

\textit{Our contribution:} The major contribution here is to develop
a modified Lomax (MLM) distribution from a hierarchical
family of Lomax distributions for efficient modeling of the
entire degree distribution of real-world complex networks
\cite{ahsanullah1991record,lomax1954business}. The reasons for
introducing MLM distribution is to provide greater flexibility and
better fitting to the entire node degree distribution of complex
networks compared to other popularly used heavy-tailed
distributions. In other words, the proposed MLM model can be
used for effective modeling the degree distribution of complex
networks, coming from different disciplines, in the whole range of
the data without discarding some of the lower degree nodes.
Moreover, some statistical properties including extreme value and
asymptotic behavior of the proposed MLM distribution have been
studied in this context. We also provide mathematical arguments to
explain the behavior of the likelihood surface for this nonlinear
variant of the Lomax distribution, i.e., MLM distribution.
A sufficient condition for the existence of the global maximum for
the likelihood estimates is given using the notion of the
coefficient of variations (CV) and discuss the
parameter estimation procedures of the proposed MLM distribution. In order to justify the effectiveness of the proposed MLM distribution, we have compared it with the other common power-law-type distributions, viz. power-law, Pareto, lognormal, exponential, power-law with exponential cutoff and
Poisson \cite{sala2010brief,clauset2009power,newman2005power}. The
goodness-of-fit of the observed degree distribution is evaluated and
compared using a few statistical measures, viz. bootstrap
Chi-square, KL-divergence (KLD), mean absolute error (MAE) and root
mean square error (RMSE). Several real-world complex networks from
diverse fields have been used for experimental
evaluation. Empirical results confirm the effectiveness of the
proposed MLM distribution compared to other common distributions.

The remainder of the paper is organized as follows. Section \ref{model}
provides the details of the hierarchical family of Lomax distributions. We propose and interpret proposed modified Lomax (MLM) distribution in Section \ref{proposed_model}. Section \ref{statistics_prop} discusses
the statistical properties, including extreme value and asymptotic
behaviors of the proposed MLM distribution. Section \ref{experiments} is devoted to the experimental results with a detailed analysis of the results over several real-world complex networks. Finally, Section \ref{conclusion} concludes the paper with a brief discussion.

\section{Model}\label{model}
In this section, we first introduce a new family of heavy-tailed
Lomax (HLM) distributions. Further, we propose a relevant model from
this newly introduced family to model the real-world heavy-tailed
network data sets in the whole range.

\subsection{Genesis}
Lomax distribution has been used as an alternative to exponential,
power-law, gamma and weibull distribution for modeling heavy tailed
data sets \cite{atkinson1978distribution, bryson1974heavy,
chahkandi2009some, hassan2009optimum}. The cumulative distribution
function (CDF) and the probability density function (PDF) of the
Lomax model are defined as follows:

\begin{definition}
A random variable $Z$ follows Lomax distribution with parameters
$\alpha$ and $\sigma$ if the CDF is of the form:
$$F(z) = 1 - \bigg( 1+ \frac{z}{\sigma} \bigg)^{-\alpha}; \;
z \geq 0,$$ where $\alpha \; (>0)$ is the shape parameter (real) and
$\sigma \; (>0)$ is the scale parameter (real). The corresponding
PDF is defined as follows:
\begin{equation}
 f(z) = \frac{\alpha}{\sigma}\bigg( 1+
 \frac{z}{\sigma} \bigg)^{-\alpha-1}; \; z \geq 0 \label{1}
 \end{equation}
\end{definition}

 Below we introduce a new family of heavy tailed Lomax distributions which is right tail-equivalent to a power-law distribution.

\begin{definition}
A continuous random variable $X$ follows a family of heavy-tailed
Lomax (HLM) distributions if and only if it has the following CDF:
\begin{equation}
 F(x) = 1- \big( 1+ x \big)^{-m(x)}; \; x \geq 0 \label{hlm}
 \end{equation}
and $F(x)=0$ if $x\leq 0$, where $m:(0,\infty) \rightarrow
\mathbb{R}^+$ is a real, continuous, positive function which is
differentiable on $(0,\infty)$ and satisfies the following
conditions:
\begin{enumerate}
\item The function $m$ is strictly positive and have finite limit at infinity, i.e., $\displaystyle \lim_{x \to \infty} m(x) = \alpha \; (> 0). $
\item $\displaystyle \lim_{x \to 0^+} (1+x)^{m(x)} = 1 \quad \text{and} \quad  \lim_{x \to \infty } (1+x)^{m(x)} = \infty. $
\item $\displaystyle \frac{m^{'}(x)}{m(x)} \geq - \frac{1}{(1+x)\log(1+x)} , \; x > 0.$
\end{enumerate}
\end{definition}

It can be easily verified that the CDF in (\ref{hlm}) satisfying
conditions (1), (2) and (3) is a genuine CDF which can also be
expressed as follows:
\[
F(x)= 1 -\exp{\left [-m(x)\log (1+x)\right ], \; x > 0}
\]
The PDF of this new family of heavy-tailed Lomax distribution is of
the form:
\[
\displaystyle f(x)= (1+x)^{-m(x)} \left [ \frac{m(x)}{(1+x)} + m'(x)
\log (1+x) \right], \quad x > 0 \quad \text{and} \quad f(x)=0, \quad
x \leq 0.
\]
There can be a wide variety of choices of $m(x)$ satisfying
$\displaystyle \lim_{x \to \infty} m(x) = \alpha \; ( > 0 ) $. It is
noted that the simplest choice of $m(x)=\alpha$ and
$x=\frac{z}{\sigma}$ corresponds to the Lomax distribution. We
further represent this newly introduced family of Lomax
distributions as a hierarchical family in accordance with Pareto
distribution \cite{arnold2015pareto}.

\begin{definition}{(HLM Type-I family of distributions)}
Supposed that a random variable $X$  folows HLM family of
distributions as defined in (\ref{hlm}). Then with a scale parameter
$\sigma \in (0, \infty)$, the CDF of HLM Type-I family of
distributions takes the following form:
\[
F(x)=1-\left[1+ \left( \frac{x}{\sigma}-1 \right)\right]^{-m\left( \frac{x}{\sigma} -1
\right )} , \quad x > \sigma
\]
By taking $m\left( \frac{x}{\sigma} -1 \right ) = \alpha \; (> 0)$,
we obtain the classical Pareto Type-I distribution.
\end{definition}

\begin{definition}{(HLM Type-II family of distribution)}
Supposed that a random variable $X$ folows HLM family of
distributions as defined in (\ref{hlm}). Then with a location
parameter $\mu \in \mathbb{R}$ and a scale parameter $\sigma \in (0,
\infty)$, the CDF of HLM Type-II family of distributions takes the
following form:
\[
F(x)=1-\left ( 1+ \frac{x-\mu}{\sigma} \right )^{-m\left(
\frac{x-\mu}{\sigma}\right )} , \quad x > \mu
\]
By taking $m\left( \frac{x-\mu}{\sigma}  \right ) = \alpha \; (>
0)$, we obtain the Pareto Type-II distribution. Also, in addition
$\mu = 0$ corresponds to the Lomax distribution.
\end{definition}

\begin{definition}{(HLM Type-III family of distribution)}
Supposed that a random variable $X$ folows HLM family of
distributions as defined in (\ref{hlm}). Then with a location
parameter $\mu \in \mathbb{R}$, scale parameters $\sigma \in (0,
\infty)$ and a shape parameter $\gamma \; (> 0)$, the CDF of HLM
Type-III family of distributions takes the following form:
\[
F(x)=1- \left[ 1+ \left(\frac{x-\mu}{\sigma} \right
)^{\frac{1}{\gamma}} \right ]^{-m \left [ \left(
\frac{x-\mu}{\sigma}\right )^{\frac{1}{\gamma}} \right ] } , \quad x
> \mu
\]
By taking $\left[ m \left( \frac{x-\mu}{\sigma}\right
)^{\frac{1}{\gamma} } \right]= 1$, we obtain the Pareto Type-III
distribution.
\end{definition}
Obviously, the choice of $m(\cdot)$ function is subjective and any
function $m$ satisfying conditions (1), (2) and (3) will give some
known (unknown) heavy-tail Lomax distributions.

\section{Modified Lomax (MLM) Model} \label{proposed_model}

The Lomax distribution does not provide great flexibility in
modeling heavy-tailed data sets in the whole range similar to the
power-law distribution. Due to this, the trend of parameter(s)
induction to the baseline Lomax distribution has received increased
attention in the recent years. Several generalized classes of
distributions by adding additional parameters such as shape and or
scale and or location in the distribution are available such as
exponentiated Lomax (EL) \cite{abdul2012recurrence}, Beta-Lomax (BL)
\cite{rajab2013five}, exponential Lomax (ELomax)
\cite{el2015exponential}, Gamma-Lomax (GL) \cite{cordeiro2015gamma}
and Gumbel-Lomax (GuLx) model \cite{tahir2016gumbel}.

This paper provides a new modified version of the Lomax distribution
called modified Lomax (MLM) distribution. MLM distribution is shown
to be an asymmetric distribution, which provides great fit in
modeling large-scale heavy-tailed data sets. The proposed MLM model
is derived from the HLM family of distributions (in particular, HLM
Type-II model) that can efficiently model the entire degree
distribution of real-world networks. In other words, the proposed
MLM model can be used for effective modeling the degree distribution
of real-worlld complex networks in the whole range without
discarding lower degree nodes. We define a relevant model from the
newly introduced HLM Type-II family with the location parameter $\mu
= 0$ and we choose a flexible $m(\cdot)$ function that depends on
two shape parameters $\alpha$ and $\beta$ satisfying  $
\displaystyle \lim_{x \to \infty} m(x) = \alpha$. The rational
behind adding an additional shape parameter in the HLM Type-II
family of distribution will make the statistical model more
flexible, simple and have physical interpretation. This idea of
generalization should suffice the practical needs of working with
the non linear exponent to address the structural issue (degree
distribution) of real-world complex networks.

Now we choose a nonlinear function $m$ that adds a nonlinear exponents while fitting heavy-tailed HLM Type-II model in the degree distributions is as
follows:
$$ m(x) = \alpha \Bigg( \frac{\log \big( 1 + {x}
\big)}{1+\log \big( 1 + x \big)} \Bigg)^{\beta}.$$
The chosen $m(x)$ approaches to $\alpha$ from below if
$-1<\beta<0$ as $x \to \infty$ and approaches to $\alpha$ from above
for $\beta > 0$ as $x \to \infty$. Note that, the function $m(x)$ as
defined above includes the constant function (in this case $\alpha$)
as special cases by setting $\beta = 0$. The derivative of $m(x)$ is
given by
$$ m'(x) = \frac{\alpha\beta}{x+1} \Bigg( \frac{\log
\big( 1 + {x} \big)}{1+\log \big( 1 + {x} \big)} \Bigg)^{\beta
-1}.\bigg(1+\log \big( 1 + {x} \big)\bigg)^{-2}. $$
Now, we define a relevant model with the above choice of $m(\cdot)$
in the HLM Type-II model with $\mu=0$ and name it as Modified Lomax
Model to be denoted by $MLM(\alpha,\beta,\sigma)$. This modification
to the Lomax distribution provides more flexibility in the data
modeling since the non-negative shape parameter are assumed to be
expressed as a nonlinear function of the empirical data. Thus the
proposed MLM model with parameters $\alpha,\beta,\sigma$ could be
useful for modeling the heavy-tailed degree distribution of
real-world complex network data sets in the whole range.

\begin{definition}{\textbf{(Modified Lomax Distribution)}}
A continuous random variable $X$ follows $MLM(\alpha,\beta,\sigma)$
distribution with $\alpha \; (> 0)$ and $\beta \; (> -1)$ as the
shape parameters and $\sigma \; (>0)$ as the scale parameter if the
CDF takes the following form:
\begin{equation}\label{gplcdf}
F(x)= 1 -
\exp{\left[-\alpha\displaystyle\frac{\log^{\beta+1}(1+x/\sigma)}{[1+\log(1+x/\sigma)]^\beta}\right]},
\; x>0,
\end{equation}
and $F(x)=0$ if $x\leq 0$. The corresponding PDF is given by,
\begin{equation}\label{gplpdf}\small
f(x)=
\displaystyle\frac{\alpha\left[\beta+1+\log\left(1+\frac{x}{\sigma}\right)\right]
\left[\log\left(1+\frac{x}{\sigma}\right)\right]^\beta}
{\sigma\left(1+\frac{x}{\sigma}\right)
\left[1+\log\left(1+\frac{x}{\sigma}\right)\right]^{\beta+1}}
\exp\left[-\alpha\displaystyle\frac{\left[\log\left(1+\frac{x}{\sigma}\right)\right]^{\beta+1}}
{\left[1+\log\left(1+\frac{x}{\sigma}\right)\right]^\beta}\right],\;x>0
\end{equation}
and $f(x)=0$ if $x\leq 0$.
\end{definition}

This MLM model includes Lomax distribution $\beta=0$ as particular
case. In addition, it belongs to the new family of HLM Type-II
distribution satisfying the condition: $\displaystyle\lim_{x \to
\infty}m(x)=\alpha \; (>0)$. Due to the addition of an additional
parameter $\beta$ in the exponents of the Lomax distribution
generates various shapes (unimodal and bimodal) and provides greater flexibility (nonlinearity and heavy-tail) as shown in Figure \ref{fig:pdf_plot}. We study the
monotonicity for the PDF of the proposed MLM model in Theorem
\ref{mono} below.

\begin{figure}[h]
\begin{subfigure}{.5\textwidth}
  \centering
  \includegraphics[width=.9\linewidth]{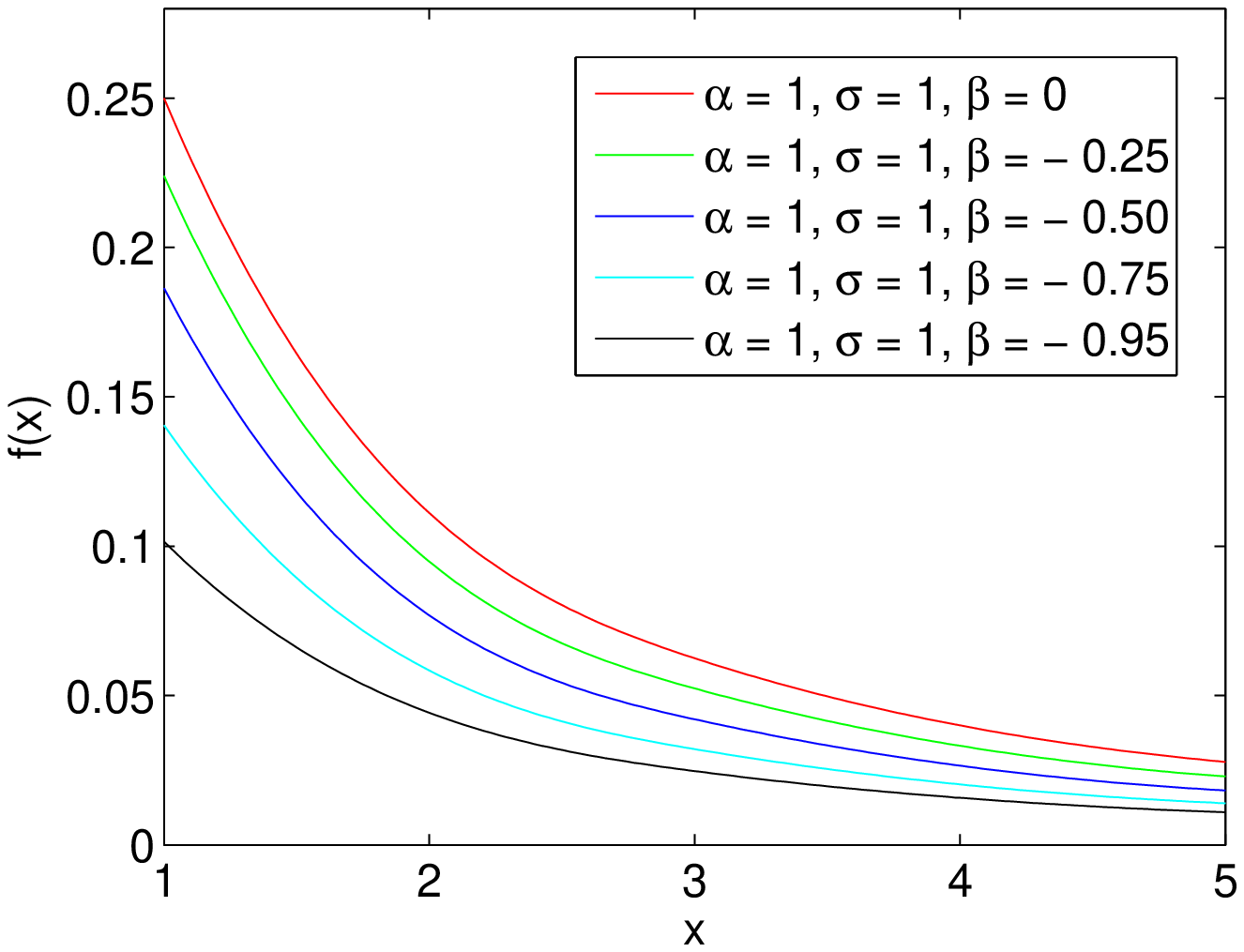}
\end{subfigure}
\begin{subfigure}{.5\textwidth}
  \centering
  \includegraphics[width=.9\linewidth]{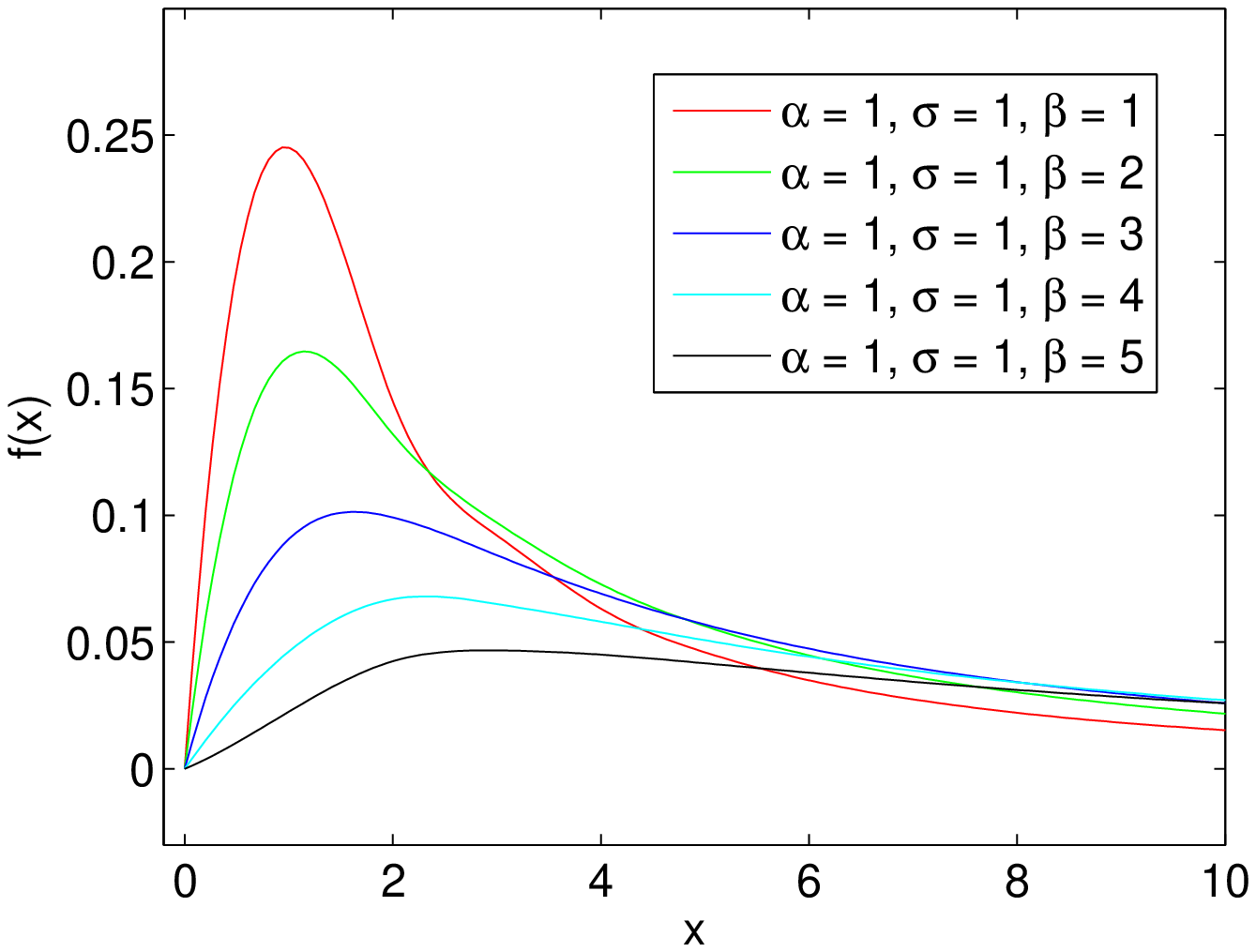}
\end{subfigure}
\begin{subfigure}{.5\textwidth}
  \centering
  \includegraphics[width=.9\linewidth]{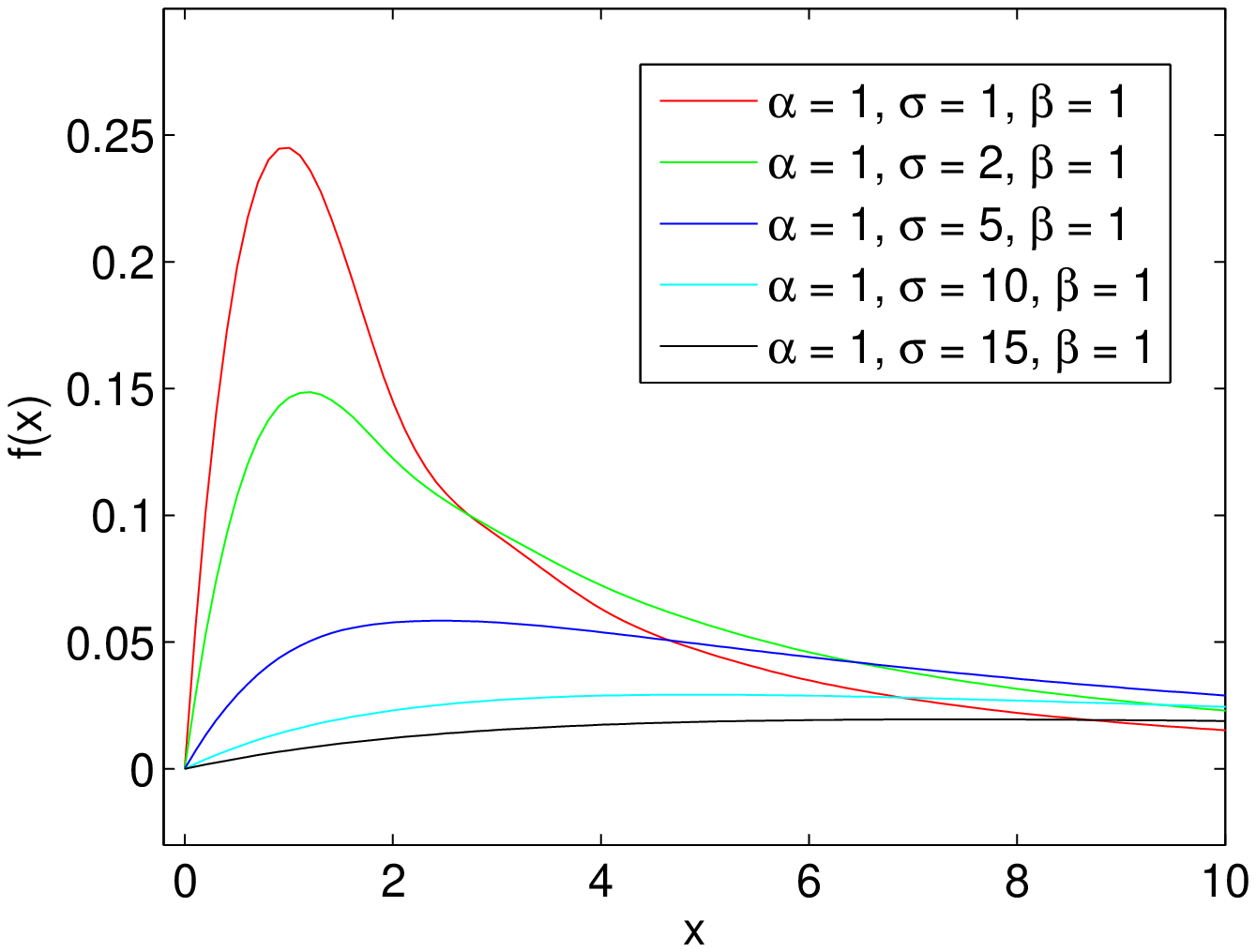}
\end{subfigure}
\begin{subfigure}{.5\textwidth}
  \centering
  \includegraphics[width=.9\linewidth]{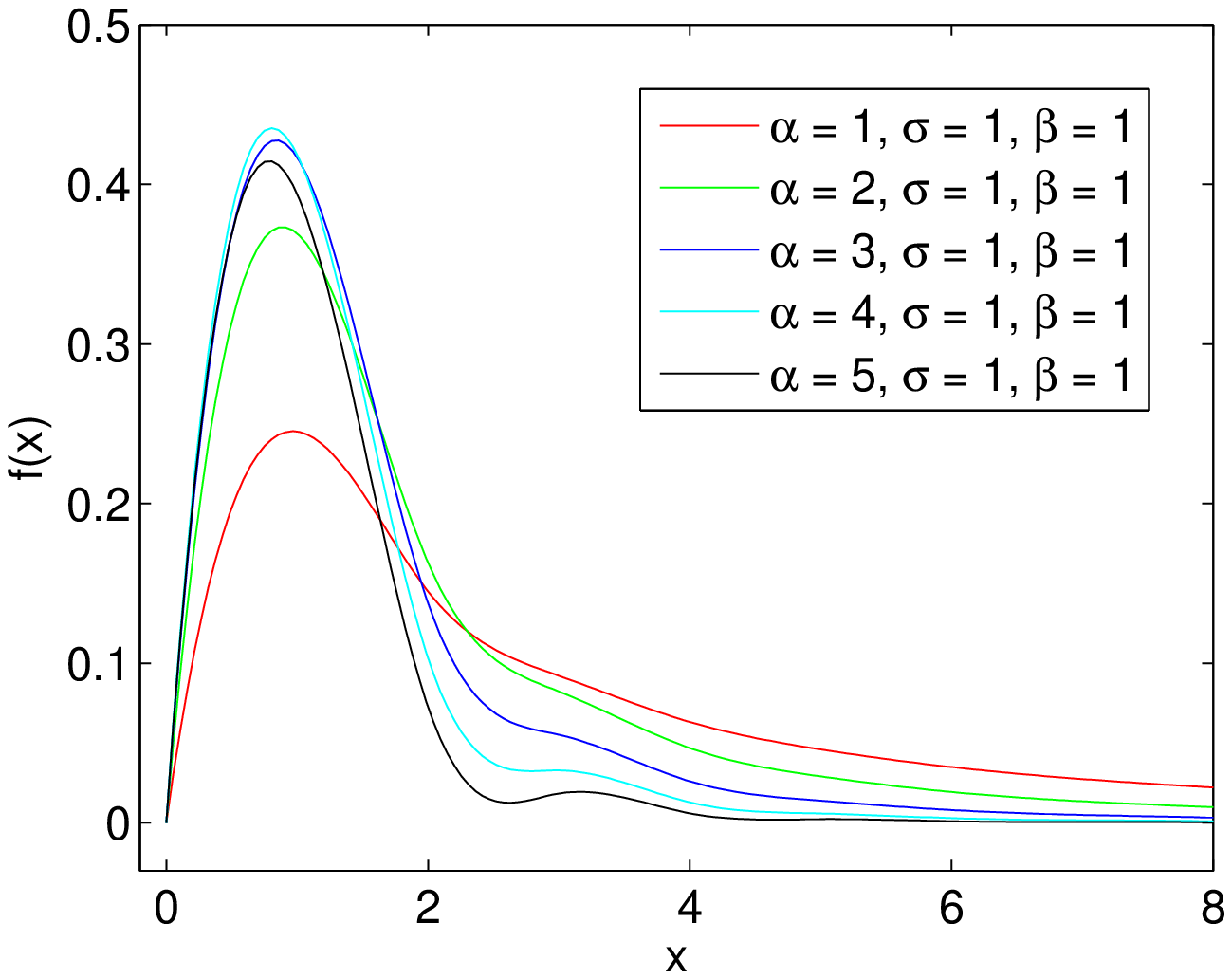}
\end{subfigure}
\caption{Plot of the PDFs of MLM distribution} \label{fig:pdf_plot}
\end{figure}

\begin{theorem}
Let $X$ be the random variable follows $MLM(\alpha, \beta, \sigma)$
distribution, then the PDF as in (\ref{gplpdf}) is a decreasing
function for $-1< \beta <0$. \label{mono}
\end{theorem}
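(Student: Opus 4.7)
The plan is to reduce the problem to monotonicity in a single transformed variable and then collapse the resulting derivative via a small algebraic cancellation. First I would introduce $u := \log(1 + x/\sigma)$. Because $u$ is strictly increasing in $x$ on $(0,\infty)$ with $du/dx = 1/[\sigma(1+x/\sigma)] > 0$, the PDF $f(x)$ is decreasing in $x$ if and only if it is decreasing in $u$. Since $f$ is strictly positive on $(0,\infty)$, this is equivalent to showing $(\log f)'(u) < 0$ for all $u > 0$.

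Rewriting $f$ in terms of $u$ gives
$$ \log f = \log\alpha - \log\sigma + \log(\beta+1+u) + \beta\log u - u - (\beta+1)\log(1+u) - \alpha\, \frac{u^{\beta+1}}{(1+u)^\beta}. $$
Logarithmic differentiation of the last exponent yields $\frac{d}{du}\!\left[\frac{u^{\beta+1}}{(1+u)^\beta}\right] = \frac{u^\beta(\beta+1+u)}{(1+u)^{\beta+1}}$, so
$$ \frac{d\log f}{du} = \frac{1}{\beta+1+u} + \frac{\beta}{u} - \frac{\beta+1}{1+u} - 1 - \alpha\, \frac{u^\beta(\beta+1+u)}{(1+u)^{\beta+1}}. $$

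The crux step is the algebraic identity
$$ \frac{1}{\beta+1+u} + \frac{\beta}{u} - \frac{\beta+1}{1+u} \;=\; \frac{\beta(\beta+1)}{u(1+u)(\beta+1+u)}, $$
which follows from putting the three fractions over the common denominator $u(1+u)(\beta+1+u)$; the $u^2$ and $u$ coefficients in the combined numerator both cancel, leaving exactly the constant $\beta(\beta+1)$. Substituting this back gives
$$ \frac{d\log f}{du} = \frac{\beta(\beta+1)}{u(1+u)(\beta+1+u)} - 1 - \alpha\, \frac{u^\beta(\beta+1+u)}{(1+u)^{\beta+1}}. $$
Now the conclusion is immediate: for $-1 < \beta < 0$ and $u > 0$ one has $\beta+1+u > 0$ and $\beta(\beta+1) < 0$, so the first summand is negative; the constant $-1$ is trivially negative; and $\alpha > 0$, $u^\beta > 0$ make the last summand negative. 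Hence $(\log f)'(u) < 0$, which transfers back to give $f'(x) < 0$ on $(0,\infty)$.

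The main obstacle is not calculus but checking the precise sign after the algebraic collapse: the factor $\beta(\beta+1)$ is exactly the quantity whose sign switches at the boundaries of the theorem's hypothesis $-1 < \beta < 0$ (and it vanishes at the Lomax case $\beta=0$, consistently reducing the derivative to $-1-\alpha$). All other steps are routine differentiation and bookkeeping; the non-mechanical content lies in recognizing that the three rational terms conspire to produce this sign-sensitive quantity.
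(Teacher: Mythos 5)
Your proof is correct, and it is genuinely different from --- and more complete than --- the argument in the paper. The paper differentiates the density $f$ directly in $x$ and displays $f'$ as a sum of three terms, then asserts that the negativity for $-1<\beta<0$ is ``trivial''; but the third displayed term carries a factor $+(1+\beta)$ and is manifestly positive on that range, so the sign of the sum is not immediate from the paper's decomposition and the assertion is left unjustified. You instead substitute $u=\log(1+x/\sigma)$, pass to $\log f$, and isolate the one nontrivial cancellation in the identity
\[
\frac{1}{\beta+1+u}+\frac{\beta}{u}-\frac{\beta+1}{1+u}\;=\;\frac{\beta(\beta+1)}{u(1+u)(\beta+1+u)},
\]
which I have checked (the $u^2$ and $u$ coefficients of the combined numerator do vanish). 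After this collapse every summand of $(\log f)'(u)$ is strictly negative for $-1<\beta<0$, $u>0$: the rational term because $\beta(\beta+1)<0$, the $-1$ trivially, and the $-\alpha u^{\beta}(\beta+1+u)/(1+u)^{\beta+1}$ term because $\alpha>0$ and $\beta+1+u>0$. Your change of variables is legitimate since $u$ is strictly increasing in $x$, and positivity of $f$ justifies working with $\log f$. What your route buys is a proof in which the sign is actually visible term by term, plus a transparent explanation of why the hypothesis $-1<\beta<0$ is the natural one (it is exactly the condition $\beta(\beta+1)<0$), and a consistency check at $\beta=0$ where the derivative reduces to $-1-\alpha$ times the appropriate positive factor, recovering monotone decrease of the Lomax density. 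The paper's direct differentiation, by contrast, would need an additional grouping or domination argument to close the gap it labels as trivial.
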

\begin{proof}
Differentiating (\ref{gplpdf}) w.r.t. $x$, we have
\begin{equation}
\begin{split}\label{derglm}\small
f'(x) & =
-\displaystyle\frac{\alpha^2\left[1-F(x)\right]\left[\beta+1+\log\left(1+\frac{x}{\sigma}\right)\right]^2
\left[\log\left(1+\frac{x}{\sigma}\right)\right]^{2\beta}}
{\sigma^2\left(1+\frac{x}{\sigma}\right)^2
\left[1+\log\left(1+\frac{x}{\sigma}\right)\right]^{2\beta+2}} \\
&
-\displaystyle\frac{\alpha\left[1-F(x)\right]\left\{\left[\beta+1+\log\left(1+\frac{x}{\sigma}\right)\right]
\left[\log\left(1+\frac{x}{\sigma}\right)\right]^{\beta}+(1+\beta)\left[\log\left(1+\frac{x}{\sigma}\right)\right]^{\beta-1}\right\}}
{\sigma^2\left(1+\frac{x}{\sigma}\right)^2
\left[1+\log\left(1+\frac{x}{\sigma}\right)\right]^{\beta+1}}\\
& +
\displaystyle\frac{\alpha\left[1-F(x)\right](1+\beta)\left[\beta+1+\log\left(1+\frac{x}{\sigma}\right)\right]
\left[\log\left(1+\frac{x}{\sigma}\right)\right]^{\beta-1}}
{\sigma^2\left(1+\frac{x}{\sigma}\right)^2
\left[1+\log\left(1+\frac{x}{\sigma}\right)\right]^{\beta+2}}
\end{split}
\end{equation}
Trivially, if $-1<\beta<0$, then $f'(x)< 0$. Thus, $f(x)$ is
decreasing function if $\beta \in (-1,0)$.
\end{proof}

\section{Statistical Properties of the MLM distribution} \label{statistics_prop}
\subsection{Characterization and existence of the likelihood}\label{existnce_mle_mlm}
Initially we characterize the  maximum likelihood estimates (MLEs)
of the parameters $\alpha$ and $\sigma$ of a Lomax distribution.
Subsequently, we derived a sufficient condition for the existence of
MLEs of the MLM distribution using coefficient of variation (CV).
Given a set of samples $\{x_i\}$ of size $n$, the log-likelihood
function for the Lomax distribution, after dividing it by
the sample size $n$, is given by
\begin{equation}
\ell(\alpha, \sigma) = log{\alpha}- log{\sigma} -
\frac{(\alpha+1)}{n}\sum_{\substack{i=1}}^{n}\log\left(1+\frac{x_i}{\sigma}\right)
\label{4}
\end{equation}
Differentiating (\ref{4}) w.r.t. $\alpha$ and $\sigma$,
respectively, we have:
\begin{equation}
\frac{\partial\ell(\alpha, \sigma)}{\partial\alpha} =
\frac{1}{\alpha} -
\frac{1}{n}\sum_{\substack{i=1}}^{n}\log\left(1+\frac{x_i}{\sigma}\right)
\label{5}
\end{equation}
\begin{equation}
\frac{\partial\ell(\alpha, \sigma)}{\partial\sigma} = -
\frac{1}{\sigma} +
\frac{(1+\alpha)}{n\sigma}\sum_{\substack{i=1}}^{n}\left(\frac{x_i}{\sigma
+ x_i}\right) \label{6}
\end{equation}
Equating to zero the derivative of $\ell(\alpha, \sigma)$ w.r.t.
$\alpha$ in (\ref{5}), we obtain $\hat{\alpha}=\alpha(\sigma)$ as
follows:
\begin{equation}
\hat{\alpha}=\alpha(\sigma)=
\frac{n}{\sum_{\substack{i=1}}^{n}\log\left(1+\frac{x_i}{\sigma}\right)}\label{7}
\end{equation}
Differentiating (\ref{7}) w.r.t. $\sigma$ we have,
\begin{equation}
\alpha'(\sigma)=
\frac{\hat{\alpha}^2}{n\sigma}\sum_{\substack{i=1}}^{n}\frac{x_i}{\sigma+x_i}\label{8}
\end{equation}
It is important to note that there is no closed form solution to the
likelihood based on (\ref{5}) and (\ref{6}), and a suitable
numerical algorithm (for example, Newton-Raphson method) can be
employed to obtain the maximum likelihood estimates (MLEs) of the
$\alpha$ and $\sigma$. Different estimation procedures of the MLEs
have been discussed in previous literature, for example see
\cite{giles2013bias}. But for small or medium-sized samples,
anomalous behavior of the likelihood surface can be encountered when
sampling from the Lomax distribution. In this paper, we
characterize the profile log-likelihood function in terms of the
coefficient of variation (CV), defined as follows:
\begin{definition}
The CV is the ratio of the standard deviation $(s)$ to the mean
$(\mu)$, $$CV = \frac{s}{\mu};$$ where $\mu =
\frac{1}{n}\sum_{\substack{i=1}}^{n}x_i$ and $s=
\sqrt{\frac{1}{n}\sum_{\substack{i=1}}^{n}x_i^{2} - \mu^2}$.
\end{definition}
By using standard notation, the profile log-likelihood function based on equation \ref{4}, is given by
\begin{equation}
\ell_p(\sigma)=\sup \ell(\hat{\alpha}, \sigma)= log(\alpha(\sigma))
- log{\sigma} - 1 - \frac{1}{\alpha(\sigma)} \label{9}
\end{equation}
Differentiating (\ref{9}) w.r.t. $\sigma$, we have the following:
\begin{equation}
\ell_{p}^{'}(\sigma)= \frac{\alpha^{'}(\sigma)}{\alpha(\sigma)} -
\frac{1}{\sigma} +
\frac{\alpha^{'}(\sigma)}{\big[\alpha(\sigma)\big]^2} \label{10}
\end{equation}
Below we present the following lemmas which will be useful to find
the sufficient condition for the existence for the global maximum of
the profile log-likelihood function (\ref{9}).
\begin{lemma}
The following limit holds:
\begin{enumerate}
\item $\displaystyle \lim_{\sigma \to \infty} \sigma
\log\left(1+\frac{x}{\sigma}\right)=x;$
\item $\displaystyle \lim_{\sigma \to \infty} \frac{\sigma x}{\sigma + x}=x;$
\item $\displaystyle \lim_{\sigma \to \infty} \sigma^2 \left(
\log\left(1+\frac{x}{\sigma}\right) - \frac{x}{\sigma +
x}\right)=\frac{x^2}{2}.$
\end{enumerate}\label{lem1}
\end{lemma}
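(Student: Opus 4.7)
The strategy is to reduce all three limits to elementary calculus by the substitution $u = 1/\sigma$, so that $\sigma \to \infty$ corresponds to $u \to 0^+$, and then either apply standard limits or expand to the appropriate order in powers of $u$. Throughout I would treat $x$ as a fixed real number (the case $x=0$ is trivial for all three identities) and take $\sigma$ large enough that $|x|/\sigma < 1$, which simultaneously validates the Taylor expansion of $\log(1+x/\sigma)$ and the geometric series for $1/(1+x/\sigma)$.

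For (1), I would rewrite $\sigma \log(1+x/\sigma) = \log(1+xu)/u$ and invoke the standard derivative limit $\lim_{u \to 0}\log(1+xu)/u = x$, which is immediate by L'Hopital's rule or by $\log(1+t)/t \to 1$. For (2), the algebraic rearrangement $\sigma x/(\sigma+x) = x/(1+x/\sigma)$ reduces the limit to $x/1 = x$ by continuity. Both are essentially one-line verifications.

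The substantive part is (3). Here I would expand
\[
\log\left(1+\frac{x}{\sigma}\right) = \frac{x}{\sigma} - \frac{x^2}{2\sigma^2} + O(\sigma^{-3})
\]
and, using the geometric series,
\[
\frac{x}{\sigma+x} = \frac{x/\sigma}{1+x/\sigma} = \frac{x}{\sigma} - \frac{x^2}{\sigma^2} + O(\sigma^{-3}).
\]
Subtracting, the $x/\sigma$ terms cancel and the $1/\sigma^2$ coefficient becomes $-1/2-(-1) = 1/2$, so
\[
\log\left(1+\frac{x}{\sigma}\right) - \frac{x}{\sigma+x} = \frac{x^2}{2\sigma^2} + O(\sigma^{-3}),
\]
and multiplication by $\sigma^2$ gives the claimed limit $x^2/2$.

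The only delicate point is in (3): without the exact cancellation of the $O(1/\sigma)$ terms, multiplying by $\sigma^2$ would produce a divergent expression, so the expansions must be carried to second order with the correct constants rather than stopped at first order. An alternative route that avoids manipulating series would be to write the bracket as $g(u) = \log(1+xu) - xu/(1+xu)$ and apply L'Hopital's rule twice to $g(u)/u^2$, noting $g(0)=g'(0)=0$ and computing $g''(0)=x^2$; this yields the same answer $x^2/2$ via a short differentiation and may be the cleaner presentation for the final write-up.
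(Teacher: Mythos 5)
Your argument is correct and follows exactly the route the paper indicates: its proof of this lemma is the single sentence that the result ``can easily be done using series expansions,'' and your write-up supplies precisely those expansions (with the correct second-order terms needed for part (3), where the leading $x/\sigma$ terms must cancel). Nothing further is needed; either your series version or the L'H\^opital variant would serve as a complete replacement for the paper's omitted details.
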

\begin{proof}
The proof is elementary and can easily be done using series
expansions.
\end{proof}
\begin{lemma}
The following limit holds:
\begin{enumerate}
\item $\displaystyle \lim_{\sigma \to \infty} \frac{1}{\alpha(\sigma)}=0;$
\item $\displaystyle \lim_{\sigma \to \infty}
\frac{\alpha(\sigma)}{\sigma}=\frac{1}{\bar{x}},$ where $\bar{x}$ is
the sample mean;
\item $\ell_{0} \equiv \displaystyle \lim_{\sigma \to \infty} \ell_{p}(\sigma) = \log\left(\frac{1}{\bar{x}}\right) - 1.$
\end{enumerate}\label{lem2}
\end{lemma}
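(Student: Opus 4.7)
The plan is to handle the three limits in sequence, using the expression $\hat{\alpha}(\sigma)=n/\sum_{i=1}^n \log(1+x_i/\sigma)$ from equation (\ref{7}) and the elementary limits collected in Lemma \ref{lem1}. The proof should be essentially a computation: no technical obstacle beyond careful bookkeeping, since Lemma \ref{lem1} already provides the analytic facts needed. The most important rewriting step is to rearrange the profile log-likelihood as $\ell_p(\sigma)=\log(\alpha(\sigma)/\sigma)-1-1/\alpha(\sigma)$ so that parts (1) and (2) feed directly into part (3).

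For part (1), I would observe that as $\sigma\to\infty$ each term $\log(1+x_i/\sigma)\to 0$, so the denominator of $\alpha(\sigma)$ tends to $0$. Hence $\alpha(\sigma)\to\infty$ and $1/\alpha(\sigma)\to 0$.

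For part (2), I would rewrite
\[
\frac{\alpha(\sigma)}{\sigma}=\frac{n}{\sum_{i=1}^n \sigma\log(1+x_i/\sigma)}.
\]
By Lemma \ref{lem1}(1), $\sigma\log(1+x_i/\sigma)\to x_i$ for each $i$, so the denominator tends to $\sum_{i=1}^n x_i=n\bar{x}$, giving the limit $1/\bar{x}$.

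For part (3), starting from (\ref{9}) I would write
\[
\ell_p(\sigma)=\log\!\left(\frac{\alpha(\sigma)}{\sigma}\right)-1-\frac{1}{\alpha(\sigma)},
\]
and then pass to the limit using parts (1) and (2), together with continuity of $\log$, to obtain $\ell_0=\log(1/\bar{x})-1$. The only small subtlety is justifying the interchange of limit and $\log$, which is immediate since $\alpha(\sigma)/\sigma$ converges to the strictly positive value $1/\bar{x}$. There is no serious obstacle; the lemma is really a corollary of Lemma \ref{lem1} once the profile likelihood is written in the form above.
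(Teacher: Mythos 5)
Your proposal is correct and follows essentially the same route as the paper: part (1) from the vanishing of each $\log(1+x_i/\sigma)$, part (2) from Lemma \ref{lem1}(1) applied to $\sigma\log(1+x_i/\sigma)$, and part (3) by rewriting $\ell_p(\sigma)=\log(\alpha(\sigma)/\sigma)-1-1/\alpha(\sigma)$ and combining the first two limits. No substantive differences to report.
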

\begin{proof}
The proofs are straightforward and can be done using Lemma
(\ref{lem1}).
\begin{enumerate}
\item $\displaystyle \lim_{\sigma \to \infty} \frac{1}{\alpha(\sigma)}=\displaystyle \lim_{\sigma \to \infty} \frac{1}{n}{\sum_{\substack{i=1}}^{n}\log\left(1+\frac{x_i}{\sigma}\right)} = \displaystyle \lim_{\sigma \to \infty} O\left(\frac{1}{\sigma}\right)=0.$
\item $\displaystyle \lim_{\sigma \to \infty}
\frac{\alpha(\sigma)}{\sigma} = \displaystyle \lim_{\sigma \to
\infty} \frac{n}{\sigma
\sum_{\substack{i=1}}^{n}\log\left(1+\frac{x_i}{\sigma}\right)} =
\frac{n}{\sum_{\substack{i=1}}^{n}{x_i}}=\frac{1}{\bar{x}}.$
\item $\displaystyle \lim_{\sigma \to \infty} \ell_{p}(\sigma) = \displaystyle \lim_{\sigma \to \infty} \left[ \log\left(\frac{\alpha(\sigma)}{\sigma}\right) - 1 -
\frac{1}{\alpha(\sigma)} \right] =\log\left(\frac{1}{\bar{x}}\right)
- 1.$
\end{enumerate}
\end{proof}

A sufficient condition for monotonic increasing (decreasing) for the
profile log-likelihood function is presented in Theorem
(\ref{theo2}) below, for sufficiently large $\sigma$. Also, we
present a sufficient condition for the existence of global maximum corresponding to the likelihood function for the Lomax distribution to be at a
finite point in Corollary (\ref{cor1}).
\begin{theorem}
Let $X$ follows $LM(\alpha, \sigma)$ distribution with $\alpha,
\sigma > 0$. A sufficient condition for $\ell_{p}(\sigma)$ to be
monotonically decreasing function is CV$ > 1$ for $\sigma \to
\infty$, and if CV$ < 1$, it is monotonically increasing.
\label{theo2}
\end{theorem}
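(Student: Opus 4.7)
My plan is to work directly with the derivative expression (\ref{10}) and feed in the closed forms for $\alpha(\sigma)$ and $\alpha'(\sigma)$ from (\ref{7}) and (\ref{8}), then perform an asymptotic expansion in $1/\sigma$. First I would combine the two $\alpha'$ terms in (\ref{10}) over a common denominator to get
\[
\ell_p'(\sigma) \;=\; \frac{\alpha'(\sigma)\bigl(\alpha(\sigma)+1\bigr)}{[\alpha(\sigma)]^2} - \frac{1}{\sigma},
\]
and substitute (\ref{8}) to rewrite this as
\[
\ell_p'(\sigma) \;=\; \frac{\alpha(\sigma)+1}{n\sigma}\sum_{i=1}^{n}\frac{x_i}{\sigma+x_i} - \frac{1}{\sigma}.
\]
Showing this identity is essentially bookkeeping and uses no properties of the sample beyond positivity.

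Next I would expand each factor as $\sigma \to \infty$ to high enough order that the leading $1/\sigma$ terms cancel and the first surviving term exposes the sign. Using the geometric series, $\sum_i x_i/(\sigma+x_i) = n\bar{x}/\sigma - n\overline{x^2}/\sigma^2 + O(\sigma^{-3})$, where $\overline{x^2} = \frac{1}{n}\sum x_i^2$. Using the Taylor expansion of $\log(1+x_i/\sigma)$ inside (\ref{7}) together with $(1+u)^{-1} = 1 - u + O(u^2)$, one gets
\[
\alpha(\sigma) \;=\; \frac{\sigma}{\bar{x}} + \frac{\overline{x^2}}{2\bar{x}^{\,2}} + O(\sigma^{-1}).
\]
This is precisely Lemma \ref{lem2}(2) made quantitative; the rigour is the same elementary series manipulation invoked in Lemma \ref{lem1}.

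Multiplying these two expansions, I expect the $\frac{\sigma}{\bar{x}}\cdot\frac{\bar{x}}{\sigma^{2}}$ product to give exactly $1/\sigma$, which cancels the $-1/\sigma$ in $\ell_p'(\sigma)$. The next-order $1/\sigma^{2}$ coefficient collects to
\[
\frac{1}{2\bar{x}\sigma^{2}}\bigl(2\bar{x}^{2} - \overline{x^2}\bigr) \;=\; \frac{\bar{x}^{2}-s^{2}}{2\bar{x}\sigma^{2}},
\]
after writing $\overline{x^2}=s^{2}+\bar{x}^{2}$. Therefore
\[
\ell_p'(\sigma) \;=\; \frac{\bar{x}^{2}-s^{2}}{2\bar{x}\sigma^{2}} + O(\sigma^{-3}),
\]
and for all sufficiently large $\sigma$ the sign of $\ell_p'(\sigma)$ agrees with the sign of $\bar{x}^{2}-s^{2}$. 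Since $\mathrm{CV}=s/\bar{x}$, the condition $\mathrm{CV}>1$ forces $\ell_p'(\sigma)<0$ (monotone decreasing near infinity), while $\mathrm{CV}<1$ gives $\ell_p'(\sigma)>0$ (monotone increasing), which is exactly the statement.

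The one delicate step is the cancellation of the $1/\sigma$ terms: if I were to expand $\alpha(\sigma)$ or $\sum x_i/(\sigma+x_i)$ only to leading order, I would lose sight of the coefficient that actually drives the sign. I would therefore be explicit about keeping the $\overline{x^2}/(2\bar{x}^{2})$ correction in $\alpha(\sigma)$ and the $-n\overline{x^2}/\sigma^{2}$ correction in $\sum x_i/(\sigma+x_i)$, and verify algebraically that after cross-multiplication the resulting $1/\sigma$ contribution is exactly $1/\sigma$. Beyond that, the argument is purely an elementary asymptotic expansion, so I expect no further obstacle.
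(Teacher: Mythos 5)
Your proposal is correct and follows essentially the same route as the paper: both reduce to an asymptotic expansion of $\ell_p'(\sigma)$ in which the $1/\sigma$ terms cancel and the surviving $1/\sigma^2$ coefficient is $(\bar{x}^2-s^2)/(2\bar{x})$, whose sign is governed by whether CV exceeds $1$ (the paper packages the second-order information in Lemma~\ref{lem1}(3), $\sigma^2\left(\log(1+x/\sigma)-\frac{x}{\sigma+x}\right)\to x^2/2$, instead of expanding $\alpha(\sigma)$ itself to constant order). The one point worth keeping explicit in your write-up is that the $+1$ in $\alpha(\sigma)+1$ contributes the $\bar{x}/\sigma^2$ term at the decisive order --- your stated coefficient $\frac{1}{2\bar{x}\sigma^2}\bigl(2\bar{x}^2-\overline{x^2}\bigr)$ already accounts for it, so the argument is sound.
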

\begin{proof}
Using (\ref{7}) and (\ref{8}) in Eqn. (\ref{10}), we can write
$\ell_{p}^{'}(\sigma)$ as:
\begin{equation}
\ell_{p}^{'}(\sigma)= - \frac{1}{\sigma} \left[
\frac{\sum_{\substack{i=1}}^{n}\log\left(1+\frac{x_i}{\sigma}\right)-
\sum_{\substack{i=1}}^{n} \frac{x_i}{\sigma +
x_i}}{\sum_{\substack{i=1}}^{n}\log\left(1+\frac{x_i}{\sigma}\right)}
\right] + \frac{1}{n\sigma}\sum_{\substack{i=1}}^{n}
\frac{x_i}{\sigma + x_i} \label{11}
\end{equation}
Using the limits of Lemma (\ref{lem1}) in Eqn. (\ref{11}), we have
\begin{equation}
- \displaystyle \lim_{\sigma \to \infty} \sigma^2
\ell_{p}^{'}(\sigma)= \frac{1}{2} \times
\frac{\sum_{\substack{i=1}}^{n} x_i^2}{\sum_{\substack{i=1}}^{n}
x_i} - \bar{x}. \label{12}
\end{equation}
Finally, we note that $- \displaystyle \lim_{\sigma \to \infty}
\sigma^2 \ell_{p}^{'}(\sigma) > 0 $ when the R.H.S of Eqn.(\ref{12})
is strictly greater than 0. Alternatively, the likelihood function
is monotonic decreasing when $\frac{1}{2n}\sum_{\substack{i=1}}^{n}
x_i^2 - \bar{x}^2 > 0$, or, equivalently, CV$ > 1.$ In a similar
way, we can show that if CV$ < 1$, then the $\ell_{p}(\sigma)$ is
monotonic increasing function for sufficiently large $\sigma$.
\end{proof}
\begin{remark}
As a consequence of Theorem (\ref{theo2}), it can be immediately
concluded that $\ell_{p}(\sigma)$ tends to $\ell_{0}$ based on Lemma
(\ref{lem2}) and $\ell_{p}(\sigma)$ is a monotonic function for
sufficiently large $\sigma$. The value of CV as a measure that can
be useful to determine when $\ell_{p}(\sigma)$ will be monotonic
increasing or decreasing function for sufficiently large $\sigma$.
\end{remark}
\begin{corollary}
Given a set of samples $\{x_i\}$ of $(+)ve$ numbers with CV$ > 1$,
the profile likelihood function for the $LM(\alpha, \sigma)$
distribution has a global maximum at a finite point.\label{cor1}
\end{corollary}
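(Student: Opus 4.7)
The plan is to reduce the maximization of the profile log-likelihood $\ell_p(\sigma)$ on $(0,\infty)$ to a compact interval by pinning down both tail behaviors, and then to invoke the extreme-value theorem. Since $\ell_p$ is continuous on $(0,\infty)$, once the behavior as $\sigma\to\infty$ and as $\sigma\to 0^+$ are controlled, existence of a finite maximizer follows, and the MLE of $\alpha$ is recovered through $\hat{\alpha}=\alpha(\hat{\sigma})$ in (\ref{7}).

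At the upper end, I would combine Lemma~\ref{lem2}(3), which gives $\ell_p(\sigma)\to \ell_0=\log(1/\bar x)-1$ as $\sigma\to\infty$, with Theorem~\ref{theo2}, which under CV~$>1$ gives $\ell_p'(\sigma)<0$ on some half-line $[M,\infty)$. Together these imply that $\ell_p$ strictly decreases to $\ell_0$ on $[M,\infty)$, so $\ell_p(\sigma)>\ell_0$ throughout $[M,\infty)$ and $\sup_{\sigma\ge M}\ell_p(\sigma)=\ell_p(M)$; in particular, the supremum cannot be approached only at infinity.

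At the lower end, I would carry out a short asymptotic calculation as $\sigma\to 0^+$. Since each $x_i>0$, $\log(1+x_i/\sigma)=\log(x_i+\sigma)-\log\sigma\sim -\log\sigma$, so by (\ref{7}) $\alpha(\sigma)\sim -1/\log\sigma\to 0^+$. Substituting into (\ref{9}), the dominant $\pm\log\sigma$ contributions cancel exactly (the $-\log\sigma$ in $\ell_p$ against $-1/\alpha(\sigma)\sim\log\sigma$), leaving the subleading
\[
\ell_p(\sigma)\sim -\log|\log\sigma|-1\longrightarrow -\infty.
\]
This is the step I expect to require the most care: the two largest terms cancel, and the residual $-\log|\log\sigma|$ must be tracked accurately to confirm that $\ell_p$ really diverges to $-\infty$ rather than stabilizing at some finite value.

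Combining the two estimates, I would choose $\delta\in(0,M)$ small enough that $\ell_p(\sigma)<\ell_p(M)$ for all $\sigma\in(0,\delta]$. Then $\sup_{\sigma>0}\ell_p(\sigma)=\sup_{\sigma\in[\delta,M]}\ell_p(\sigma)$, and by continuity and compactness this supremum is attained at some finite $\hat{\sigma}\in[\delta,M]$. Setting $\hat{\alpha}=\alpha(\hat{\sigma})$ via (\ref{7}) then yields a finite global maximizer $(\hat{\alpha},\hat{\sigma})$ of the full likelihood, which is the conclusion of the corollary.
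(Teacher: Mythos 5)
Your proposal is correct and takes essentially the same route as the paper: show $\ell_p(\sigma)\to-\infty$ as $\sigma\to 0^+$, use Theorem~\ref{theo2} and Lemma~\ref{lem2} to control the behavior as $\sigma\to\infty$ under CV $>1$, and conclude by continuity. In fact your handling of the $\sigma\to 0^+$ limit is more careful than the paper's, which deduces $\ell_p\to-\infty$ directly from $\alpha(\sigma)\to 0$ without addressing the cancellation between the divergent $-\log\sigma$ and $-1/\alpha(\sigma)$ terms in (\ref{9}) that you track explicitly to isolate the residual $-\log|\log\sigma|$.
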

\begin{proof}
For small or moderate values of $\sigma$, using (\ref{7}), we have
\begin{equation}
\displaystyle \lim_{\sigma \to 0}\alpha(\sigma)= \displaystyle
\lim_{\sigma \to
0}\frac{n}{\sum_{\substack{i=1}}^{n}\log\left(1+\frac{x_i}{\sigma}\right)}=0.\label{13}
\end{equation}
Now, using (\ref{13}) in (\ref{9}) we have the following:
\begin{equation}
\displaystyle \lim_{\sigma \to 0} \ell_{p}(\sigma)= -
\infty.\label{14}
\end{equation}
Since $ \ell_{p}(\sigma)$ is a continuous and monotonic decreasing
function for sufficiently large $\sigma$ (as in Theorem \ref{theo2})
and using (\ref{14}), we can conclude that a global maximum exists
at a finite point when CV$ > 1$.
\end{proof}
\begin{remark}
Corollary \ref{cor1} shows that the likelihood function for the
Lomax distribution has a global maximum for the samples $\{x_i\}$
with CV$ > 1$ at a finite point. The calculation of CV is completely
based on available empirical data and easy to compute. The existence
of MLE based on CV for the MLM distribution will also holds as
because MLM model reduce to Lomax distribution when $ \displaystyle
\lim_{x \to \infty} m(x) = \alpha$. This can be empirically
validated in section \ref{experiments}\ref{analysis_results} and
will be useful  useful from practitioner's point of view.
\end{remark}

\subsection{MLE of parameters}
In this section, the maximum likelihood estimates are derived for
parameters $\alpha, \beta, \; \mbox{and} \; \sigma$ of MLM
distribution. Let  $x_1, x_2, ... , x_n$ be a sample of size $n$
from MLM($\alpha,\beta,\sigma$) distribution. Then the
log-likelihood function for the vector of parameters
$\Theta=(\alpha,\beta,\sigma)^{T}$ is given by
\begin{equation}
\label{eq1}
\begin{split}
\ell\equiv\ell(x;\alpha,\beta,\sigma)
&=n\log(\alpha)-\sum_{\substack{i=1}}^{n}\log\left(\sigma+x_i\right)+\sum_{\substack{i=1}}^{n}\log\left[\beta+1+\log\left(1+\frac{x_i}{\sigma}\right)\right]\\
&+\beta\sum_{\substack{i=1}}^{n}\log\left[\log\left(1+\frac{x_i}{\sigma}\right)\right]-(\beta+1)\sum_{\substack{i=1}}^{n}\log\left[1+\log\left(1+\frac{x_i}{\sigma}\right)\right] \\
& -\alpha\sum_{\substack{i=1}}^{n}
\displaystyle\frac{[\log\left(1+\frac{x_i}{\sigma}\right)]^{\beta+1}}{[1+\log\left(1+\frac{x_i}{\sigma}\right)]^\beta},\\
\end{split}
\end{equation}

The maximum likelihood estimate for the parameters $\alpha$,$\beta$,
and $\sigma$ are given by $\hat{\alpha}$,$\hat{\beta}$, and
$\hat{\sigma}$, are obtained by maximizing the likelihood function
in Equation (\ref{eq1}). The first-order partial derivatives of (1)
with respect to $\alpha$, $\beta$, and $\sigma$ are

\begin{equation}
\label{eq2} \frac{\partial\ell}{\partial\alpha}=\frac{n}{\alpha} -
\sum_{i=1}^{n}\frac{\left[\log\left(1+\frac{x_i}{\sigma}\right)\right]^{\beta+1}}{\left[1+
\log\left(1+\frac{x_i}{\sigma}\right) \right]^{\beta}}
\end{equation}

\begin{equation}
\label{eq3}
\frac{\partial\ell}{\partial\beta}=\sum_{i=1}^{n}\frac{1}{\left(1+\beta+w_i
\right)} + \sum_{i=1}^{n}\log\left( \frac{w_i}{1+w_i} \right) \times
\left[ 1-\frac{\alpha w_i^{\beta+1}}{\left( 1+w_i \right)^{\beta}}
\right]
\end{equation}

\begin{equation}
\label{eq4} \frac{\partial\ell}{\partial\sigma}
=-\sum_{i=1}^{n}\frac{1}{\left( \sigma + x_i \right)} +
\sum_{i=1}^{n} \frac{x_i}{\sigma (\sigma + x_i)} \left[ \frac{\beta
+1}{(1+w_i)} - \frac{\beta}{w_i} - \frac{1}{(1+\beta +w_i)} \right]
+ \alpha \sum_{i=1}^{n}\frac{x_i}{\sigma(\sigma + x_i)} \left
[\frac{(1+\beta + w_i)w_i^{\beta}}{(1+w_i)^{\beta+1}} \right ],
\end{equation}

where $w_i=\log\left( 1 + \frac{x_i}{\sigma} \right )$.

The MLEs of the three parameters of the MLM($\alpha,\beta,\sigma$)
distributions are obtained by setting these above equations to zero
and solving them simultaneously. Closed forms of the solutions are
not available for the equations (\ref{eq2}), (\ref{eq3}) and
(\ref{eq4}). So, iterative methods will be applied to solve these
equations numerically.

\subsection{Asymptotic distribution}

Fisher information matrix, a measure of the information content of
the data relative to the parameters to be estimated, plays an
important role in parameter estimation. The Fisher information
matrix $(F)$ can be obtained by taking the expected values of the
second-order and mixed partial derivatives of
$\ell(\alpha,\beta,\sigma)$ w.r.t. $\alpha$, $\beta$, and $\sigma$.
Since, the analytical expression is hard to compute. Thus, it can be
approximated by numerically investing the the $F=(F_{ij})$ matrix.
The asymptotic $F$ matrix can be given as follows:
\[
F=\begin{bmatrix}
-\frac{\partial^{2}\ell}{\partial\alpha^{2}} & -\frac{\partial^{2}\ell}{\partial\alpha \partial\beta} & -\frac{\partial^{2}\ell}{\partial\alpha \partial \sigma} \\
-\frac{\partial^{2}\ell}{\partial\alpha \partial \beta} & -\frac{\partial^{2}\ell}{\partial\beta^{2}} & -\frac{\partial^{2}\ell}{\partial\beta \partial \sigma} \\
-\frac{\partial^{2}\ell}{\partial\alpha \partial \sigma} & -\frac{\partial^{2}\ell}{\partial\beta \partial \sigma} & -\frac{\partial^{2}\ell}{\partial \sigma^{2}} \\
\end{bmatrix}
\]
The second and mixed partial derivatives of the log likelihood
function are obtained as follows:
\begin{equation}
\label{eq5} \frac{\partial^{2}\ell}{\partial\alpha^{2}} =
-\frac{n}{\alpha^2}
\end{equation}

\begin{equation}
\label{eq6} \frac{\partial^{2}\ell}{\partial\alpha
\partial\beta}=\sum_{i=1}^{n}\log \left( \frac{1+w_i}{w_i} \right)
\times \left [ \frac{w_i^{\beta+1}}{(1+w_i)^{\beta}} \right ]
\end{equation}

\begin{equation}
\label{eq7} \frac{\partial^{2}\ell}{\partial\alpha \partial
\sigma}=\sum_{i=1}^{n} \frac{x_i}{\sigma (\sigma +x_i)} \times
\left[ \frac{w_i^{\beta}(1+\beta + w_i)}{(1+w_i)^{\beta+1}} \right ]
\end{equation}

\begin{equation}
\label{eq8}
\frac{\partial^{2}\ell}{\partial\beta^{2}}=-\sum_{i=1}^{n}\frac{1}{(1+\beta+w_i)^{2}}-
\alpha \sum_{i=1}^{n} \log^2 \left( \frac{w_i}{1+w_i} \right ) \left
[ \frac{w_i^{\beta+1}}{(1+w_i)^{\beta}} \right ]
\end{equation}

\begin{equation}
\label{eq9}
\begin{split}
\frac{\partial^{2}\ell}{\partial\beta \partial \sigma}
& = \sum_{i=1}^{n} \frac{x_i}{\sigma (\sigma +x_i) (1+\beta + w_i)^{2}} -\sum_{i=1}^{n} \frac{x_i}{\sigma (\sigma +x_i)} \times \left [ \frac{(1+w_i)^{\beta} - \alpha w_i^{\beta+1}}{w_i (1+w_i)^{\beta+1}} \right ] \\
& + \alpha \sum_{i=1}^{n} \frac{x_i}{\sigma (\sigma + x_i)} \left [
\frac{(1+\beta +w_i)w_i^{\beta}}{(1+w_i)^{\beta+1}} \right ] \left [
\log \left ( \frac{w_i}{1+w_i} \right ) \right ]
\end{split}
\end{equation}

\begin{equation}
\label{eq10}
\begin{split}
\frac{\partial^{2}\ell}{\partial \sigma^{2}}
& = \sum_{i=1}^{n}\frac{1}{(\sigma + x_i)^{2}} + \sum_{i=1}^{n} \frac{x_i^{2}}{\sigma^{2}(\sigma + x_i)^2} \left [ \frac{\beta+1}{(1+w_i)^2} -\frac{\beta}{w_i^{2}} -\frac{1}{(1+\beta + w_i)^2} \right ] \\
& + \sum_{i=1}^{n} \frac{x_i(2\sigma + x_i)}{\sigma^{2} (\sigma +x_i)^2} \left [ \frac{\beta}{w_i} + \frac{1}{(1+\beta + w_i)} - \frac{\beta +1}{(1+w_i)} \right ] -\alpha  \sum_{i=1}^{n} \frac{x_i^{2}(1+\beta)}{\sigma^{2}(x_i+\sigma)^2} \left [  \frac{w_i^{\beta -1}(\beta + w_i)}{(1+w_i)^{\beta+1}} \right ] \\
& -\alpha \sum_{i=1}^{n}\frac{x_i}{\sigma^{2}(\sigma +x_i)^2} \left
[ \frac{w_i^{\beta} (1+\beta + w_i) \left [ (2\sigma +x_i) (1+w_i) -
x_i(\beta +1) \right ]}{(1+w_i)^{\beta +2}} \right ]
\end{split}
\end{equation}

The variance-covariance matrix is approximated by $M = (M_{ij})$
where $M_{ij}=F_{ij}^{-1}$. The asymptotic distribution of MLEs for
$\alpha$, $\beta$, and $\sigma$ can be written as
\[
\left [  (\hat{\alpha}-\alpha), (\hat{\beta}-\beta), (\hat{\sigma} -
\sigma) \right ] \sim N_3 (0,F^{-1}(\hat{\theta}))
\]
Then the approximate $100(1-k)\%$ confidence intervals for $\alpha$,
$\beta$, and $\sigma$ are given by $\hat{\alpha} \pm
\mathscr{Z}_{\frac{k}{2}} \sqrt{Var(\hat{\alpha})}$, $\hat{\beta}
\pm \mathscr{Z}_{\frac{k}{2}} \sqrt{Var(\hat{\beta})}$, and
$\hat{\sigma} \pm \mathscr{Z}_{\frac{k}{2}}
\sqrt{Var(\hat{\sigma})}$, where
$\hat{\Theta}=(\hat{\alpha},\hat{\beta},\hat{\sigma})$ and
$\mathscr{Z}_k$ is the upper 100 k-th percentile of the standard
normal distribution.

\subsection{Extreme value properties}

Here we study some of the interesting extreme value theoretic
properties. The concept of regular variation is an important notion
of extreme value theory. Below we show the extreme value results for the MLM distribution that can characterize the asymptotic behavior of extremes along with well grounded statistical theory.

\begin{definition}{(Maximum domain of attraction)}
A function $F$ is said to be regularly varying at infinity, if for
every $t>0,$
$$ \displaystyle \lim_{x \to \infty}
\frac{1-F(tx)}{1-F(x)}=t^{-\alpha}; \; \alpha>0.$$ Then we say that
$F$ is a function with regularly varying tails with $\alpha >0$ as
the tail index and $F$ belongs to the maximum domain of attraction
(MDA) of the Frechet distribution with index $\alpha$.
\end{definition}
\begin{theorem}
The CDF (Eqn. \ref{gplcdf}) of the $MLM$ distribution is a function
with regularly varying tails and it belongs to MDA of the Frechet
distribution with index $\alpha$.
\end{theorem}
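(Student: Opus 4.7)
The plan is to compute the ratio $(1-F(tx))/(1-F(x))$ directly from the exponential form of the MLM survival function and show that the limit equals $t^{-\alpha}$ for every $t>0$; the MDA conclusion then follows from the standard criterion that a CDF with survival function regularly varying at infinity with index $-\alpha$ lies in the maximum domain of attraction of the Fr\'echet distribution with tail index $\alpha$.

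First I would write
\[
\frac{1-F(tx)}{1-F(x)} = \exp\!\left[-\alpha\bigl(g(v) - g(u)\bigr)\right],
\]
where I set
\[
u = \log(1+x/\sigma),\qquad v = \log(1+tx/\sigma),\qquad g(y) := \frac{y^{\beta+1}}{(1+y)^\beta}.
\]
This reduces the whole problem to understanding the asymptotic behavior of $g(v)-g(u)$ as $x\to\infty$. Two observations drive the rest of the argument. The first is that
\[
v - u = \log\!\left(\frac{1+tx/\sigma}{1+x/\sigma}\right) \longrightarrow \log t \quad \text{as } x\to\infty,
\]
and in particular both $u$ and $v$ tend to infinity. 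The second is a one-line expansion: writing $g(y) = y\bigl(1+1/y\bigr)^{-\beta}$ and using the binomial expansion gives
\[
g(y) = y - \beta + O(1/y) \qquad \text{as } y\to\infty.
\]

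Combining these, $g(v) - g(u) = (v-u) + O(1/u) + O(1/v) \to \log t$, so
\[
\lim_{x\to\infty}\frac{1-F(tx)}{1-F(x)} = \exp[-\alpha\log t] = t^{-\alpha}.
\]
By the definition of regular variation given just above the theorem, the survival function $1-F$ is regularly varying at infinity with index $-\alpha$, so $F$ has regularly varying tails with tail index $\alpha$ and therefore lies in the MDA of the Fr\'echet distribution with index $\alpha$, as claimed.

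The only delicate point is controlling the error terms in the expansion of $g$ uniformly enough that the $O(1/u)$ and $O(1/v)$ remainders truly vanish; this is purely a Taylor-expansion bookkeeping issue and not a genuine obstacle, since $u,v \to \infty$ at comparable rates (both behave like $\log x$) and $\beta$ is a fixed constant with $\beta > -1$. No other step requires more than elementary limits, so I expect the proof to be short, with the expansion $g(y) = y - \beta + O(1/y)$ being the one computation worth isolating explicitly.
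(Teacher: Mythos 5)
Your proposal is correct and follows essentially the same route as the paper: both reduce the exponent of the survival function to $\alpha\left[\log(1+tx/\sigma)-\beta+O\!\left(1/\log(1+tx/\sigma)\right)\right]$ (your expansion $g(y)=y(1+1/y)^{-\beta}=y-\beta+O(1/y)$ is exactly the paper's expansion of $\bigl(\log/(1+\log)\bigr)^{\beta}=1-\beta/\log+O(1/\log^{2})$ after multiplying through by the logarithm), and then cancel in the ratio to obtain $t^{-\alpha}$. Your bookkeeping via the single function $g$ is slightly tidier, but there is no substantive difference in the argument.
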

\begin{proof}
\begin{equation}
1 - F(tx) = \exp \Bigg[ - \alpha . \frac{\log ^ {\beta +1} \big( 1 +
\frac{tx}{\sigma} \big)}{\bigg( 1+\log \big( 1 + \frac{tx}{\sigma}
\big) \bigg)^{\beta}} \Bigg] ; \; t>0 \label{F(tx)}
\end{equation}
Now, we have (using expansions of $\log(1-x)$ and $\exp(x)$):
\begin{align}
\Bigg( \frac{\log \big( 1 + \frac{tx}{\sigma} \big)}{1+\log \big( 1
+ \frac{x}{\sigma} \big)} \Bigg)^{\beta} & = \Bigg( 1
-\frac{1}{1+\log \big( 1 + \frac{tx}{\sigma} \big)} \Bigg)^{\beta}
\nonumber\\
& = \exp \Bigg[ \beta \log\Bigg( 1 -\frac{1}{1+\log \big( 1 +
\frac{tx}{\sigma} \big)} \Bigg) \Bigg] \nonumber\\
& = \exp \Bigg[ \beta \Bigg( - \frac{1}{\log \big( 1 +
\frac{tx}{\sigma} \big)} + O \bigg( \frac{1}{\log^2 \big( 1 +
\frac{tx}{\sigma} \big)} \bigg) \Bigg) \Bigg] \nonumber\\
& = 1 - \frac{\beta}{\log \big( 1 + \frac{tx}{\sigma} \big)} + O
\bigg( \frac{\beta}{\log^2 \big( 1 + \frac{tx}{\sigma} \big)} \bigg)
\label{expansion}
\end{align}
Using Eqn. (\ref{F(tx)}) and (\ref{expansion}) together, we get
\begin{equation}
1 - F(tx) = \exp \Bigg[ -\alpha \log \bigg( 1 + \frac{tx}{\sigma}
\bigg) \Bigg\{ 1 - \frac{\beta}{\log \big( 1 + \frac{tx}{\sigma}
\big)} + O \bigg( \frac{\beta}{\log^2 \big( 1 + \frac{tx}{\sigma}
\big)} \bigg) \Bigg\} \Bigg] \label{Ftx}
\end{equation}
Similarly for $t=1$, Eqn. (\ref{Ftx}) becomes
\begin{equation}
1 - F(x) = \exp \Bigg[ -\alpha \log \bigg( 1 + \frac{x}{\sigma}
\bigg) \Bigg\{ 1 - \frac{\beta}{\log \big( 1 + \frac{x}{\sigma}
\big)} + O \bigg( \frac{\beta}{\log^2 \big( 1 + \frac{x}{\sigma}
\big)} \bigg) \Bigg\} \Bigg] \label{Fx}
\end{equation}

Now,
\begin{equation*}
\begin{split}
\displaystyle \lim_{x \to \infty} \frac{1-F(tx)}{1-F(x)} & =
\displaystyle \lim_{x \to \infty} \exp\Bigg[ - \alpha \log \Bigg(
\frac{1 + \frac{tx}{\sigma}}{1 + \frac{x}{\sigma}} \Bigg) + O\Bigg(
\frac{1}{\log^2 \big( 1 + \frac{tx}{\sigma} \big)} + \frac{1}{\log^2
\big( 1 + \frac{x}{\sigma} \big)} \Bigg) \Bigg] \\
& = \exp\big(-\alpha\log t \big) \\
& = t^{-\alpha}.
\end{split}
\end{equation*}
Thus, $F \in  MDA(\Phi_\alpha)$.
\end{proof}
Now we study the tail-equivalent and heavy-tailed behaviour of the
proposed MLM distribution as follows:

\begin{definition}{(Tail-equivalent)}
Two distributions $F$ and $G$ are said to be tail-equivalent if
$$ \displaystyle \lim_{x \to \infty} \frac{1-F(x)}{1-G(x)}=c ; \;
0<c<\infty.$$

\end{definition}
\begin{theorem}
The $MLM (\alpha, \beta, \sigma)$ distribution, defined in Eqn.
(\ref{gplcdf}), is right tail-equivalent to the power-law
distribution.
\end{theorem}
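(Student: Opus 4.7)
The plan is to leverage the asymptotic expansion of $1-F(x)$ that was already derived in the proof of the preceding theorem, namely
\[
1 - F(x) = \exp\!\Bigg[ -\alpha \log\!\Big(1+\tfrac{x}{\sigma}\Big)\Bigg\{1 - \frac{\beta}{\log\!\big(1+\frac{x}{\sigma}\big)} + O\!\Big(\tfrac{\beta}{\log^2(1+x/\sigma)}\Big)\Bigg\}\Bigg],
\]
and to massage this into a form $c\,x^{-\alpha}(1+o(1))$ that is manifestly comparable to the Pareto survival function. First I would distribute the outer $-\alpha\log(1+x/\sigma)$ factor across the braces, which produces three terms: the principal term $-\alpha\log(1+x/\sigma)$, the $O(1)$ term $+\alpha\beta$, and a vanishing remainder $O(1/\log(1+x/\sigma))$. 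Exponentiating turns these respectively into the factor $(1+x/\sigma)^{-\alpha}$, the constant $e^{\alpha\beta}$, and a factor that tends to $1$ as $x\to\infty$.

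Next I would choose the comparison distribution to be the classical power-law survival $1-G(x)=x^{-\alpha}$ for $x\geq 1$ (any positive scale will do, since tail-equivalence is only defined up to a finite positive constant). Then the ratio becomes
\[
\frac{1-F(x)}{1-G(x)} = e^{\alpha\beta}\,\frac{(1+x/\sigma)^{-\alpha}}{x^{-\alpha}}\,\exp\!\Big[O\big(1/\log(1+x/\sigma)\big)\Big].
\]
A direct calculation gives $(1+x/\sigma)^{-\alpha}/x^{-\alpha} = \sigma^{\alpha}\bigl(1+\sigma/x\bigr)^{-\alpha} \to \sigma^{\alpha}$, and the exponential of the error tends to $e^{0}=1$. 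Hence the limit equals $\sigma^{\alpha} e^{\alpha\beta}$, which is a finite, strictly positive constant, so the definition of tail-equivalence is satisfied.

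The only delicate point is tracking the big-$O$ remainder carefully: after multiplying by $-\alpha\log(1+x/\sigma)$ the cubic-in-$1/\log$ remainder inside the braces becomes a linear-in-$1/\log$ remainder in the exponent, which must still vanish. I would verify this by noting that $\log(1+x/\sigma)\cdot O(1/\log^2(1+x/\sigma)) = O(1/\log(1+x/\sigma)) \to 0$ as $x\to\infty$, and the continuity of $\exp$ then gives that the residual factor converges to $1$. With that verified, the computation above is straightforward and no further obstacle arises; the rest is routine arithmetic.
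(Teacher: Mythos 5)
Your proposal is correct and follows essentially the same route as the paper: both start from the asymptotic expansion of $1-F(x)$ obtained in the preceding theorem, extract the factor $e^{\alpha\beta}$ together with a vanishing $O(1/\log(1+x/\sigma))$ remainder, and conclude that the ratio of survival functions tends to a finite positive constant. The only difference is cosmetic: the paper compares against $1-G(x)=(1+x/\sigma)^{-\alpha}$ and gets the constant $e^{\alpha\beta}$, while you compare against $x^{-\alpha}$ and pick up the extra harmless factor $\sigma^{\alpha}$.
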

\begin{proof}
Let $G(x)$ be the CDF of the power-law distribution, i.e., $$ 1-
G(x) = \bigg( 1 + \frac{x}{\sigma} \bigg)^{-\alpha} $$ and $F(x)$ is
the CDF of MLM distribution as given in Eqn.(\ref{gplcdf}). Then,
\begin{equation*}
\begin{split}
\displaystyle \lim_{x \to \infty} \frac{1-F(x)}{1-G(x)} & =
\displaystyle \lim_{x \to \infty} \frac{\exp\Bigg[ -\alpha \log
\big( 1 + \frac{x}{\sigma} \big) + \alpha\beta + O\Bigg(
\frac{1}{\log \big( 1 + \frac{x}{\sigma} \big)} \Bigg) \Bigg]}
                                           {\exp\Bigg[ -\alpha \log \big( 1 + \frac{x}{\sigma} \big) \Bigg]} \; \mbox{(Using Eqn. (\ref{Fx}))} \\
& = \displaystyle \lim_{x \to \infty} \exp\Bigg[ \alpha\beta +
O\Bigg(\frac{1}{\log \big( 1 + \frac{x}{\sigma} \big)} \Bigg) \Bigg] \\
& = \exp\big(\alpha\beta \big) \\
& = c < \infty.
\end{split}
\end{equation*}
\end{proof}
\begin{definition}{(Heavy-tailed distribution)}
A distribution function $F$ is heavy-tailed if
$$ \displaystyle \lim_{x \to \infty} \exp\{\lambda x\}\big(1-F(x)\big)
= \infty, \; \mbox{for any} \; \lambda > 0.$$
\end{definition}
\begin{theorem}
The $MLM (\alpha, \beta, \sigma)$ distributions, defined in Eqn.
(\ref{gplcdf}), are heavy-tailed distributions.
\end{theorem}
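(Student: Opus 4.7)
The plan is to exploit the tail-equivalence result proved immediately above: since the preceding theorem established that the survival function $1-F(x)$ of the MLM distribution is asymptotically a constant multiple of the power-law survival $1-G(x)=(1+x/\sigma)^{-\alpha}$, heavy-tailedness of MLM reduces to heavy-tailedness of the power-law, which is elementary.

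Concretely, I would proceed as follows. First I would write
\[
e^{\lambda x}\bigl(1-F(x)\bigr)=e^{\lambda x}\bigl(1-G(x)\bigr)\cdot\frac{1-F(x)}{1-G(x)},
\]
so that the MLM factor on the right has limit $c=e^{\alpha\beta}\in(0,\infty)$ by the tail-equivalence theorem just proved. It therefore suffices to show the first factor tends to infinity, i.e., that the power-law itself is heavy-tailed. Using the explicit form $1-G(x)=(1+x/\sigma)^{-\alpha}$, one has
\[
e^{\lambda x}\bigl(1-G(x)\bigr)=\exp\!\bigl[\lambda x-\alpha\log(1+x/\sigma)\bigr],
\]
and since $\lambda x$ grows linearly in $x$ while $\alpha\log(1+x/\sigma)$ grows only logarithmically, the exponent diverges to $+\infty$ for every $\lambda>0$. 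Multiplying by the finite positive constant $c$ then gives $\lim_{x\to\infty}e^{\lambda x}(1-F(x))=\infty$, which is the required conclusion.

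As an alternative, entirely self-contained route, one can bypass the tail-equivalence lemma and work directly from the asymptotic expansion
\[
1-F(x)=\exp\!\Bigl[-\alpha\log(1+x/\sigma)\bigl\{1-\tfrac{\beta}{\log(1+x/\sigma)}+O\bigl(\tfrac{1}{\log^{2}(1+x/\sigma)}\bigr)\bigr\}\Bigr]
\]
already derived in the proof of Theorem (the MDA result) as Eqn.~(\ref{Fx}). Plugging this into $e^{\lambda x}(1-F(x))$ gives an exponent of the form $\lambda x-\alpha\log(1+x/\sigma)+\alpha\beta+o(1)$, and again the linear-over-logarithmic dominance forces the expression to $+\infty$.

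I do not anticipate any real obstacle: the linear-beats-logarithmic comparison is the entire content, and the tail-equivalence theorem reduces the argument to an almost one-line computation. The only point requiring a small amount of care is checking that the $o(1)$ (or equivalently the $O(1/\log(1+x/\sigma))$) remainder in the exponent does not spoil divergence, which is immediate since this remainder stays bounded while $\lambda x-\alpha\log(1+x/\sigma)\to\infty$.
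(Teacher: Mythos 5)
Your proposal is correct and matches the paper's argument: the paper proves this theorem exactly as in your ``alternative'' route, substituting the expansion of $1-F(x)$ from Eqn.~(\ref{Fx}) to get the exponent $\lambda x-\alpha\log\left(1+\frac{x}{\sigma}\right)+\alpha\beta+O\!\left(\frac{1}{\log\left(1+\frac{x}{\sigma}\right)}\right)$ and concluding divergence from the linear-versus-logarithmic comparison. Your primary route, which factors $e^{\lambda x}\left(1-F(x)\right)$ through the tail-equivalence theorem, is only a cosmetic repackaging of the same computation (that theorem is itself proved from the same expansion), so there is nothing substantively different to compare.
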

\begin{proof}
\begin{equation*}
\begin{split}
\displaystyle \lim_{x \to \infty} \exp\{\lambda x\}\big(1-F(x)\big)
& = \lim_{x \to \infty} {\exp\Bigg[ \lambda x -\alpha \log \bigg( 1
+ \frac{x}{\sigma} \bigg) + \alpha\beta + O\Bigg( \frac{1}{\log
\big( 1
+ \frac{x}{\sigma} \big)} \Bigg) \Bigg]} \\
& = \infty ,
\end{split}
\end{equation*}
since $\log \big( 1 + \frac{x}{\sigma} \big) \asymp x^{\epsilon}$
for any $\epsilon > 0$ and for sufficiently large $x$.
\end{proof}

There are two other important class of distributions
\cite{embrechts2013modelling} viz. the class $\mathbb{D}$ of
dominated-variation distributions and and the class $\mathbb{L}$ of
long-tailed distributions that are used in the risk theory and
queueing theory. The proposed MLM distributions also follows these
two properties.

\begin{definition}
A distribution $F$ belong to the class $\mathbb{D}$ of
dominated-variation distributions if $$ \displaystyle \limsup
\limits_{x \to \infty} \frac{1-F(x)}{1-F(2x)}< \infty.
$$
\end{definition}
\begin{theorem}\label{classD}
If $\alpha > 0$, then $MLM (\alpha, \beta, \sigma)$ distribution,
defined in Eqn. (\ref{gplcdf}), belongs to the class $\mathbb{D}$ of
dominated-variation distributions.
\end{theorem}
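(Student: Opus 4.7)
The plan is to deduce class $\mathbb{D}$ membership directly from the regular variation result already established for the MLM tail. The previous theorem shows $F \in \mathrm{MDA}(\Phi_\alpha)$, which by definition is exactly the statement
\[
\lim_{x \to \infty} \frac{1-F(tx)}{1-F(x)} = t^{-\alpha}, \quad t > 0.
\]
Specializing to $t = 2$ and taking reciprocals yields $\lim_{x \to \infty}\frac{1-F(x)}{1-F(2x)} = 2^{\alpha}$. Since $\alpha > 0$ this limit is finite, so a fortiori the $\limsup$ is finite, which is precisely the defining condition for $F \in \mathbb{D}$.

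If a self-contained verification is preferred (not relying on the MDA theorem), the backup plan is to re-use the asymptotic expansion
\[
1 - F(x) = \exp\Bigl[-\alpha \log\bigl(1 + \tfrac{x}{\sigma}\bigr)\bigl\{1 - \tfrac{\beta}{\log(1+x/\sigma)} + O\bigl(\tfrac{1}{\log^{2}(1+x/\sigma)}\bigr)\bigr\}\Bigr]
\]
derived in equation (\ref{Fx}), write down the analogous expansion for $1-F(2x)$, and form the log of the ratio. The leading contribution is $\alpha\bigl[\log(1+2x/\sigma) - \log(1+x/\sigma)\bigr] \to \alpha \log 2$. The $\beta$-correction terms cancel exactly in the limit because $\log(1+kx/\sigma)\cdot \frac{\beta}{\log(1+kx/\sigma)} = \beta$ regardless of the constant $k \in \{1,2\}$, and the remaining $O(1/\log(1+x/\sigma))$ contributions vanish as $x \to \infty$. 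This confirms the finite limit $2^{\alpha}$ independently.

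There is essentially no obstacle: the hard work is already done in the regular variation proof, and dominated variation is a strictly weaker property than regular variation (every regularly varying distribution of positive index is automatically in $\mathbb{D}$). So the proof can be kept to a few lines by invoking the previous theorem, with the explicit expansion offered as an optional transparent check.
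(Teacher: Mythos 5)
Your proposal is correct, and in fact you offer two valid routes. Your primary route --- specializing the already-proved regular-variation limit $\lim_{x\to\infty}\frac{1-F(tx)}{1-F(x)}=t^{-\alpha}$ to $t=2$ and taking reciprocals to get $2^{\alpha}<\infty$ --- is a legitimate one-line deduction, since regular variation with positive index $\alpha$ immediately implies dominated variation. The paper does not take this shortcut: it instead re-derives the limit $\lim_{x\to\infty}\frac{1-F(x)}{1-F(2x)}=2^{\alpha}$ from scratch by substituting the asymptotic expansion of $1-F(x)$ (Eqn.~(\ref{Fx})) into the ratio, forming $\exp\bigl[\alpha\log\bigl(\frac{1+2x/\sigma}{1+x/\sigma}\bigr)+O(\cdot)\bigr]$, and letting $x\to\infty$ --- which is exactly your backup plan, including the observation that the constant $\alpha\beta$ correction terms cancel between numerator and denominator. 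Your primary route buys brevity and makes the logical dependence on the earlier theorem explicit; the paper's (and your backup) computation buys a self-contained verification that does not lean on the maximum-domain-of-attraction result. Either is acceptable, and both land on the same finite limit $2^{\alpha}$.
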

\begin{proof}
\begin{equation*}
\begin{split}
\displaystyle \lim_{x \to \infty} \frac{1-F(x)}{1-F(2x)} & =
\displaystyle \lim_{x \to \infty} \frac{\exp\Bigg[ -\alpha \log
\big( 1 + \frac{x}{\sigma} \big) + \alpha\beta + O\Bigg(
\frac{1}{\log \big( 1 + \frac{x}{\sigma} \big)} \Bigg) \Bigg]}
                                           {\exp\Bigg[ -\alpha \log \big( 1 + \frac{2x}{\sigma} \big) + \alpha\beta + O\Bigg( \frac{1}{\log \big( 1 + \frac{2x}{\sigma} \big)} \Bigg) \Bigg]} \\
& = \displaystyle \lim_{x \to \infty} \exp\Bigg[ \alpha \log \Bigg(
\frac{1 + \frac{2x}{\sigma}}{1 + \frac{x}{\sigma}} \Bigg) + O\Bigg(
\frac{1}{\log \big( 1 + \frac{x}{\sigma} \big)} + \frac{1}{\log
\big( 1 + \frac{2x}{\sigma} \big)} \Bigg) \Bigg] \\
& =  \exp\big(\alpha\log2 \big) \\
& = 2^{\alpha} < \infty.
\end{split}
\end{equation*}
where $\alpha > 0$.
\end{proof}

\begin{definition}
A distribution $F$ is said to belong to the class $\mathbb{L}$ of
long-tailed distributions if $F$ has right unbounded support and,
for any fixed $y > 0$,
$$ \displaystyle \lim_{x \to \infty}
\frac{1-F(x+y)}{1-F(x)}=1.$$
\end{definition}
\begin{theorem}\label{classL}
The $MLM (\alpha, \beta, \sigma)$ distribution, defined in Eqn.
(\ref{gplcdf}), belongs to the class $\mathbb{L}$ of long-tailed
distributions.
\end{theorem}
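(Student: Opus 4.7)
The plan is to reuse the asymptotic expansion of $1-F(x)$ that was derived in the proof that MLM belongs to $MDA(\Phi_\alpha)$, namely equation (\ref{Fx}):
\[
1-F(x) = \exp\Bigl[-\alpha\log\bigl(1+\tfrac{x}{\sigma}\bigr) + \alpha\beta + O\bigl(1/\log(1+x/\sigma)\bigr)\Bigr],
\]
after expanding the product inside the exponential. I would write the analogous expansion for $1-F(x+y)$ (with $x$ replaced by $x+y$), form the ratio, and observe three facts in turn.

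First, the two additive constants $\alpha\beta$ cancel exactly between numerator and denominator. Second, the dominant difference
\[
-\alpha\bigl[\log\bigl(1+\tfrac{x+y}{\sigma}\bigr) - \log\bigl(1+\tfrac{x}{\sigma}\bigr)\bigr]
= -\alpha\log\!\left(1 + \frac{y/\sigma}{1+x/\sigma}\right)
\]
tends to $0$ as $x\to\infty$ because $y$ is fixed while $1+x/\sigma\to\infty$. Third, both $O\bigl(1/\log(1+x/\sigma)\bigr)$ and $O\bigl(1/\log(1+(x+y)/\sigma)\bigr)$ error terms vanish in the same limit. Combining these three,
\[
\log\frac{1-F(x+y)}{1-F(x)} \;\longrightarrow\; 0,
\]
so the ratio tends to $1$, as required by the definition of the class $\mathbb{L}$.

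There is essentially no obstacle here beyond keeping the asymptotic bookkeeping clean: since the heavy lifting (the uniform expansion of $1-F$ in terms of $\log(1+x/\sigma)$) was already done in the MDA proof, the argument reduces to showing that a shift by the fixed quantity $y$ is negligible compared to the diverging scale $\log(1+x/\sigma)$. As an alternative one could simply invoke regular variation: the earlier theorem shows $1-F$ is regularly varying with index $-\alpha$, and any regularly varying function is automatically long-tailed because $\bar F(x+y)/\bar F(x) = \bar F\bigl(x(1+y/x)\bigr)/\bar F(x) \to 1^{-\alpha}=1$ by the uniform convergence of $\bar F(xt)/\bar F(x)$ to $t^{-\alpha}$ on compact $t$-intervals bounded away from $0$. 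Either route yields the conclusion, but the direct expansion keeps the proof self-contained and parallel in style to the earlier tail-equivalence and class-$\mathbb{D}$ proofs.
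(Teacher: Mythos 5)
Your proposal is correct and follows essentially the same route as the paper: both substitute the expansion of $1-F$ from Eqn.~(\ref{Fx}) into the ratio $\frac{1-F(x+y)}{1-F(x)}$, observe that the $\alpha\beta$ terms cancel, that the leading difference reduces to $-\alpha\log\bigl(1+\frac{y/\sigma}{1+x/\sigma}\bigr)\to 0$, and that the $O\bigl(1/\log(1+x/\sigma)\bigr)$ remainders vanish. Your remark that long-tailedness also follows directly from the regular-variation result is a valid (and arguably cleaner) shortcut, but the main argument you give is the paper's own.
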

\begin{proof}
\begin{equation*}
\begin{split}
\displaystyle \lim_{x \to \infty} \frac{1-F(x+y)}{1-F(x)} & =
\displaystyle \lim_{x \to \infty} \exp\Bigg[- \alpha \log \Bigg( 1+
\frac{(y/\sigma)}{\big(1 + \frac{x}{\sigma}\big)} \Bigg) + O\Bigg(
\frac{1}{\log \big( 1 + \frac{x}{\sigma} \big)} + \frac{1}{\log
\big( 1 + \frac{x+y}{\sigma} \big)} \Bigg) \Bigg] \\
& = 1, \quad \text{since } \; \sigma > 0.
\end{split}
\end{equation*}
\end{proof}
We have shown that the proposed MLM distributions are heavy-tailed
and also possess the additional regularity property of
subexponentiality \cite{foss2011introduction} as given below.
Essentially this corresponds to good tail behaviour under the
operation of convolution.
\begin{definition}{(Subexponential distribution)}
We say that a distribution $F$ is subexponential if
$$ \displaystyle \lim_{x \to \infty} \frac{1-F\ast F(x)}{1-F(x)}=2,$$
where $\ast$ denotes the convolution operation.
\end{definition}
\begin{theorem}
The $MLM (\alpha, \beta, \sigma)$ distribution, defined in Eqn.
(\ref{gplcdf}), is subexponential.
\end{theorem}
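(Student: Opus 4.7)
The plan is to avoid a direct analysis of the convolution $F \ast F$ and instead invoke a well-known structural result from extreme value theory: the class inclusion $\mathbb{D} \cap \mathbb{L} \subseteq \mathcal{S}$, where $\mathcal{S}$ denotes the family of subexponential distributions (see Embrechts, Kl\"uppelberg and Mikosch, already cited in the paper). Since the two prerequisite memberships have been verified earlier in this section, the proof amounts to assembling the appropriate references.

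Concretely, I would proceed in three short steps. First, cite Theorem \ref{classD}, which proves that the MLM distribution lies in the class $\mathbb{D}$ of dominated-variation distributions for every $\alpha > 0$. Second, cite Theorem \ref{classL}, which shows that the MLM distribution belongs to the class $\mathbb{L}$ of long-tailed distributions. Third, apply the inclusion $\mathbb{D} \cap \mathbb{L} \subseteq \mathcal{S}$ to conclude
\[
\lim_{x \to \infty} \frac{1 - F \ast F(x)}{1 - F(x)} = 2,
\]
which is exactly the subexponentiality condition. An equivalent one-line route would be to observe that Eqn. (\ref{Fx}) shows $\bar{F}(x) = 1 - F(x)$ is regularly varying with index $-\alpha$ (as already used to prove membership in $\mathrm{MDA}(\Phi_\alpha)$), and then invoke the classical fact that every regularly varying distribution with positive tail index is subexponential.

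The main obstacle, if one insisted on a direct argument, would lie in controlling
\[
1 - F \ast F(x) = 2\,[1 - F(x)] - \int_0^x [1 - F(x - y)]\,dF(y),
\]
which requires splitting the convolution integral over the regions $[0, x/2]$ and $[x/2, x]$ and exploiting uniformity in the long-tail limit $\bar{F}(x - y)/\bar{F}(x) \to 1$. The explicit form of $\bar{F}$ involving the nested logarithms and the nonlinear exponent $m(x)$ makes such uniform estimates somewhat unpleasant, which is precisely why routing the proof through the already-established $\mathbb{D}$ and $\mathbb{L}$ memberships (or regular variation) is the preferred strategy.
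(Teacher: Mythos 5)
Your proposal is correct and follows essentially the same route as the paper: the paper's proof likewise combines Theorem \ref{classD} and Theorem \ref{classL} to place the MLM distribution in $\mathbb{D}\cap\mathbb{L}$ and then invokes the inclusion $\mathbb{D}\cap\mathbb{L}\subset\mathbb{S}$ (citing Kl\"uppelberg) to conclude subexponentiality. Your additional remark about the regular-variation shortcut is a valid alternative but is not needed here.
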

\begin{proof}
Form Theorem \ref{classD} and Theorem \ref{classL}, the $MLM(\alpha,
\beta, \sigma)$ distribution belongs to $\mathbb{D}\cap\mathbb{L}$.
Using \cite{kluppelberg1988subexponential},
$\mathbb{D}\cap\mathbb{L} \subset \mathbb{S}$, where $S$ is the
class of subsexponetial distribution. Hence the theorem.
\end{proof}
\begin{definition}{(Von-Mises type function)}
A distribution function $F$ is called a Von-Mises type function if
$$ \displaystyle \lim_{x \uparrow r(F)} x\frac{d}{dx} \left[ \frac {1-F(x)}{xf(x)} \right]=0,$$
where $r(F)=\sup\{x: F(x)<1\}$ denotes the right extremity of the
distribution function $F$ \cite{embrechts2013modelling}.
\end{definition}
\begin{theorem}
The $MLM (\alpha, \beta, \sigma)$ distribution, defined in Eqn.
(\ref{gplcdf}), satisfies the Von-Mises condition.
\end{theorem}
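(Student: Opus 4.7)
The plan is to reduce the Von--Mises condition to explicit asymptotics, exploiting the nice closed form of the hazard rate for the MLM model. Note first that $r(F)=\infty$, so the limit is $x\to\infty$.

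Step 1 (Hazard rate). Dividing (\ref{gplpdf}) by $1-F(x)$ in (\ref{gplcdf}) gives
\[
h(x)=\frac{f(x)}{1-F(x)}=\frac{\alpha\,[\beta+1+\log(1+x/\sigma)]\,[\log(1+x/\sigma)]^{\beta}}{\sigma(1+x/\sigma)\,[1+\log(1+x/\sigma)]^{\beta+1}},
\]
so that
\[
\frac{1-F(x)}{xf(x)}=\frac{1}{xh(x)}=\frac{1}{\alpha}\cdot\frac{\sigma+x}{x}\cdot\frac{(1+u)^{\beta+1}}{u^{\beta}(\beta+1+u)},\qquad u:=\log(1+x/\sigma).
\]
Write this as $G(x):=\tfrac{1}{\alpha}A(x)B(u(x))$ with $A(x)=(\sigma+x)/x$ and $B(u)=(1+u)^{\beta+1}/[u^{\beta}(\beta+1+u)]$. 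Since $u'(x)=1/(\sigma+x)$, we have $xu'(x)=x/(\sigma+x)\to 1$ and $xA'(x)=-\sigma/x\to 0$.

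Step 2 (Asymptotics of $B$). The key technical observation is that $B(u)\to 1$ and, crucially, with fast rate. Taking logarithms,
\[
\log B(u)=(\beta+1)\log(1+u)-\beta\log u-\log(\beta+1+u),
\]
and expanding $\log(1+u)=\log u+\log(1+1/u)$ and $\log(\beta+1+u)=\log u+\log(1+(\beta+1)/u)$ in powers of $1/u$, the coefficient of $\log u$ vanishes ($\beta+1-\beta-1=0$) and the coefficient of $1/u$ also vanishes ($(\beta+1)\cdot 1-(\beta+1)=0$). The first nontrivial contribution is
\[
\log B(u)=\frac{\beta(\beta+1)}{2u^{2}}+O(u^{-3}),\qquad u\to\infty,
\]
so that $B(u)=1+O(u^{-2})$ and, by a parallel expansion of $B'(u)/B(u)=\tfrac{\beta+1}{1+u}-\tfrac{\beta}{u}-\tfrac{1}{\beta+1+u}$, where the $1/u$ and $1/u^{2}$ terms again cancel,
\[
B'(u)=O(u^{-3}),\qquad u\to\infty.
\]

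Step 3 (Assembling). By the product and chain rules,
\[
xG'(x)=\frac{1}{\alpha}\bigl[xA'(x)B(u)+A(x)\,xu'(x)\,B'(u)\bigr].
\]
The first bracketed term is $-(\sigma/x)B(u)\to 0$ as $B(u)\to 1$. The second term is $A(x)\cdot\tfrac{x}{\sigma+x}\cdot B'(u)$; since $A(x)\to 1$, $x/(\sigma+x)\to 1$, and $B'(u)=O((\log x)^{-3})\to 0$, this term also vanishes. Therefore $xG'(x)\to 0$, which is the Von--Mises condition.

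\textbf{Where the work sits.} Steps 1 and 3 are routine bookkeeping; the real content is Step 2, namely the double cancellation in the expansion of $\log B(u)$ (and simultaneously in $B'(u)/B(u)$). A naive bound would give only $B'(u)=O(1/u)$, which is insufficient since the factor $xu'(x)$ tends to $1$ rather than $0$. The cancellation is not coincidental: it is exactly the feature that makes the chosen $m(x)=\alpha[\log(1+x)/(1+\log(1+x))]^{\beta}$ tail-equivalent to the Pareto exponent $\alpha$ and places $F$ in $\mathrm{MDA}(\Phi_{\alpha})$, so isolating this cancellation explicitly is the main obstacle of the proof.
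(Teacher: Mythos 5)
Your proof is correct and follows essentially the same route as the paper's: write $\tfrac{1-F(x)}{xf(x)}$ explicitly from the hazard rate and differentiate logarithmically, checking that $x$ times the derivative vanishes. One remark on your closing commentary, though: the ``double cancellation'' giving $B'(u)=O(u^{-3})$ is a pleasant fact but is \emph{not} needed here, contrary to your claim that the naive bound $B'(u)=O(1/u)$ would be insufficient. Since $u=\log(1+x/\sigma)\to\infty$ and the prefactor $A(x)\,x u'(x)$ tends to the finite limit $1$, the crude estimate $B'(u)=O(1/u)=O(1/\log x)\to 0$ already yields $xG'(x)\to 0$; the Von--Mises condition asks only that the limit be zero, with no rate requirement, so each term of $B'(u)/B(u)=\tfrac{\beta+1}{1+u}-\tfrac{\beta}{u}-\tfrac{1}{\beta+1+u}$ may be bounded separately.
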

\begin{proof}
\begin{equation*}
\begin{split}
\displaystyle \lim_{x \to \infty} x\frac{d}{dx} \left[ \frac
{1-F(x)}{xf(x)} \right] & = \frac{\alpha}{\sigma} \displaystyle
\lim_{x \to \infty} x \frac{d}{dx}\left[
\frac{ \left(1+\frac{x}{\sigma}\right)\left[1+\log\left(1+\frac{x}{\sigma}\right)\right]^{\beta+1}} {x \left[\beta+1+\log\left(1+\frac{x}{\sigma}\right)\right] \left[\log\left(1+\frac{x}{\sigma}\right)\right]^\beta} \right] \\
& = \frac{\alpha}{\sigma} \displaystyle \lim_{x \to \infty} \frac{\left[1+\log\left(1+\frac{x}{\sigma}\right)\right]^{\beta+1}}{ \left[\beta+1+\log\left(1+\frac{x}{\sigma}\right)\right] \left[\log\left(1+\frac{x}{\sigma}\right)\right]^\beta} \\
& \times \left[ - \frac{1}{x} + \frac{1+\beta}{\sigma \left(1+\log\left(1+\frac{x}{\sigma}\right)\right)} - \frac{1}{\sigma \left(1+\beta+\log\left(1+\frac{x}{\sigma}\right)\right)}- \frac{\beta}{\sigma \log\left(1+\frac{x}{\sigma}\right)}\right] \\
& = 0.
\end{split}
\end{equation*}
\end{proof}

\section{Experimental Analysis} \label{experiments}
\subsection{Description of data sets}
We present here the results of fitting modified Lomax (MLM)
distribution over 50 real-worlld complex networks
\cite{snapnets,rossi2015network} coming from broad variety of
different disciplines such as Social Networks, Collaboration
Networks, Communication Networks, Citation Networks, Temporal
Networks, Web Graphs, Product co-purchasing Networks, Biological
Networks, Brain Networks, etc. Please go through the supplementary
materials for more details about the data sets under consideration.
Some statistical measures of the data sets and the detailed
experimentation of the performances of the proposed MLM distribution
compared to the other common power-law related distribution such as
Lomax, Pareto, Log-normal, power-law cutoff, Exponential and Poisson
are discussed in the following sub sections.

\subsection{Performance measures}

Here we use some evaluation measures which justify that the degree distribution of a real-world complex network can plausibly been drawn from the proposed MLM distribution.  As  here  the  actual  distribution is  discrete, we can quantify the goodness-of-fit test (i.e., how closely a hypothesized distribution resembles the actual distribution) by calculating the Chi-square statistic value based on bootstrap resampling by generating 50000 synthetic data sets. The Chi-square test will return a $p$ value which quantifies the probability that our data were drawn from the hypothesized distribution. If the $p$ value is small (less than the significance level), we can reject the null hypothesis that the data come from the MLM distribution. We have also computed few other statistical measures such as KL-divergence, RMSE and MAE for quantifying the goodness-of-fit of the proposed MLM distribution model in comparison to the other standard distribution functions related to other heavy-tailed distributions.

\subsection{Analysis of results}\label{analysis_results}

\begin{table}[h]
\tiny
\centering \caption{Performance of the proposed MLM model over
different real-worlld networks}
\makebox[\textwidth]{%
    \begin{tabular}{ |c|c|c|c|c|c|c|c|c|c|c| }
        \hline
         \multicolumn{2}{|c|}{Data} & No. of  & No. of & \multicolumn{3}{c|}{Stat. Prop.}& \multicolumn{3}{c|}{Estimated} & Bootstrap\\
         \multicolumn{2}{|c|}{sets} & nodes & edges &  \multicolumn{3}{c|}{}  & \multicolumn{3}{c|}{parameters} & chi-square\\
         \multicolumn{2}{|c|}{} &  &  &  \multicolumn{3}{c|}{} & \multicolumn{3}{c|}{} &  value \\
         \multicolumn{2}{|c|}{} &  & & \multicolumn{1}{c}{$s$} & \multicolumn{1}{c}{$\mu$} & \multicolumn{1}{c|}{$\frac{s}{\mu}$} & \multicolumn{1}{c}{$\widehat{\alpha}$} & \multicolumn{1}{c}{$\widehat{\beta}$} & \multicolumn{1}{c|}{$\widehat{\sigma}$} & $(p)$\\
         \hline
         Social & ego-Twitter(In) & 81,306  & 1,768,149  & 57.965 & 21.747  & 2.6654 & 1.9922  & -0.3591  & 30.543  & 0.9920  \\
         Networks & ego-Gplus(In) & 107,614  & 13,673,453  & 1404.8& 283.42 & 4.9568 &  0.7108 &  -0.4983 & 23.077   & 0.9963 \\
         & soc-Slashdot & 70,068   & 358,647 & 35.069 & 10.237 & 3.426 & 0.8663  & -0.6228  & 1.0461  &  0.9955 \\
         & soc-Delicious(In) &  536,108  & 1,365,961 & 39.826 & 10.673 & 3.7312 & 1.3630  & -0.6819  & 5.3709   & 0.9960  \\
         & soc-Digg(In) &  770,799  & 5,907,132  & 166.61 & 46.584 & 3.5765 & 0.7931 &  -0.6928 & 5.5163  & 0.9890  \\
         & soc-Academia &  200,169  & 1,398,063  & 48.297 & 14.259 & 3.3871 & 2.7429 & -0.3737   & 36.644   & 0.6087  \\
         & LiveJournal(In) &  4,847,571  & 68,993,773  & 44.969 & 15.368 & 2.926 &  2.6892 & -0.7272   & 51.933  & 0.8983  \\
          & Dogster-Friendship & 426,821   & 8,546,581 & 284.06 & 40.033 & 7.095 & 1.5634 & 0.3108  & 14.057  & 0.9500  \\
          & Higgs-Twitter(In) & 456,626   & 14,855,842  & 350.91 & 54.786 & 6.4051 &  1.6797 &  -0.0347  & 36.204  & 0.9870  \\
         & Artist-Facebook & 50,615   & 819,307  & 63.427 & 32.366 & 1.9596 & 2.0117 &  -0.1445  & 39.337 & 0.9812  \\
         & Athletes-Facebook & 13,866   & 86,859 & 17.978 & 12.438 & 1.4453 & 3.1229 & 0.1406  & 21.180  & 0.9640  \\
         \hline
         Citation & cit-HepTh(In) & 27,770  & 352,807  & 43.139 & 15.220 & 2.8342 & 1.8410  & -0.3093  & 16.416  & 0.8730  \\
         Networks& cit-HepPh(In) & 34,546  & 421,578  & 27.286 & 14.933 & 1.8271 & 2.5553  & -0.3622  & 34.349   & 0.9900  \\
         & cit-Patents(In) & 3,774,768  & 16,518,948 & 6.9125 & 5.0687 & 1.3637 & 4.4822  & -0.2534  &  21.689 & 0.8080 \\
         & cit-Citeseer(In) & 227,320  & 814,134 & 9.8260 & 5.4322 & 1.8088 & 2.2630  &  -0.2788 & 7.4150  & 0.6350  \\
         \hline
         Collaboration & ca-CondMat & 23,133  & 93,497 & 10.671 & 8.0189 & 1.3308 &   3.1068 &  0.3615 & 10.5353   & 0.9896 \\
         Networks & ca-AstroPh & 18,772  & 198,110  & 30.568 & 21.103 & 1.4484 & 16.434  & 37.276   & 0.0101   & 0.9990  \\
         & ca-GrQc & 5,242  & 14,496  & 7.9186 & 5.5284 & 1.4322 & 2.2624  &  3.5861 & 0.6765  &  0.7849   \\
         & ca-HepPh & 12,008 &  118,521 & 46.654 & 19.696 & 2.3687 & 0.9798  &  2.8780 &  0.6791  &   0.8163  \\
         & ca-HepTh & 9,877 & 25,998 & 6.1867 & 5.2618 & 1.1757 & 2.9417  &  5.2791 &  0.4825 &   0.9332  \\
         \hline
         Web & Google(In) & 875,713  & 5,105,039  & 43.320 & 7.1444 & 6.0634 & 1.1999  &  -0.6399 & 2.0429   & 0.9780 \\
         Graphs & BerkStan(In) & 685,230 & 7,600,595  & 300.08 & 12.316 & 24.364 & 1.4129  & 1.8449  &  0.7592  & 0.6250  \\
         & Wikipedia2009(In) & 1,864,433  & 4,507,315   & 12.846 & 4.8903 & 2.6268 & 1.3988   &  -0.6291 &  1.9658  & 0.9891  \\
         & WikipediaLinkFr(In) & 4,906,478   & 113,122,279  & 1864.4 & 48.608 & 38.356 &  1.0988 & -0.7123  &  9.8888  & 0.9152  \\
         & Hudong(In) &   1,984,484 & 14,869,483   & 199.28 & 16.467 & 12.101 & 1.1567 &  10.921 & 0.0013   & 0.9883  \\
         \hline
         Biological & Yeast-PPIN & 2,361  & 7,182 & 8.0800 & 6.0838 & 1.3281 & 10.535  &  -0.4527 &  175.29  & 0.9930 \\
         Networks & Diseasome &  3,926 &  7,823 & 9.1009 & 5.5334 & 1.6447 &   10.9688 & -0.9493 & 134.52   & 0.8090  \\
         &  Bio-Mouse-Gene & 45,101  & 14,506,199 & 856.67 & 643.27 & 1.3317 & 6.3e-08  &  -1.2e-02 &  2.1e+00  & 0.9898 \\
         & Bio-Dmela & 7,393  & 25569  & 10.782 & 6.9170 & 1.5587 & 14.979  &  -0.5053 & 498.27  &  0.9806  \\
         & Bio-WormNet-v3 & 16,347  & 762,822 & 138.17 & 93.328 & 1.4805 & 5.6496   & -0.9801  &  704.71  & 0.9938 \\
         \hline
         Product  & amazon0601(In) & 403,394  & 3,387,388 & 15.279 & 8.3989 & 1.8191 & 3.8261  & -.7137  &  19.522  & 0.6010   \\
         co-purchasing  & amazon0505(In) &  410,236 & 3,356,828  & 15.313 & 8.1826 & 1.8714 & 3.8367  &  -0.8006 & 19.984  & 0.6880 \\
         networks  & amazon0312(In) &  400,727 & 3,200,444 & 15.073 & 7.9865 & 1.8873 & 3.7631  & -0.8179  & 18.747   & 0.5890 \\
         \hline

         Temporal  & sx-mathoverflow(In) &  24,818 & 506,550  & 31.476 & 10.424 & 3.0195 &  1.4452 & 2.4236  &  0.8241    & 0.9846  \\
         Networks  & sx-stackoverflow(In) & 2,601,977  & 63,497,050  & 186.00 & 27.647 & 6.7278 & 1.0218 & -0.8224  & 4.4865   & 0.9490 \\
           & sx-superuser(In) & 194,085 & 1,443,339 & 23.782 & 5.8239 & 4.0836 &  1.7401 &  2.1405 & 0.7284  & 0.9780 \\
           & sx-askubuntu (In) & 159,316 & 964,437 & 18.404 & 4.3856 & 4.1966  & 2.1923  &  2.2069 &  0.7665   & 0.9300 \\
        \hline
          Communication & Email-Enron & 36,692  & 183,831 & 36.100 & 10.021  & 3.6027  & 1.2417 &  -0.1275 & 2.9045  & 0.9641 \\
         Networks & Wiki-Talk(In) & 2,394,385  & 5,021,410 & 12.259 & 2.1195 & 5.7844 & 1.5167  &  -0.2846 &  0.0016  & 0.9900 \\
         & Rec-Libimseti(In) & 220,970  & 17,359,346 & 413.71  & 102.85 & 4.0227 &  2.5008 & -0.8496  & 331.18   & 0.9670  \\
         \hline
        Ground-truth  & Wiki-Topcats & 1,791,489  &  28,511,807 & 283.78  & 15.915 & 17.831 & 1.1811  &  0.1998 & 2.6412  & 0.8310 \\

        Networks  & com-Friendster & 65,608,366  & 1,806,067,135  & 137.81 & 55.056 &  2.5031 & 4.5863 & -0.9188  & 590.01  & 0.9000 \\

         & com-LiveJournal & 3,997,962  &  34,681,189 & 42.957 & 17.349 & 2.4759 & 2.8206 & -0.6020  & 65.638  & 0.7980  \\

        & com-Orkut & 3,072,441 & 117,185,083  & 154.78 & 76.281 & 2.0291 & 3.7049 & 0.1292  & 167.93 &  0.9890  \\

        & com-Youtube & 1,134,890 & 2,987,624  & 50.754 & 5.2650 & 9.6398 & 1.6113 & 8.3355  & 0.0094 &  0.8410 \\
       \hline
        Brain & Human25890-session1 & 177,584  & 15,669,036  &  319.01  & 176.47  & 1.8078  & 1.6098 & -0.2076 & 168.75   & 0.8710    \\
         Networks & Human25890-session2 & 723,881  & 158,147,409  & 667.91   & 436.94  & 1.5286  & 14.423 & -0.3466 & 18886.3   & 0.9980   \\
         & Human25864-session2 &  692,957 &  133,727,516 &  554.48  &  385.96 &  1.4366 & 16.250 & -0.3379  &  19217.8  & 0.9660   \\
         & Human25913-session2 & 726,197  & 183,978,766  &  446.92  & 258.99  & 1.7256  & 7.1013 & -0.4681  & 5779.8   & 0.9290    \\
         & Human25886-session1 & 780,185  & 158,184,747  &  558.41  &  405.50 & 1.3771  & 21.591 & -0.3119 &  26975.9  & 0.9768   \\
         \hline
    \end{tabular}
    }
    \label{table:estparameter}
\end{table}


Table \ref{table:estparameter} represents some of the statistical
measures corresponding to the network data and also provides the
statistical evidences of the proposed fitting over the node degree
distribution in the whole range using MLM distribution. CV is also
calculated corresponding to each of the degree distribution data and
it gives us the sufficient condition for the existence of the global
maximum at finite point of the $MLM (\alpha, \beta, \sigma)$
distribution. From Table \ref{table:estparameter} it is clear that
that the value of CV is greater than 1 in all the network data sets
under consideration. Thus it confirms that the maximum likelihood
estimates for the parameters $(\alpha, \beta, \sigma )$ of the
proposed MLM distribution attain at the finite points which has been
theoretically described in Section \ref{statistics_prop}\ref{existnce_mle_mlm}. To estimate the parameters
$(\alpha, \beta, \sigma )$ of the MLM distribution numerically, we
have used "optim" function along with the quasi-Newton L-BFGS-B
algorithm in R statistical software by taking the initial parameters
value $(\alpha, \beta, \sigma ) = (1,0,1)$. The estimated values of
the parameters for all the data sets satisfied the condition, i.e.,
$(\alpha > 0, \; \beta > -1 \; \text{and} \; \sigma > 0)$ as clearly
seen in Table \ref{table:estparameter}, for the complete
characterization of the proposed MLM distribution. Empirically it is
observed that in almost all the cases the estimated value of the
parameter $\sigma$ attains the higher values as compared to the
estimated value of $\alpha$. On the other hand, the estimated value
of the parameter $\beta$ lies between $(0,1)$ lies between $-1$ and
$1$ except a few which can be clearly seen from Table
\ref{table:estparameter}.

Furthermore, we leverage one of the popular statistical method viz.
bootstrapping chi-square test to evaluate the goodness-of-fit test
of the proposed MLM distribution. From Table
\ref{table:estparameter}, it is clear that the proposed MLM
distribution produces higher p values  (i.e. closure to 1) in almost
all the data sets which suggest that the null hypothesis i.e. the
data drawn from MLM distribution cannot be ruled out at the $0.05$
level of significance. This indicates that the observed degree
distribution is plausibly drawn from the MLM distribution. Thus from
Table \ref{table:estparameter} it can be concluded that the proposed
MLM distribution is effective in modeling the entire degree
distribution of real-worlld complex networks without ignoring some
of the lower degree nodes as oppose to the procedure of fitting
power law distribution. In addition, we also used some other
statical measures viz. KLD, RMSE and MAE in order to compare the
performance of the proposed MLM distribution with the each of the
other common power-law related distributions as given in the
following Tables \ref{table:compared1} and \ref{table:compared2}.


\begin{table}[h]
\tiny
\centering \caption{Table of different statistical measures of
different competitive models over real-worlld networks}
\makebox[\textwidth]{%
    \begin{tabular}{ |c|c|c|c|c|c|c|c|c|c|c|c|c|c| }
        \hline
         \multicolumn{2}{|c|}{Data} & \multicolumn{3}{|c|}{MLM}  & \multicolumn{3}{|c|}{Lomax} & \multicolumn{3}{|c|}{Power-law} & \multicolumn{3}{|c|}{Pareto} \\
         \multicolumn{2}{|c|}{sets} & \multicolumn{3}{|c|}{}  & \multicolumn{3}{|c|}{} & \multicolumn{3}{|c|}{} &  \multicolumn{3}{|c|}{}\\
         \multicolumn{2}{|c|}{} & \multicolumn{3}{|c|}{}  & \multicolumn{3}{|c|}{} & \multicolumn{3}{|c|}{} &  \multicolumn{3}{|c|}{} \\
         \multicolumn{2}{|c|}{} & \multicolumn{1}{c}{RMSE} & \multicolumn{1}{c}{KLD} & \multicolumn{1}{c|}{MAE} & \multicolumn{1}{c}{RMSE} & \multicolumn{1}{c}{KLD} &\multicolumn{1}{c|}{MAE} & \multicolumn{1}{c}{RMSE} & \multicolumn{1}{c}{KLD} & \multicolumn{1}{c|}{MAE} & \multicolumn{1}{c|}{RMSE}  & \multicolumn{1}{c}{KLD} & \multicolumn{1}{c|}{MAE}\\
         \hline

         Social & ego-Twitter(In) & 16.800  &  0.00819 & 1.3498 &  29.366  & 0.01354 & 2.4701 &  204.35  & 0.1831 & 10.847 & 354.25  & 0.2857  & 15.603    \\
         Networks & ego-Gplus(In) & 1.6115   &  0.05601 & 0.1825 &  10.491  & 0.06444 & 0.3033 &  53.064  & 0.2299 & 0.9221 & 86.955  & 0.3113  & 1.1847   \\
         &  soc-Slashdot  & 31.527   & 0.01365 &  2.3951 &  32.065  & 0.014102 &  2.4658 &  247.87 & 0.1007  & 10.074 & 247.84  & 0.1007  &  10.073  \\
         & soc-Delicious(In)  &  79.809 & 0.00839  & 3.7730 & 91.993   &  0.01326 & 4.8060  &  349.66 & 0.2021  & 14.867 & 471.02  & 0.1349 & 17.874   \\
          & soc-Digg(In)   & 13.634 & 0.02182  & 0.8440 & 24.841   &  0.02391 &  1.0269 & 208.01  &  0.1601 & 4.2185  &  212.87 & 0.1601 & 4.2312  \\
         & soc-Academia  & 16.323  & 0.00351  & 0.5705 & 48.951   & 0.01019 & 1.6178 &  229.54 &   0.2027 & 6.3889 & 440.15  & 0.274 &  10.464  \\
         & LiveJournal(In)  &  243.99  &  6.13e-04 & 5.4026  &  1764.9  & 0.02111  & 54.400  & 5025.2 & 0.1614  & 127.98  &  8100.9 & 0.1785 & 164.18   \\
         & Dogster-Friendship  & 32.203 & 0.01328 & 0.8502 &  36.449  & 0.01700 &  1.0755   & 358.27 & 0.2926  & 5.6815  & 549.57  & 0.4618  & 7.6734  \\
         & Higgs-Twitter(In) & 19.821  & 0.00785 & 0.4710 &  20.609  & 0.00793 & 0.4621  & 260.32 &  0.2492 & 4.8938  & 524.96  & 0.4806 & 7.6938   \\
         & Artist-Facebook & 11.708  & 0.01079 & 2.1381 & 12.923   & 0.01199 & 2.6173 & 100.49  & 0.1643 &  14.552 & 350.05  & 0.4010 & 26.467   \\
         & Athletes-Facebook & 4.4304  & 0.00879 & 1.3252 &  9.2379 & 0.00966 & 1.8260 &  100.16 & 0.2049 & 13.387 & 204.91  & 0.4164  & 23.839   \\
         \hline
         Citation & cit-HepTh(In) &  3.2640   & 0.01354 & 0.5071 &  7.9393  & 0.01585 & 0.7585  & 73.531  & 0.1741 & 4.0821 & 122.79 & 0.2566 & 5.997  \\
         Networks& cit-HepPh(In)& 9.6810  & 0.00821  & 1.9016 &  21.303  & 0.01317  & 3.1135  & 128.55   & 0.1825 & 13.234  &  257.41   & 0.2689 & 21.445  \\
         & cit-Patents(In)  & 445.80  & 1.61e-04  & 47.603 & 2577.5   &  0.00192 & 230.35 & 27.5K & 0.2266 & 2049.5 &   34.8K  & 0.2366 & 2533.2  \\
         & cit-Citeseer(In) & 40.728  & 0.00228  & 3.3778 &  28.032  & 0.00278 &  3.3902 & 889.88 & 0.3308 & 49.467 &  1156.2 & 0.2916 &  62.026  \\
         \hline
         Collaboration & ca-CondMat & 14.830  & 0.00479  & 4.1570 &  36.094 & 0.00814  & 7.3904   & 107.86 & 0.1025  & 26.092   & 469.12    & 0.3738 & 63.075  \\
         Networks & ca-AstroPh & 23.890  & 0.02756  & 5.9796 &  32.799  & 0.03457 & 7.4448  & 92.255   & 0.1753 & 15.158  &  251.03   & 0.3816 & 27.707   \\
         & ca-GrQc   & 15.850  & 0.03055  & 7.2247  &  35.935  & 0.04013 & 12.286  &  124.24  & 0.2554 &  27.137 & 202.33   & 0.2741 & 44.221  \\
         & ca-HepPh & 13.944  & 0.06959 & 4.1919 & 19.607 & 0.07266 & 4.7763 & 75.071   & 0.1769 & 8.0906  & 144.39  & 0.2569   & 14.668  \\
         & ca-HepTh   & 23.280  & 0.00896 & 10.851  &  61.797 &  0.01353 & 20.391  &  268.91  & 0.2346 & 66.108  &  437.19 & 0.2829  & 106.36   \\
         \hline
         Web & Google(In) & 360.62  & 0.01368   & 13.845 &  337.68  & 0.01546 & 14.201  & 1809.1  & 0.124 & 45.023    & 1809.2    & 0.124  & 45.023  \\
         Graphs & BerkStan(In) &  71.819 & 0.03116   & 0.9478 & 105.20   & 0.0346 & 1.1962  &  615.03  &  0.1863 & 4.0722  &   615.01  & 0.1863  & 4.0721  \\
         & Wikipedia2009(In) & 86.510  &  0.00169  & 7.7498  &  103.94  & 0.00197  &  8.5289 &  4371.9 & 0.1352  &  164.58 &  4371.9  & 0.1352  & 164.58 \\
         & WikipediaLinkFr(In) & 124.98 &  0.01776  & 0.3174 &  146.14 & 0.03082 & 0.4465  & 248.09  &  0.1518 & 0.7857  & 397.39   &  0.1521 & 1.0815 \\
         & Hudong(In)  & 8.0517 & 0.00433  & 0.2508  & 25.163  & 0.00525 & 0.4600 & 587.21  &  0.0868 & 4.6828  & 587.22  &  0.0868 & 4.6828  \\
         \hline
         Biological & Yeast-PPIN &  4.3766 & 0.01487   & 2.6321 &  12.651  & 0.02389 & 5.4529 &  75.325   & 0.1999 & 19.013  & 77.455 & 0.1998 & 19.079  \\
         Networks & Diseasome & 8.8683  & 0.08000  & 2.7445  &  12.451  & 0.10202 & 3.4575  & 26.006   & 0.2248 & 5.3567  & 26.005    & 0.2248 & 5.3566  \\
         & Bio-Mouse-Gene &  7.6919 & 0.18941  & 2.2557  & 14.654  & 0.19473 & 2.3919 &  41.371  & 0.4566 & 3.9018  &  92.539   & 0.5373 &  4.7342  \\
         & Bio-Dmela & 16.173  & 0.01305  & 4.0968  &  10.579  & 0.01759 & 3.8219  &  143.71 & 0.1907 &  21.415 &   143.67  & 0.1907  & 21.414  \\
         & Bio-WormNet-v3 &  14.054 & 0.04648  & 2.6066 &  13.018  & 0.09249 & 3.7468 & 46.259  & 0.2761  & 6.8867  &   101.89  &  0.3744 & 9.0163  \\
         \hline
         Product & amazon0601(In) & 94.347  &  0.00374 & 8.1863 & 147.602   & 0.00695  & 10.928  & 1495.4 & 0.2708 & 70.281   & 2539.8    & 0.4022 & 114.59 \\
         co-purchasing & amazon0505(In)  &  109.95 &  0.00412 &  9.1836 &  94.048  & 0.00499 & 8.7882 &  1572.9  & 0.2463 & 73.003  &  2494.5   & 0.3711 & 111.56 \\
         networks  & amazon0312(In) & 100.89 & 0.00430  & 8.5465  & 92.525   & 0.00495 & 8.5742 & 1564.4   &  0.2425 &  71.875 &   2462.9 &  0.3686 & 109.21  \\
         \hline
         Temporal  & sx-mathoverflow(In) &  19.706  & 0.01879  & 2.4647 & 38.764   & 0.02621 & 3.7877  & 213.91 & 0.2131 &  13.600  &  213.82   &  0.2132 &  13.612  \\
         Networks  & sx-stackoverflow(In) &  39.654  & 0.00336  & 0.8694  &  62.254  &  0.00345 & 1.0741  & 1877.5 & 0.2016 &  14.007  &  1884.7   & 0.2017 & 14.017  \\
         & sx-superuser(In) &  79.777  &  0.00654 & 4.5409   &  136.85  & 0.01045  & 6.8313  & 900.04  & 0.1808 & 33.837   & 900.33   &  0.1808 &  33.839  \\
          & sx-askubuntu(In) &  106.04 & 0.01100 & 6.2022  & 176.58  & 0.01707 & 9.3509  & 949.66  & 0.2091  &  39.419 &  949.73 & 0.2091 & 39.420 \\
         \hline
          Communication & Email-Enron & 74.667  & 0.03523   & 5.2075 &  76.155 & 0.03531  &  5.2347 & 246.51 & 0.1779 &  14.886  & 245.25    & 0.1778 &  14.859 \\
         Networks & Wiki-Talk(In)  &  670.47 &  0.00356 & 25.871  & 671.76  & 0.00357  & 25.898  & 9669.4   & 0.3376 &  293.63 & 9669.4   & 0.3376 &  293.63   \\
         & Rec-Libimseti(In) & 23.341  & 0.02163  &  0.4953   &  66.434 & 0.09978 & 1.7923  &   77.081  &   0.2198   &  2.1486   &  133.91   & 0.2096    &  2.7441  \\
         \hline
         Ground-truth & Wiki-Topcats & 11.375  &  0.00190  & 0.1011 & 14.955 & 0.00201 &  0.1347   & 565.21  & 0.1377 &  2.6145  &  930.44   & 0.1612  & 3.8073 \\
         Networks & com-Friendster &  8266.8  &   0.00126 &  411.15  &  41.69K & 0.06401  &  3385.2 &  71.5K  &  0.1498  &  4575.6   & 129K  & 0.1498 & 5591.7  \\
         & com-LiveJournal & 165.79 & 0.00084   &  6.9832 & 1741.3  & 0.02462  & 50.318 & 4102.9  & 0.1823 & 106.85  &  7116.4 & 0.2147  & 150.46  \\
         & com-Orkut &   197.89   & 0.00793   &  7.0113 & 207.43  &  0.01049 & 9.9761 & 2443.6   & 0.5498 &  80.712 & 4299.3   & 0.8033 & 101.64  \\
         & com-Youtube  & 53.288  &  0.00122  &  0.6984 &  81.409 & 0.00175  & 1.0862  &  1380.5  &  0.1342 &  15.690 &  1380.5  & 0.1342 & 15.691  \\
         \hline
         Brain & Human25890-session1 & 17.309 &  0.01920 & 3.7439 &  19.545 &  0.02264 & 4.0023 & 305.41   & 0.3397  & 22.537  &  588.71  & 0.5598   & 30.333   \\
         Networks & Human25890-session2 &  46.276  & 0.01024 & 5.8781  &  97.122 &  0.04774 & 15.303  &  794.95  & 0.4462 &  50.379 &  1623.8   & 0.6513  & 63.754   \\
         & Human25864-session2 & 64.037 & 0.01321  & 9.8670   & 111.45  & 0.05335  & 20.876  &  1120.3  & 0.4967 & 68.172  &  1711.3  & 0.6419  &  78.736  \\
         & Human25913-session2 &  112.661 &  0.01347  &  11.574  & 119.54  & 0.04719  & 20.971  & 904.76   & 0.2764  & 58.999  & 1892.4  & 0.4566 &  81.206  \\
         & Human25886-session1 & 65.181 & 0.01471  & 12.051 & 116.78  &  0.05476 & 23.396  &   978.66 &   0.4664 & 69.517  & 1873.6  & 0.6805 & 86.597  \\
         \hline
    \end{tabular}
    }\label{table:compared1}
\end{table}


Tables \ref{table:compared1} and \ref{table:compared2} depict the
values of different statistical measures (viz. RMSE, MAE and KLD)
which has been used for the measure of performances of the MLM
distribution in comparison to the competitive distributions while
modeling the data. RMSE and MAE are two different variants, carrying
information about the differences between actual and predicted
degree frequencies corresponding to a network. Higher similarity
between actual and mapped distributions is achieved by generating
smaller  values of RMSE and MAE. From Tables \ref{table:compared1}
and \ref{table:compared2}, it is clear that the proposed MLM
distribution provides smaller RMSE and MAE values compared to other
competitive distributions in almost all the networks except a few
where the power-law cutoff distribution outperforms the others. The
worst performance observed for the poisson distribution in
minimizing the RMSE and MAE values compared to the other competing
distributions over all the real-worlld networks as clearly seen from
Table \ref{table:compared2}.
The Kullback-Leibler divergence (KLD), or relative
entropy, is a quantity which measures the dissimilarity between two
probability distributions. Thus the smaller value of KLD represents
the higher similarity between the actual and the predicted
distribution. From Tables \ref{table:compared1} and
\ref{table:compared2} it is clear that the proposed MLM distribution
generates smaller KLD values compared to other competitive
distributions in almost all the networks except a few where
power-law cutoff distribution outperforms the others. This indicates
that the observed degree distribution satisfactorily matches the
proposed MLM distribution in almost all the networks. Note that, in
terms of KLD, the Poisson and Exponential distributions always
perform worse than the others in all the networks as in the case
RMSE and MAE.
The performance of the proposed MLM distribution is always superior to
the competitive in terms of KLD over almost all the networks. Thus
overall, by considering RMSE, MAE and KLD values, the performance of
the proposed MLM distribution for all the networks is found to be
better than the other competing distributions which suggest that the
observed distribution plausibly comes from the proposed MLM
distribution.

\begin{table}[h]
\tiny
\centering \caption{Table of different statistical measures of
different competitive models over real-worlld networks}
\makebox[\textwidth]{%
    \begin{tabular}{ |c|c|c|c|c|c|c|c|c|c|c|c|c|c| }
        \hline
         \multicolumn{2}{|c|}{Data} & \multicolumn{3}{|c|}{Log-normal}  &  \multicolumn{3}{|c|}{Poisson} & \multicolumn{3}{|c|}{Power-law} & \multicolumn{3}{|c|}{Exponential}\\
         \multicolumn{2}{|c|}{sets} & \multicolumn{3}{|c|}{}  & \multicolumn{3}{|c|}{} & \multicolumn{3}{|c|}{Cutoff} & \multicolumn{3}{|c|}{}\\
         \multicolumn{2}{|c|}{} & \multicolumn{3}{|c|}{}  & \multicolumn{3}{|c|}{} & \multicolumn{3}{|c|}{} & \multicolumn{3}{|c|}{} \\
         \multicolumn{2}{|c|}{} & \multicolumn{1}{c}{RMSE} & \multicolumn{1}{c}{KLD} & \multicolumn{1}{c|}{MAE} & \multicolumn{1}{c}{RMSE} & \multicolumn{1}{c}{KLD} & \multicolumn{1}{c|}{MAE} &\multicolumn{1}{c}{RMSE} & \multicolumn{1}{c}{KLD} & \multicolumn{1}{c|}{MAE} & \multicolumn{1}{c}{RMSE} & \multicolumn{1}{c}{KLD} & \multicolumn{1}{c|}{MAE} \\
         \hline

         Social & ego-Twitter(In) & 53.863  & 0.0169 & 2.9494  & 410.93   & 10.452  & 36.645 & 68.004  &  0.0397 & 4.1974 & 157.98 & 0.2733 &  11.567 \\
         Networks & ego-Gplus(In) &  10.155 & 0.0678 & 0.2523   &  95.967  & 25.317 & 3.0371 &  30.925 & 0.1475  &  0.6821 & 50.098 & 1.3131 &  1.8328 \\
         &  soc-Slashdot  & 237.63  &  0.1058 & 10.549 &  684.36 & 10.069 &  42.407 & 19.598  & 0.0075  &  1.3599  &  434.25 &  0.6381 & 22.275\\
         & soc-Delicious(In) & 281.34  & 0.0579   & 10.781  & 957.82  & 6.8432 & 56.634 &  66.896 & 0.0185 & 4.2366  &  535.11 & 0.4626  & 25.304 \\
         & soc-Digg(In)   & 69.438  & 0.0552   & 1.9087 & 323.59  & 21.541  & 15.134 & 65.713  & 0.0441 & 1.9042  &  204.50 &  0.8907  & 8.0015 \\
         & soc-Academia   & 91.003  & 0.0169   &  2.0921 &   542.38 & 7.2349 & 22.153  &  62.376 & 0.0255 &  1.9845 &  198.11 &  0.1924  & 6.6739 \\
         & LiveJournal(In)  & 3473.6   & 0.0355    & 70.64   & 13.61K & 9.1120 & 481.38 & 808.79  & 0.0101  & 24.501  & 7017.9   & 0.3449    & 186.01 \\
          & Dogster-Friendship  & 42.539   & 0.0272   & 1.1459  & 494.49 &  14.575 & 15.309  & 182.47  & 0.1862  & 4.1579  & 165.19  & 0.4765   & 5.4623  \\
          & Higgs-Twitter(In) & 41.955   & 0.0134  &  0.5995  & 448.91 &  16.309 & 14.753  &  118.23 &  0.0914 & 2.8051   &   134.68 & 0.3163  &  4.4689 \\
          & Artist-Facebook &  24.071 & 0.0154 & 3.026 &  323.46 & 12.537 & 53.920 & 56.801 & 0.0458 & 6.6452 & 88.351 & 0.1799  & 13.623  \\
          & Athletes-Facebook  & 15.461  & 0.0127  & 2.5674 &  175.95 & 4.7428 & 35.815 &  25.099 &  0.0324 & 4.3180  &  28.388 & 0.0586   & 6.2610  \\
         \hline
         Citation & cit-HepTh(In) &  22.59  & 0.0255 & 2.331 & 153.39   & 8.0679 & 13.774 &  25.42  & 0.0464 & 2.816 & 58.74  & 0.2778 & 4.5286  \\
         Networks& cit-HepPh(In) & 44.951 & 0.0189 & 4.4405  &  303.32  & 7.9234 & 46.775  & 36.887    &  0.0221 &  4.5287 & 107.32 & 0.1801 & 14.145 \\
         & cit-Patents(In) & 9612.7 & 0.0192 & 725.71 & 38.2K   & 1.6549  & 3657.1  & 2424.5    & 0.0061 & 271.89  & 13.2K & 0.0659 & 1147.5 \\
         & cit-Citeseer(In) & 353.26 & 0.0299 & 21.921 & 1507.6 & 2.566 & 109.15  & 195.67 & 0.0131 & 13.877 & 629.02 & 0.1486 &  44.301 \\
         \hline
         Collaboration & ca-CondMat & 42.665 & 0.0082 & 6.5746  & 378.65  & 2.7263 & 80.781   &  62.929  & 0.0287  & 13.362 & 64.985 & 0.0472 & 16.873\\
         Networks & ca-AstroPh & 28.209 & 0.0312 & 6.8565  &  229.81  &  9.8703 &  55.51 &  50.604  & 0.0384 & 7.4579 & 68.185 & 0.1235 & 14.361\\
         & ca-GrQc   & 30.184 & 0.0515  & 12.148    & 193.94  & 2.3256 & 63.61  &  58.305  & 0.0659  & 18.259 & 69.169 & 0.1418 & 25.759 \\
         & ca-HepPh  & 29.993 & 0.1011  & 6.8958  & 185.48  & 11.609  &  39.673 & 50.717 & 0.1128   & 8.2477 & 89.936  & 0.5187 & 17.589  \\
         & ca-HepTh  & 55.618 &  0.0178 &  21.032  &  370.15  & 1.5051 & 133.68  & 89.425  & 0.0245 & 27.613 & 109.96 & 0.0551  & 43.882  \\
         \hline
         Web & Google(In) & 1514.5 & 0.0878 & 40.067   & 4442.6 & 4.712 & 154.92   & 188.01    & 0.0157  & 9.6549 & 2589.4 & 0.4419 & 76.441 \\
         Graphs & BerkStan(In)  & 185.04 & 0.1002 & 2.0198   & 993.01   & 7.0379  &  11.9628 &  322.63   &  0.1037 & 2.8203 & 595.53 & 0.7438 & 6.6185 \\
         & Wikipedia2009(In) & 2720.9  & 0.0798 &  116.43   & 8425.7 & 3.6475  & 398.72 & 781.87  & 0.0082 & 35.531 &  4727.1 &  0.3431 & 213.66  \\
         & WikipediaLinkFr(In)  &  240.18 & 0.0543 &  0.5234  & 762.72 & 25.726  & 4.0278  & 121.31 & 0.0622 & 0.5006  & 534.61  & 1.0217 & 2.0471 \\
         & Hudong(In)  & 746.47  & 0.1593 & 6.5493  &  1975.73 & 11.088 & 25.837  & 75.362   & 0.0063 & 0.8323  & 1494.8  & 1.1798  & 15.836 \\
         \hline
         Biological & Yeast-PPIN & 29.928 & 0.0496 & 9.3869  & 109.62  & 2.5149 & 39.181  &  4.9595     &  0.0175 & 2.9178 & 45.462 & 0.1234 & 13.786 \\
         Networks & Diseasome & 23.282 & 0.1552 & 4.8906  &  55.985  & 3.0101 &  12.001 &  9.3332   & 0.0822  & 2.8587 & 31.709 & 0.2979 & 5.7013 \\
         & Bio-Mouse-Gene &  17.199 & 0.1878  & 2.5372  &  101.23  & 15.318  & 10.254  & 9.277    & 0.0943 & 1.6036  & 31.649  & 0.4882  & 3.6376  \\
         & Bio-Dmela & 46.271  & 0.0426  & 9.2857 &  206.44 & 3.6221  &  45.991 &  24.091  & 0.0162  &  5.0541 & 86.659  & 0.1724  &  18.048\\
         & Bio-WormNet-v3 & 17.726 & 0.0851  & 3.9352 &  104.46 & 18.563  & 21.927  &  6.7764 & 0.0419 &  2.2424 & 40.795  & 0.3082  & 7.0826 \\
         \hline
         Product & amazon0601(In) & 286.61 & 0.0102 & 16.881    & 2064.6 &  2.7267 & 140.46   &  297.39   & 0.0382 & 22.199 & 308.32 & 0.0574 & 24.114 \\
         co-purchasing & amazon0505(In)  & 358.59 & 0.0125 & 19.123 & 2172.5   & 3.0551  &  144.34 &   260.85  & 0.0342 &  20.136 &  390.13 & 0.0628 & 26.178 \\
         networks  & amazon0312(In) & 338.03 & 0.0116 & 17.742 &  2131.9  &  2.6839 & 140.75  &  273.39  &   0.0352 & 20.381 & 383.82  & 0.0639 &  26.299 \\
         \hline
         Temporal  & sx-mathoverflow(In) & 41.934 & 0.0634 & 5.3161  &  281.78 &  8.0773 &  32.868  &  92.603   &  0.0861 &  7.9912 &  129.69 & 0.4636 &  15.172 \\
         Networks  & sx-stackoverflow(In) & 341.96 & 0.0286 & 4.4267  & 2469.6 & 18.054 & 42.111   & 740.18    & 0.0685  & 7.2275  & 1324.4 & 0.6829 & 19.362  \\
         & sx-superuser(In) & 243.42 & 0.0616 & 13.199  & 1246.2 & 3.8103 & 68.010   &   354.72  & 0.0570 & 16.613 & 609.40 & 0.3891 & 34.655 \\
          & sx-askubuntu(In) & 212.91  & 0.0649 & 12.451  & 1228.7  & 2.6253 & 68.973  & 389.14  & 0.0719 & 20.113 & 555.44  & 0.3433 & 33.693 \\
         \hline
          Communication & Email-Enron & 121.47 & 0.0873 & 8.445   & 426.39  & 6.8601 & 38.373   &  95.468   & 0.0689 & 7.664 & 230.41 & 0.5405 & 18.139 \\
         Networks & Wiki-Talk(In)  & 7978.6  &  0.1902 & 246.26   &  21.9K  & 1.2506 & 646.54   &  672.32  & 0.0036 & 25.905 & 16.5K & 0.4879 & 542.31 \\
         & Rec-Libimseti(In) & 87.472  & 0.0755  & 1.4021  &  281.18  & 30.222   & 8.0019    & 28.059  & 0.0359  & 0.6971 & 166.18  &  0.8547 & 3.9402 \\
         \hline
          Ground-truth & Wiki-Topcats  & 272.99 & 0.0464 & 1.5159  & 1477.2 & 8.7468 & 12.121   &  389.86   & 0.0629  & 2.2289 & 832.23 & 0.6767 & 5.8936\\
         Networks & com-Friendster & 101K & 0.0762 & 4022.8 &  280K & 24.658  &  22.1K  & 17.8K  & 0.0052 & 1025.5 &  193K &  0.7216 &  10.1K \\
         & com-LiveJournal & 2629.9 & 0.0299 & 51.656 & 10.9K  &  9.5778 & 401.89  &  497.89 & 0.0104  & 18.559  & 5230.3 & 0.2889 &  139.74 \\
         & com-Orkut & 452.92 & 0.0459 & 19.624  &  3118.1  & 11.839 & 135.17  & 261.75  & 0.0479 & 16.197 & 228.83 & 0.0599 & 14.496 \\
         & com-Youtube  & 1422.2 & 0.1416 & 17.219  &   3838.9 & 3.4522  & 51.118  & 143.79   & 0.0045 & 2.2564 & 2515.4 & 0.6241 & 31.101 \\
         \hline
         Brain & Human25890-session1 &   27.703 &  0.0222 &  4.1822  & 472.32   & 16.412  & 52.714 & 65.972 &  0.0509 & 7.8146  & 92.649   & 0.2272 &  15.977   \\
         Networks & Human25890-session2 & 78.483  & 0.0471 & 14.289   &  1326.6  & 16.151 & 112.92 & 83.707  & 0.0162 & 6.709  & 183.06 & 0.1701  & 26.302 \\
         & Human25864-session2 & 83.489 & 0.0495 & 18.215   & 1433.3   & 17.298 & 143.91 &  106.20 & 0.0156  &  9.8828  &  212.96 & 0.1605 &  33.272 \\
         & Human25913-session2 & 99.615  &  0.0292 &  15.796  & 1614.4  &  18.899 &  171.16 & 223.39 & 0.0219 & 15.122 & 440.58  & 0.3331 & 59.629 \\
         & Human25886-session1 &  89.805 &  0.0568 &  20.819  & 1568.3  & 13.481  & 153.03   &  102.34 & 0.0154 & 11.296 & 207.52  &  0.1354 & 33.287  \\
         \hline
    \end{tabular}
    }\label{table:compared2}
\end{table}

\begin{figure}[H]
\centering \makebox[\textwidth]{
\includegraphics[width=7cm, height=5cm]{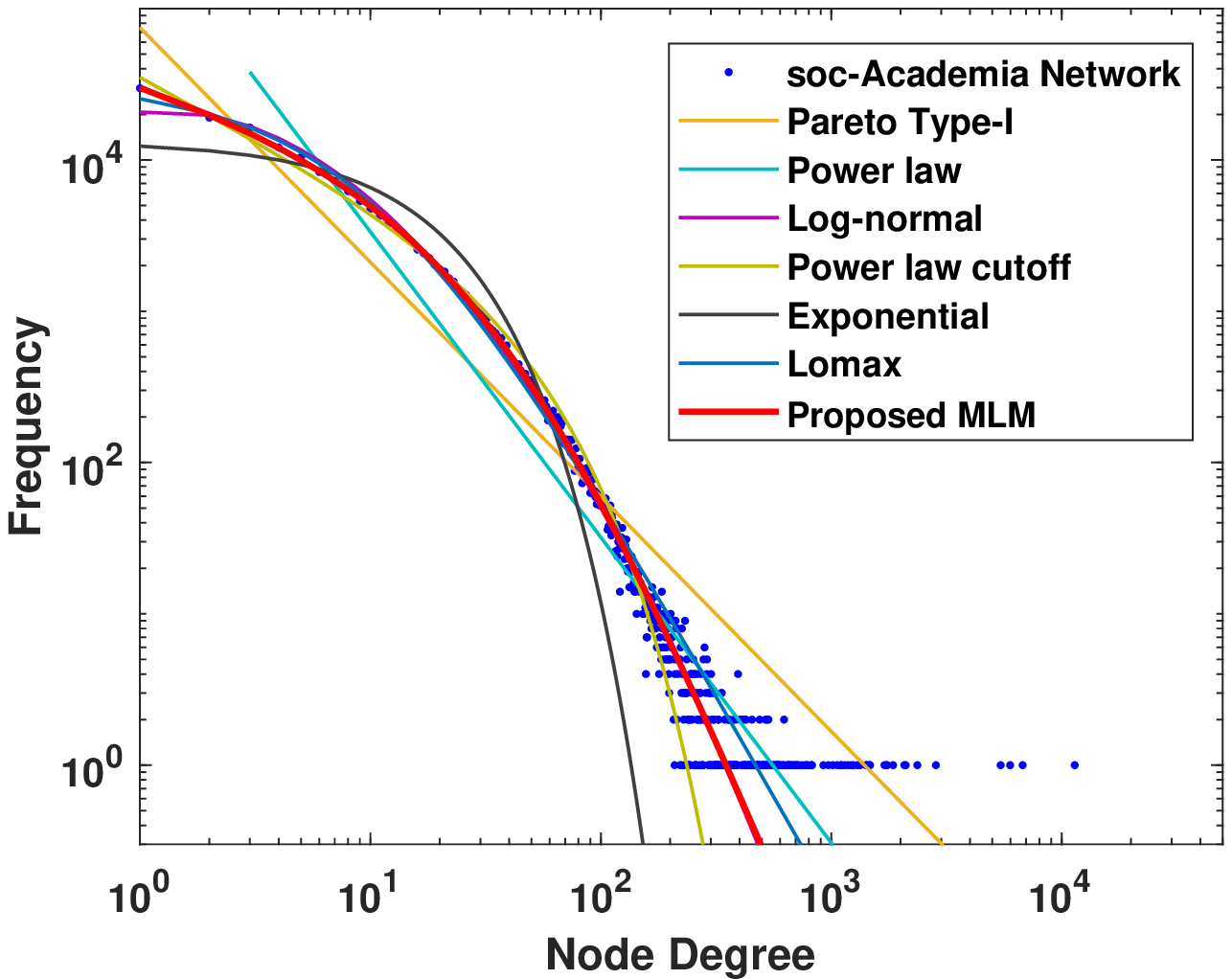}
\includegraphics[width=7cm, height=5cm]{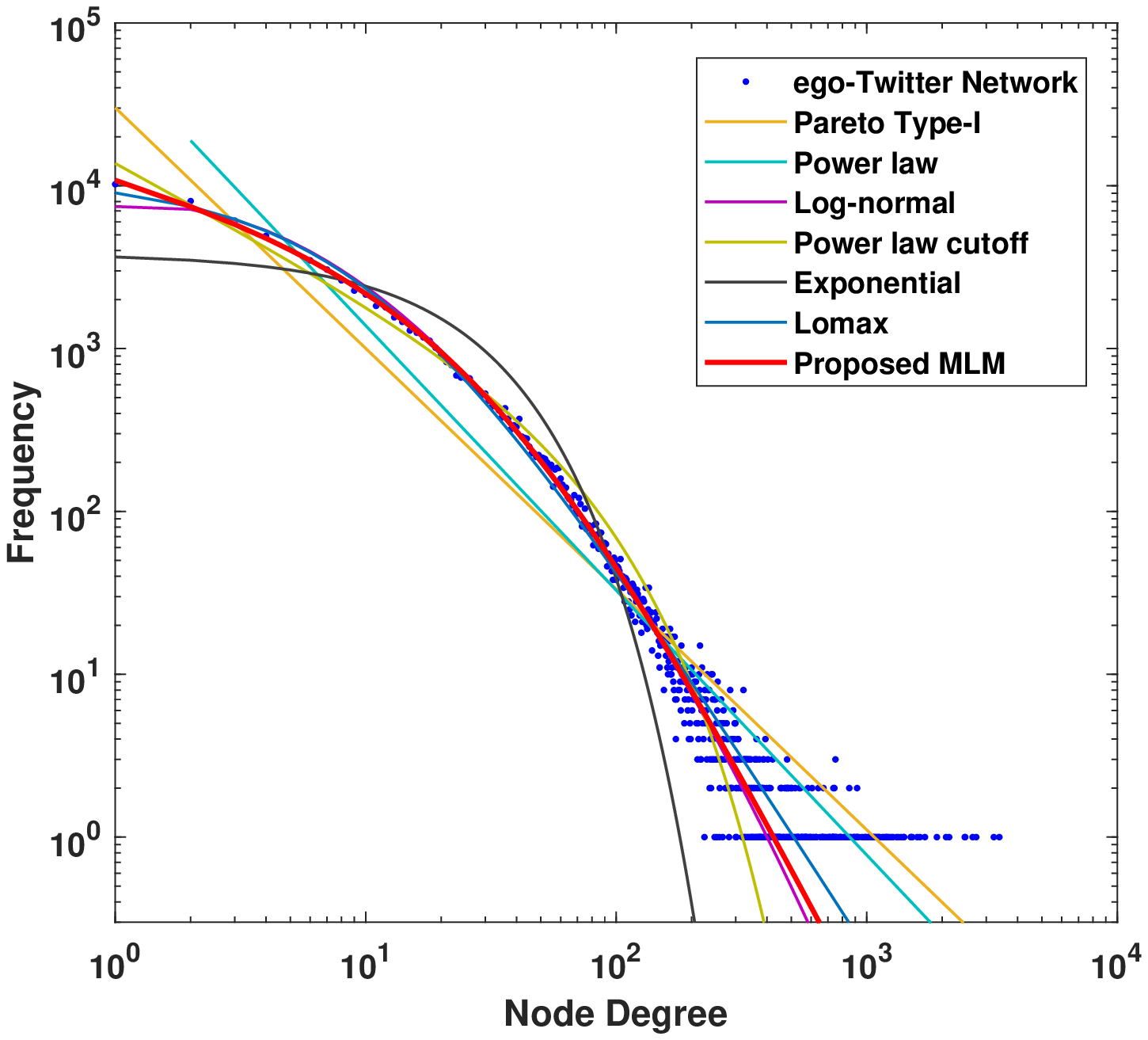}
} \caption{Degree distribution of soc-Academia and ego-Twitter
networks in log-log scale} \label{fig_mlm_1}
\end{figure}

\begin{figure}[H]
\centering \makebox[\textwidth]{
\includegraphics[width=7cm, height=5cm]{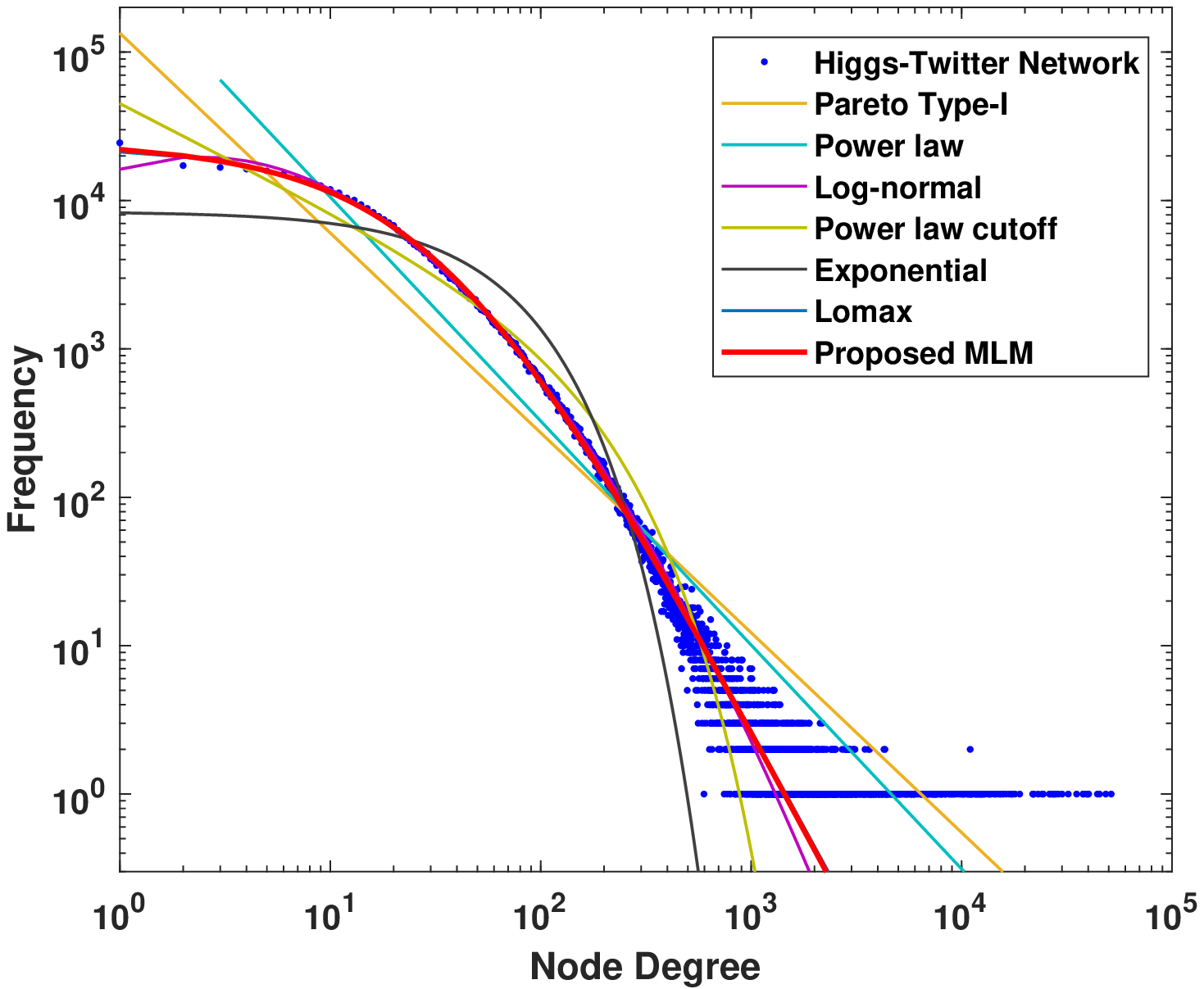}
\includegraphics[width=7cm, height=5cm]{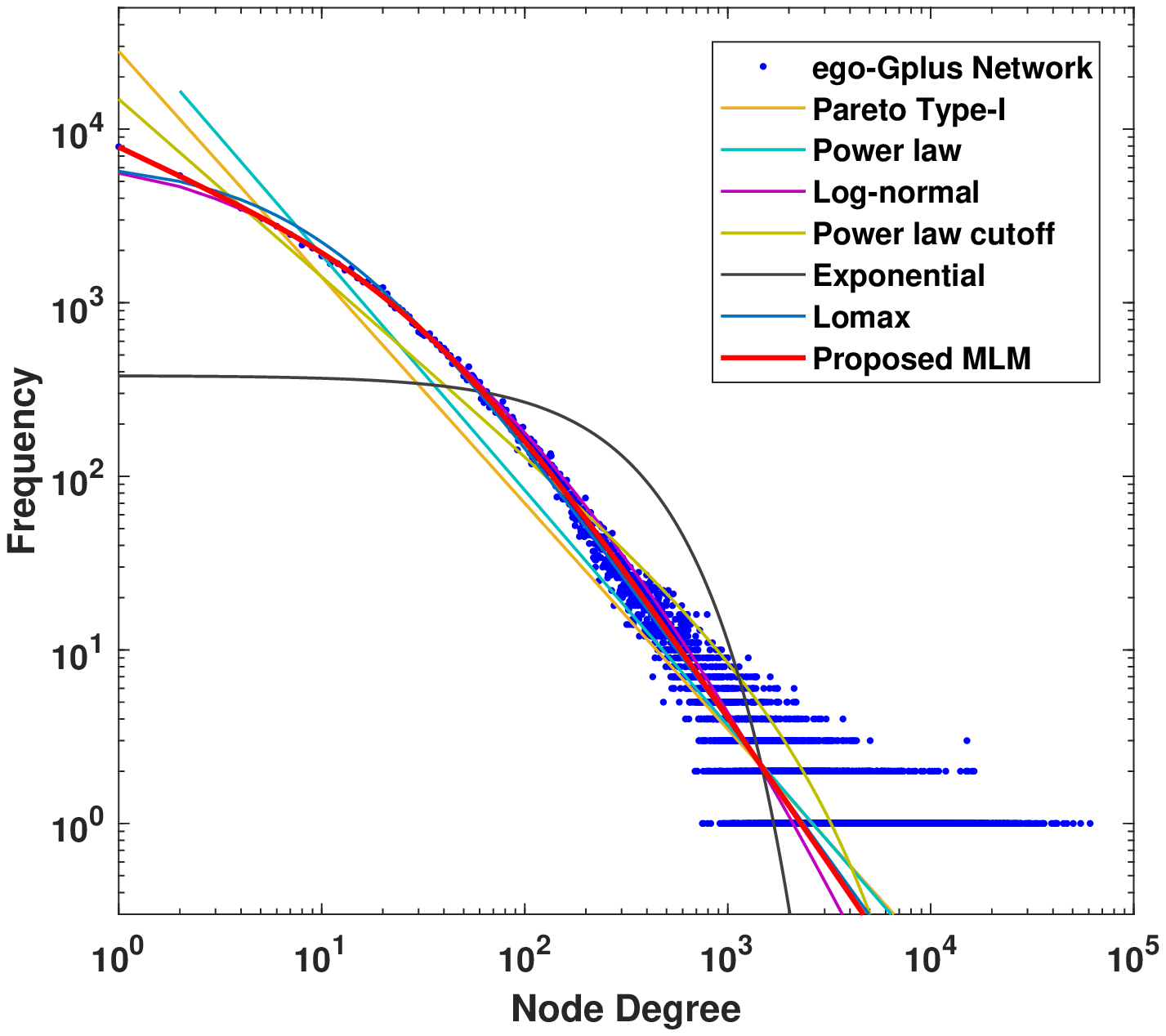}
} \caption{Degree distribution of Higgs-Twitter and ego-Gplus
networks in log-log scale} \label{fig_mlm_2}
\end{figure}

\begin{figure}[H]
\centering \makebox[\textwidth]{
\includegraphics[width=7cm, height=5cm]{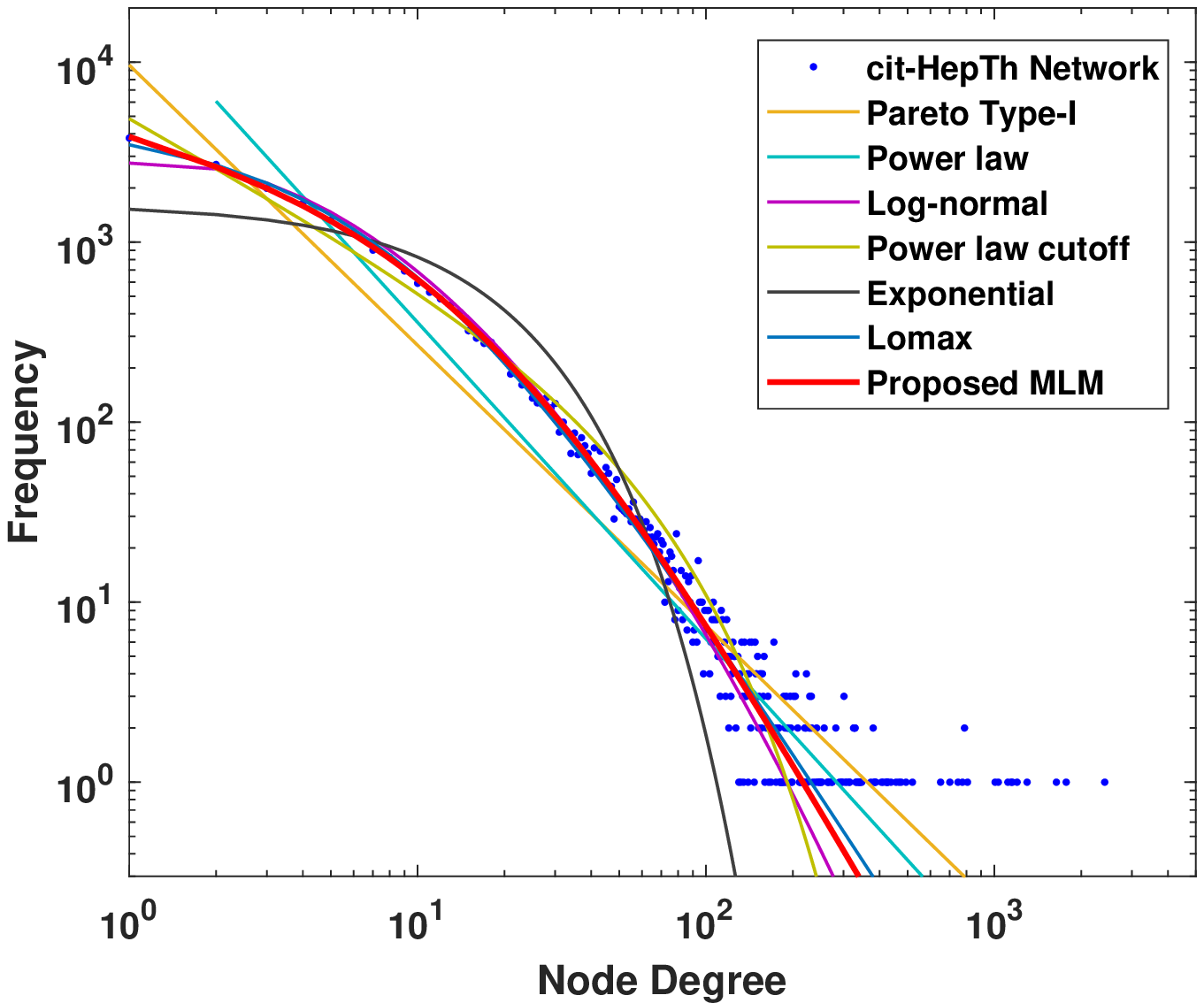}
\includegraphics[width=7cm, height=5cm]{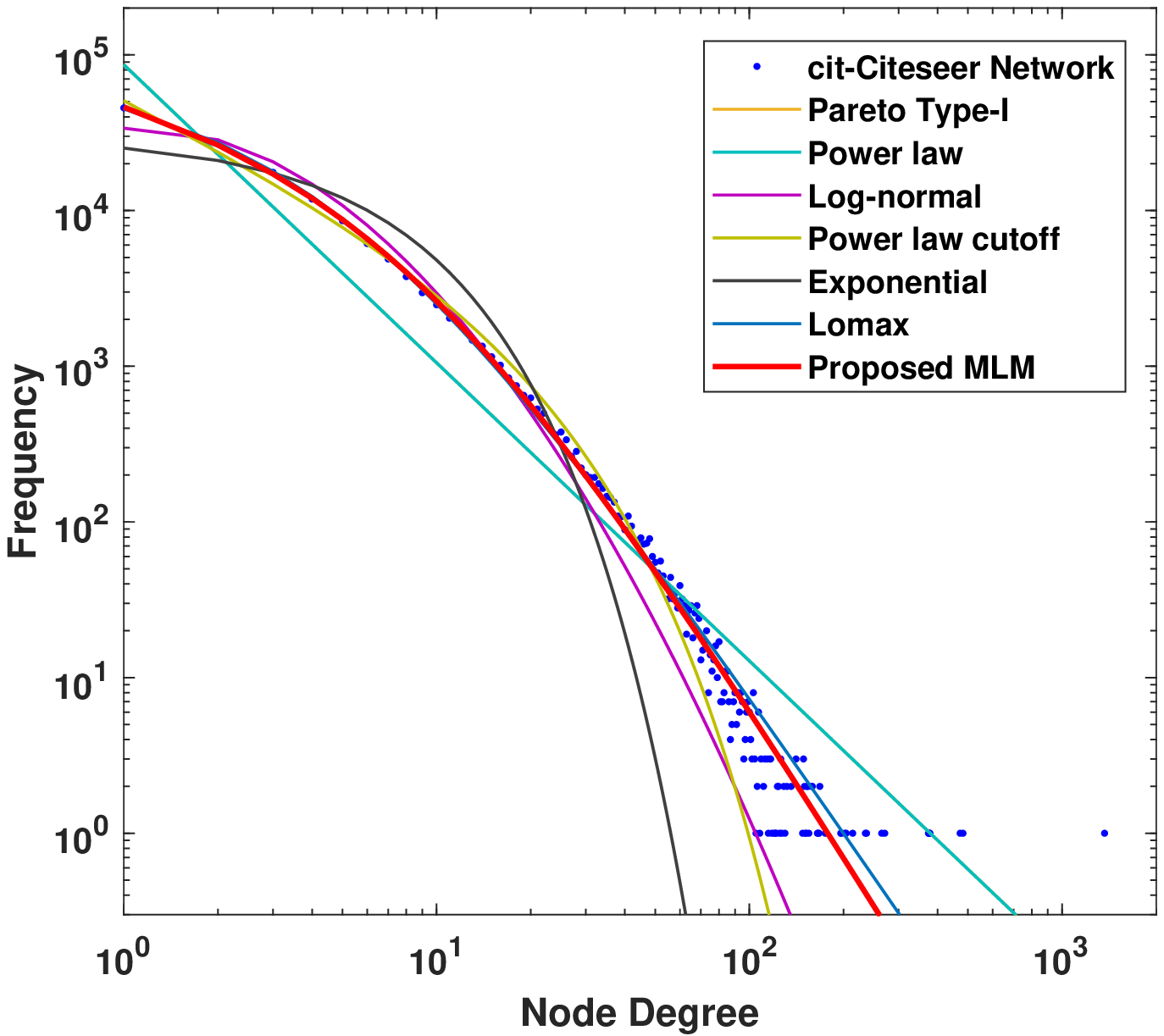}
} \caption{Degree distribution of cit-HepTh and cit-Citeseer
networks in log-log scale} \label{fig_mlm_3}
\end{figure}


\begin{figure}[H]
\centering \makebox[\textwidth]{
\includegraphics[width=7cm, height=5cm]{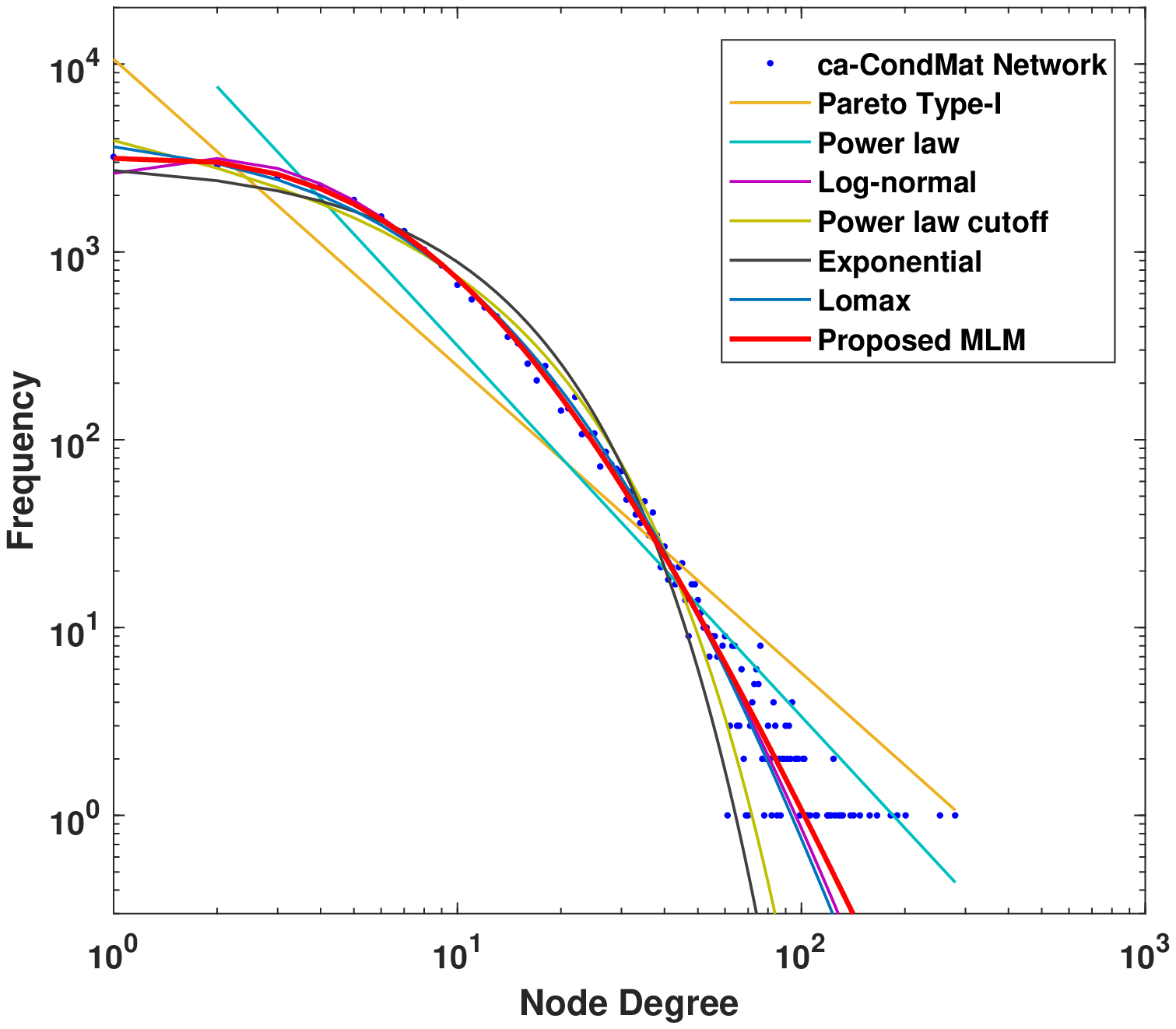}
\includegraphics[width=7cm, height=5cm]{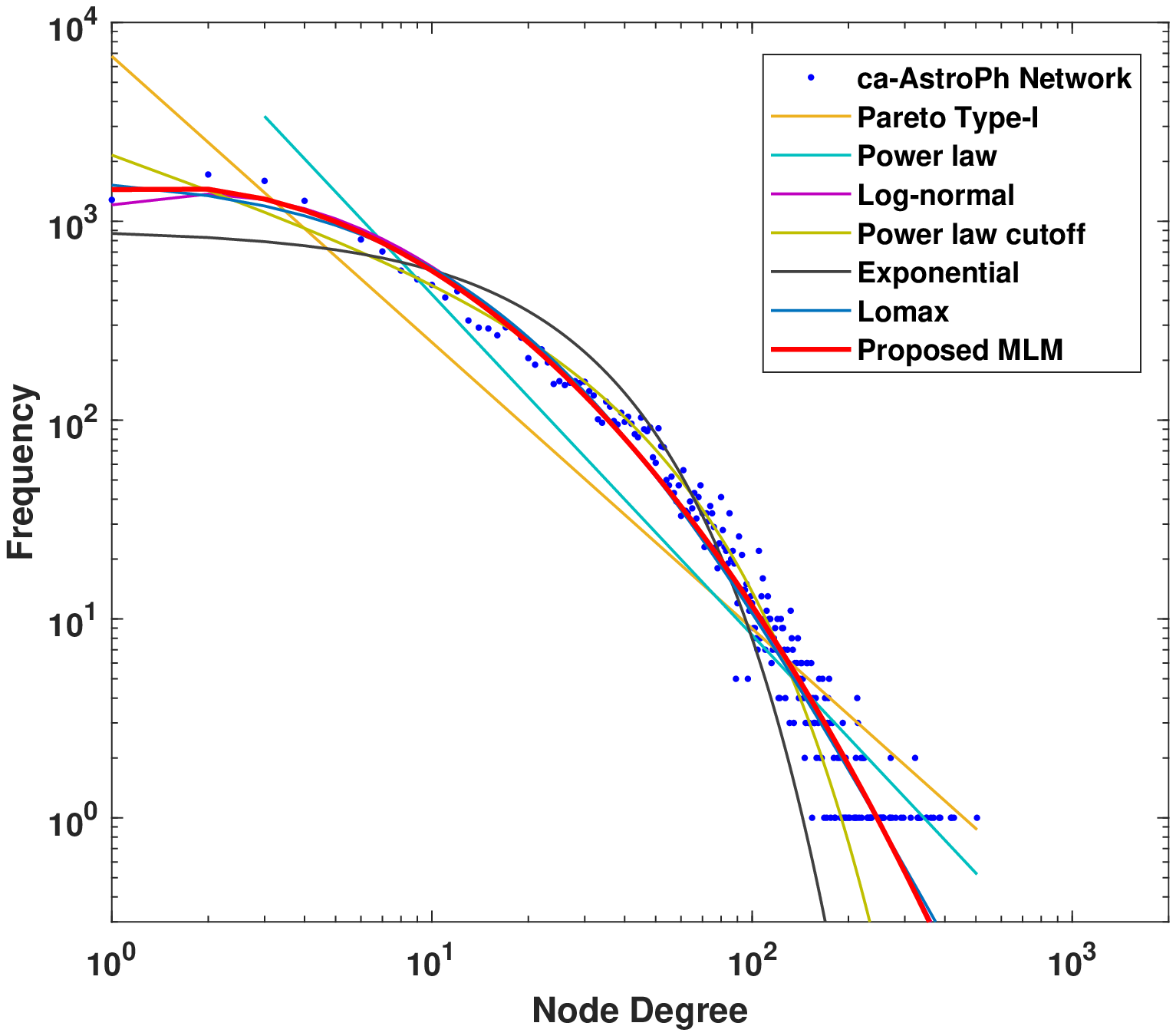}
} \caption{Degree distribution of ca-CondMat and ca-AstroPh networks
in log-log scale} \label{fig_mlm_4}
\end{figure}


\begin{figure}[H]
\centering \makebox[\textwidth]{
\includegraphics[width=7cm, height=5cm]{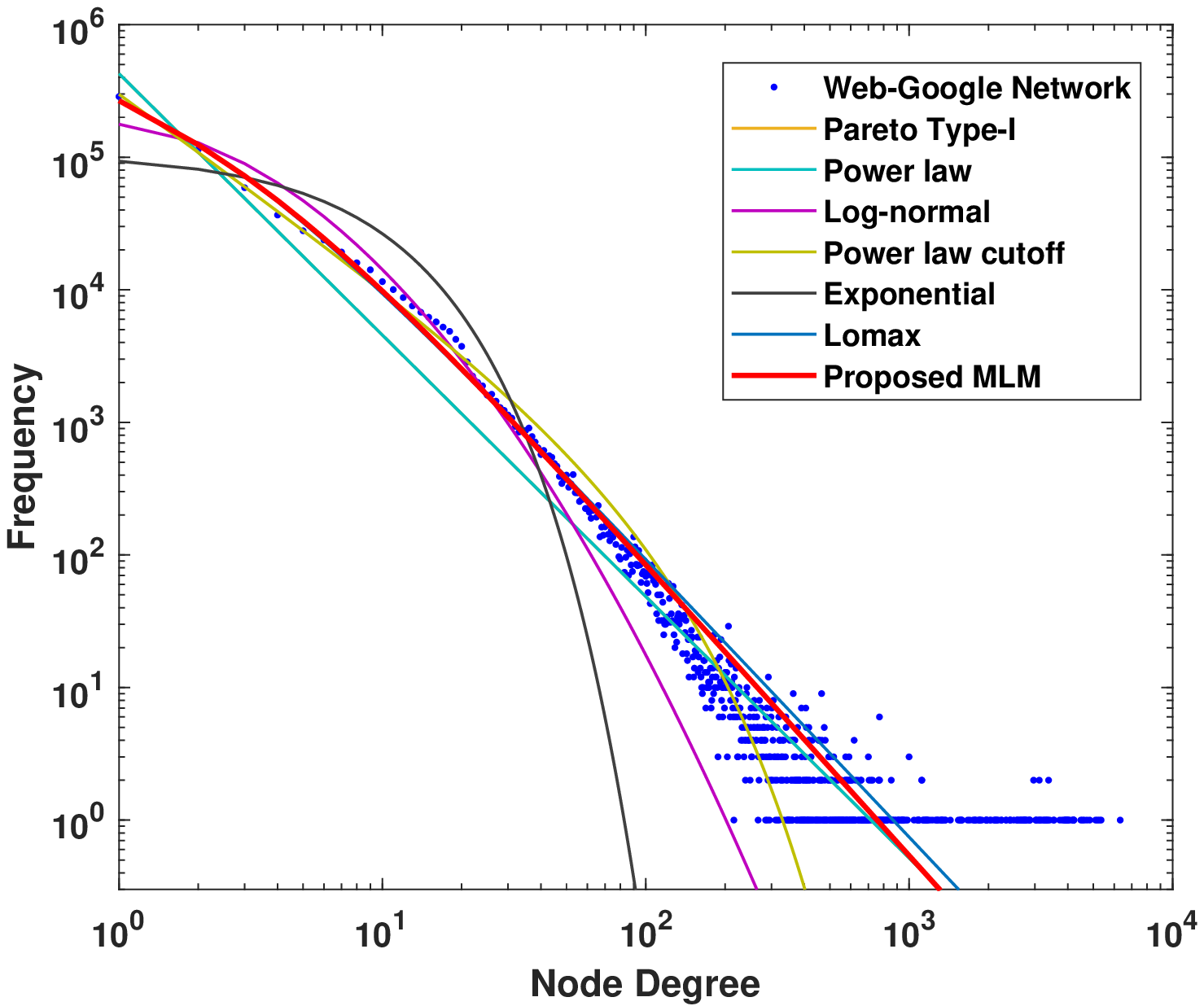}
\includegraphics[width=7cm, height=5cm]{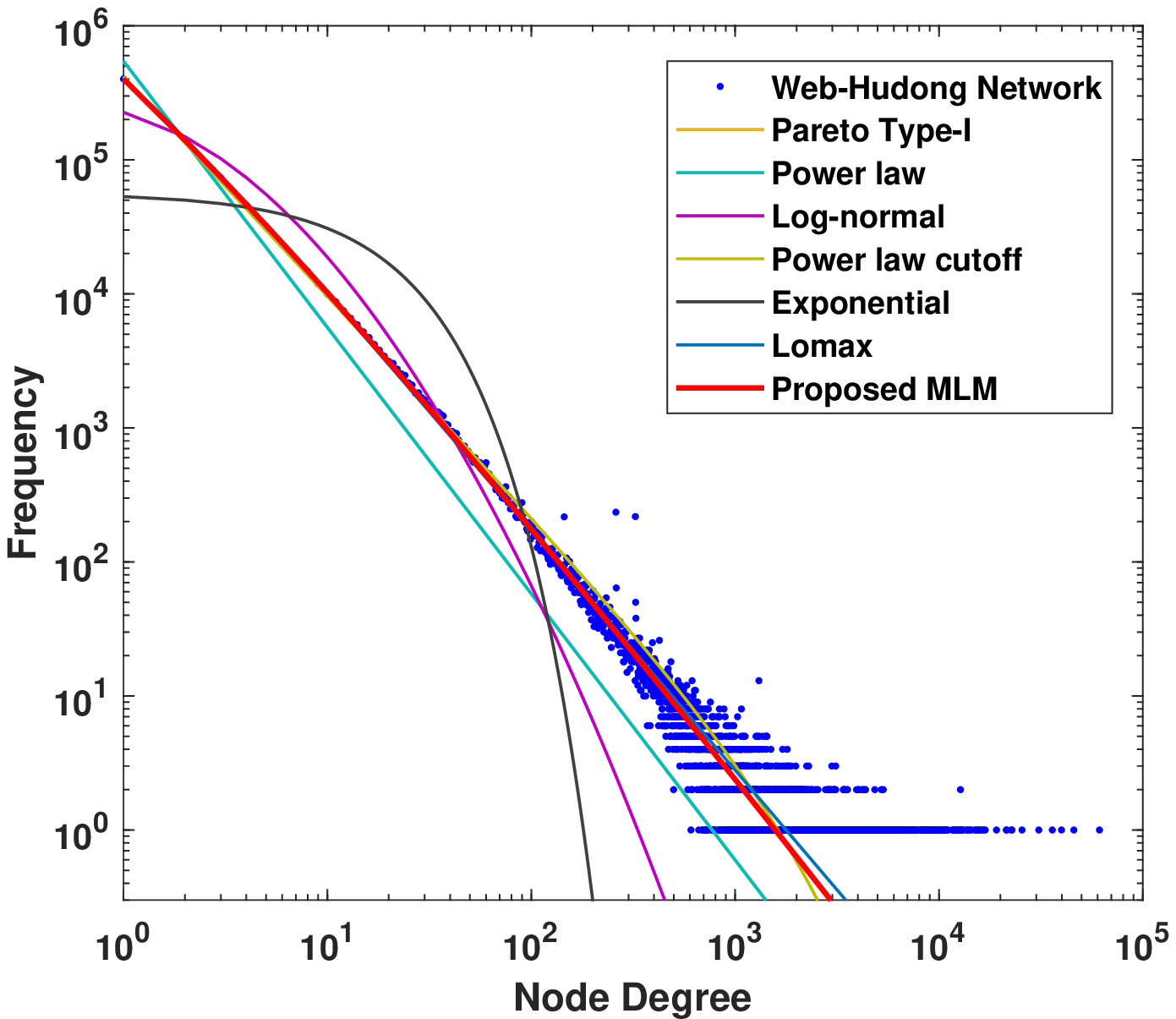}
} \caption{Degree distribution of Web-Google and Web-Hudong networks
in log-log scale} \label{fig_mlm_5}
\end{figure}

\begin{figure}[H]
\centering \makebox[\textwidth]{
\includegraphics[width=7cm, height=5cm]{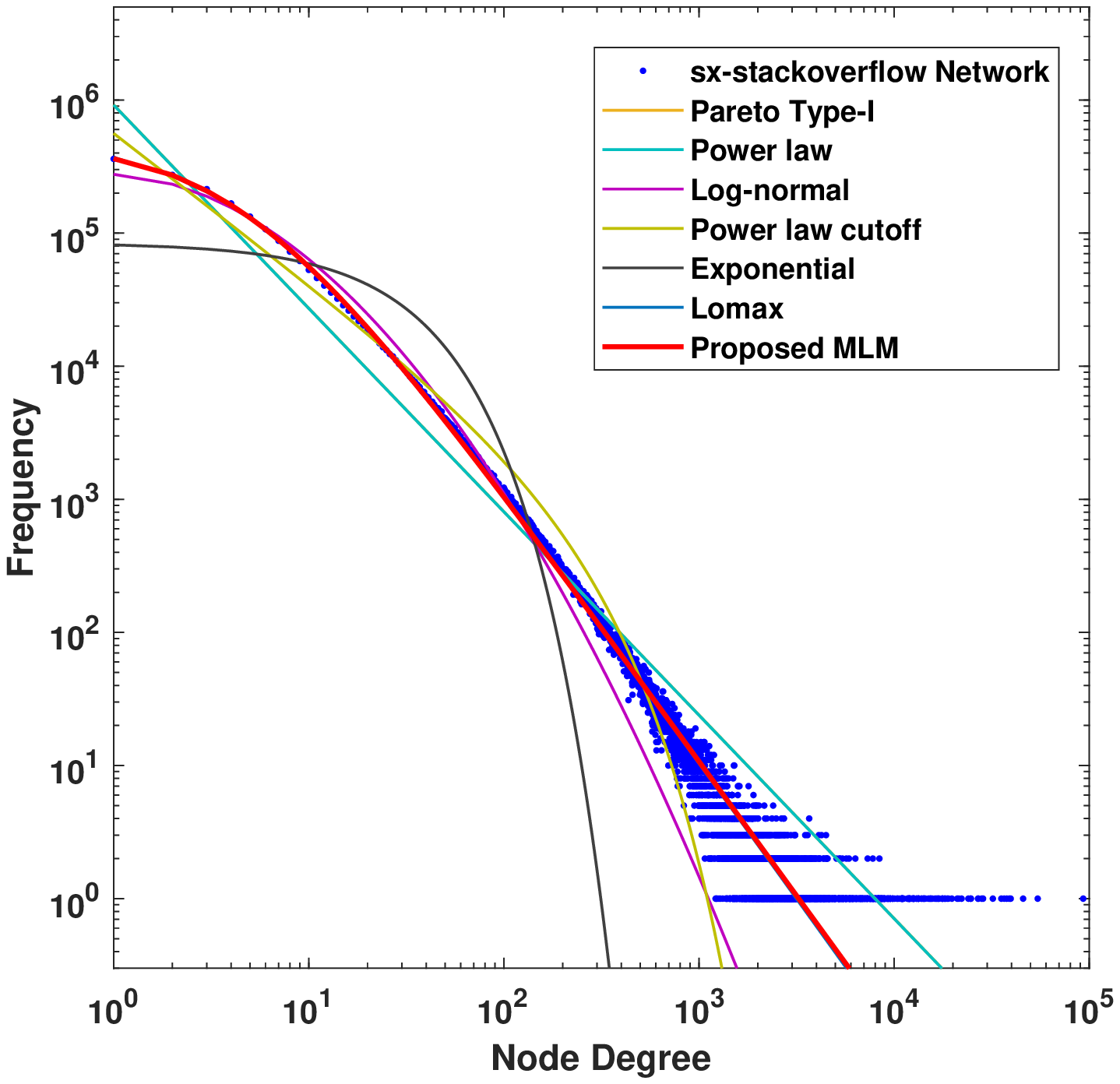}
\includegraphics[width=7cm, height=5cm]{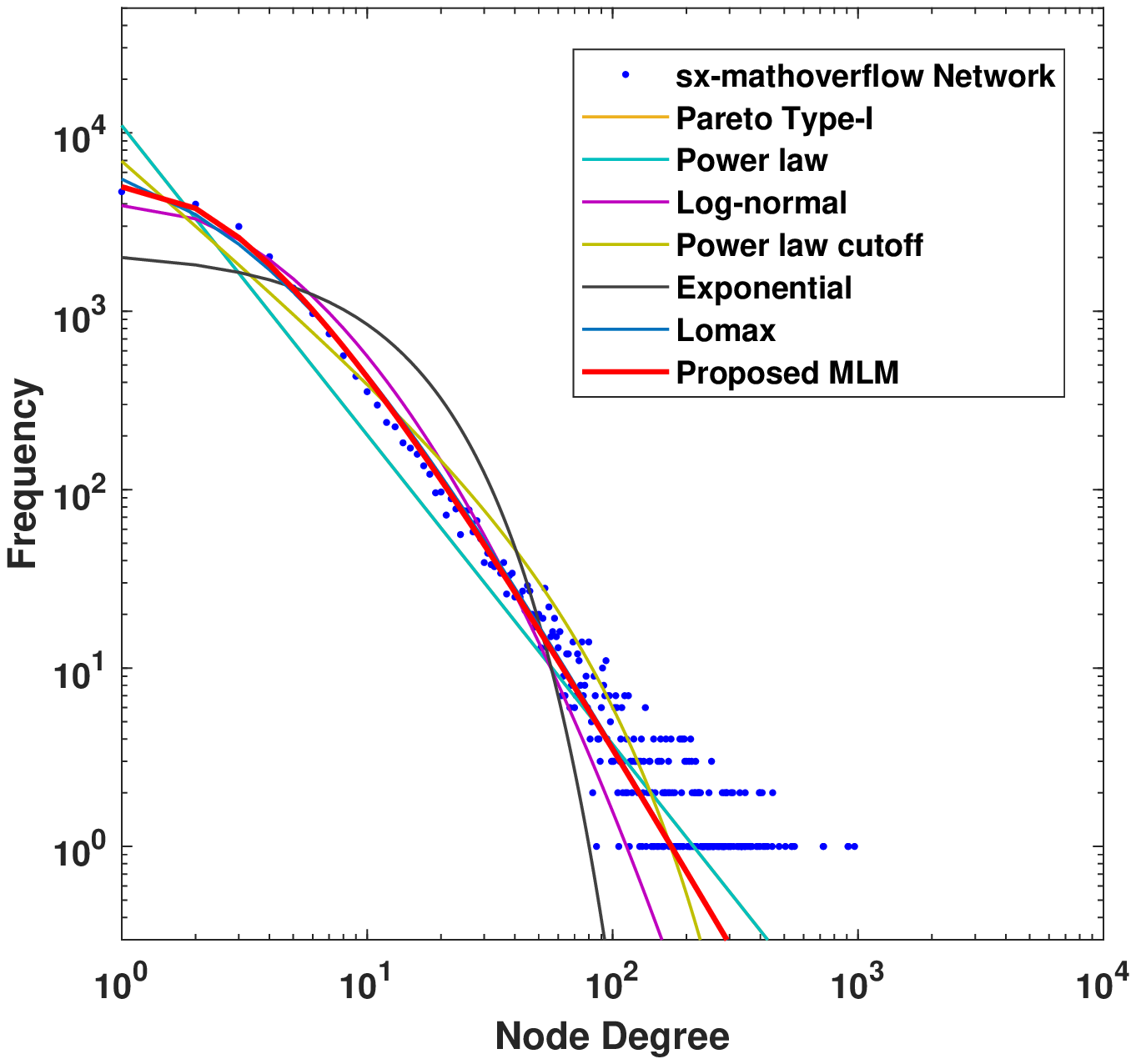}
} \caption{Degree distribution of sx-stack overflow and
sx-mathoverflow networks in log-log scale} \label{fig_mlm_6}
\end{figure}

\textbf{\begin{figure}[H] \centering \makebox[\textwidth]{
\includegraphics[width=7cm, height=5cm]{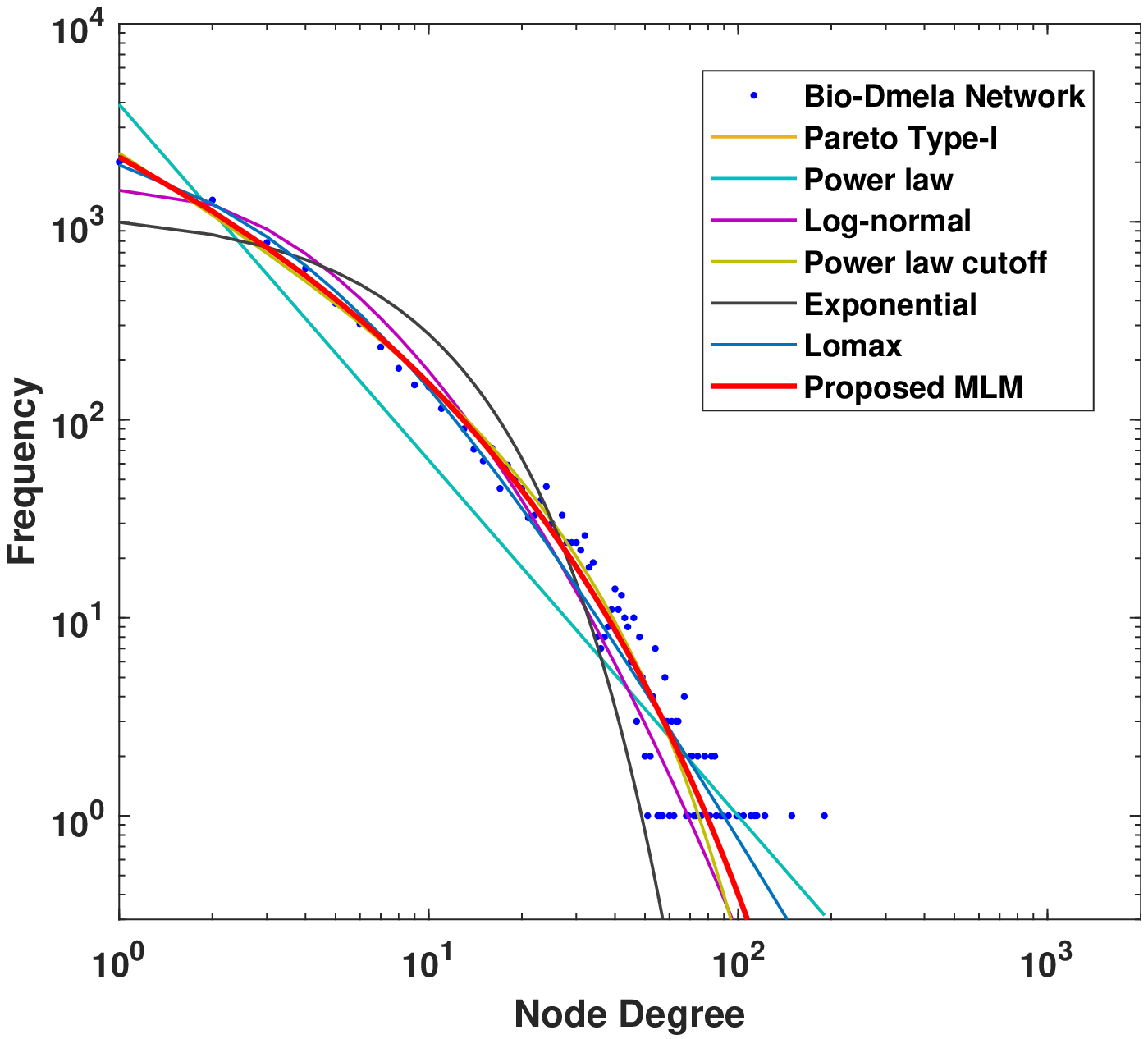}
\includegraphics[width=7cm, height=5cm]{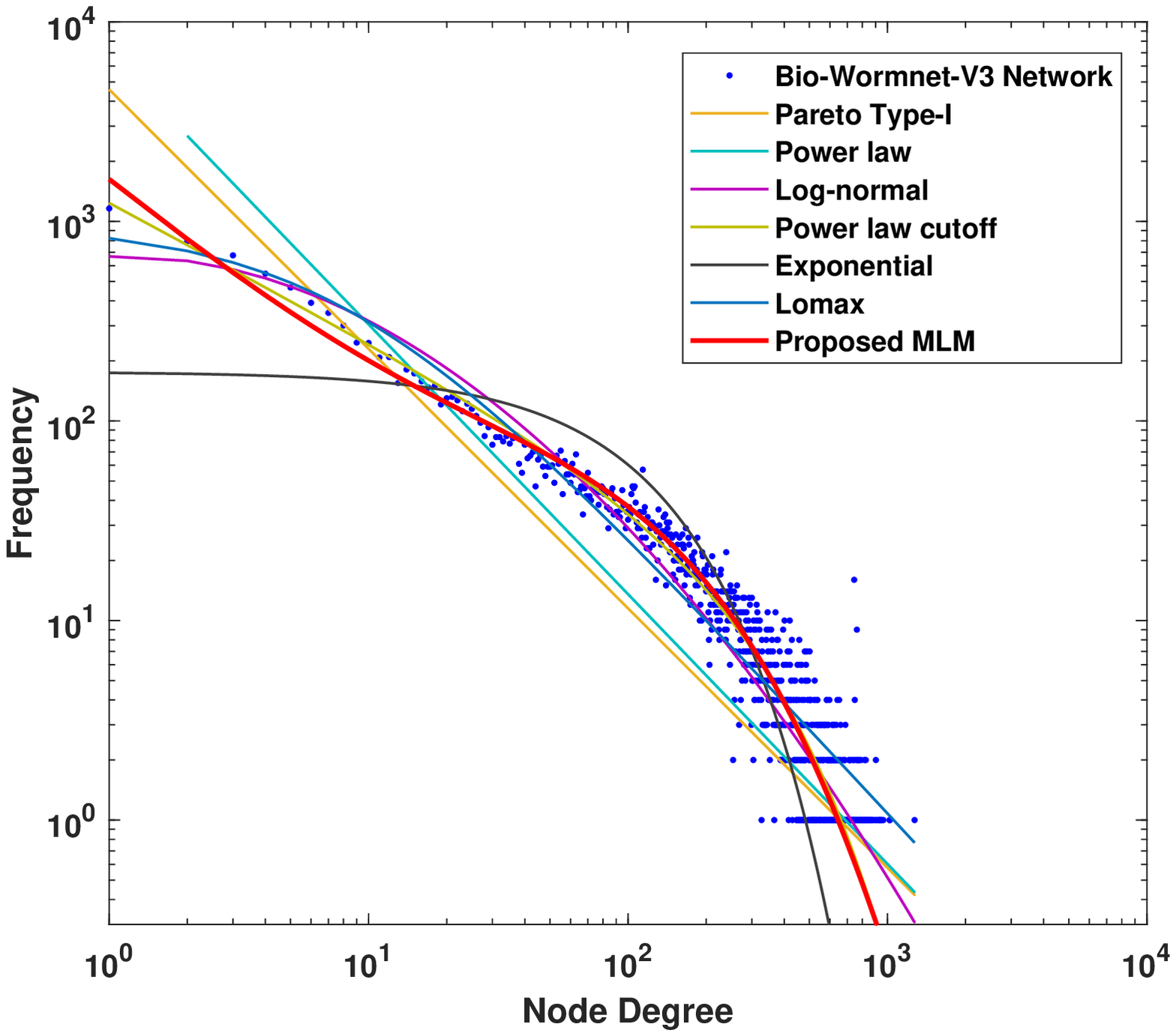}
} \caption{Degree distribution of Bio-Dmela and Bio-Wormnet-V3
networks in log-log scale} \label{fig_mlm_7}
\end{figure}}

\begin{figure}[H]
\centering \makebox[\textwidth]{
\includegraphics[width=7cm, height=5cm]{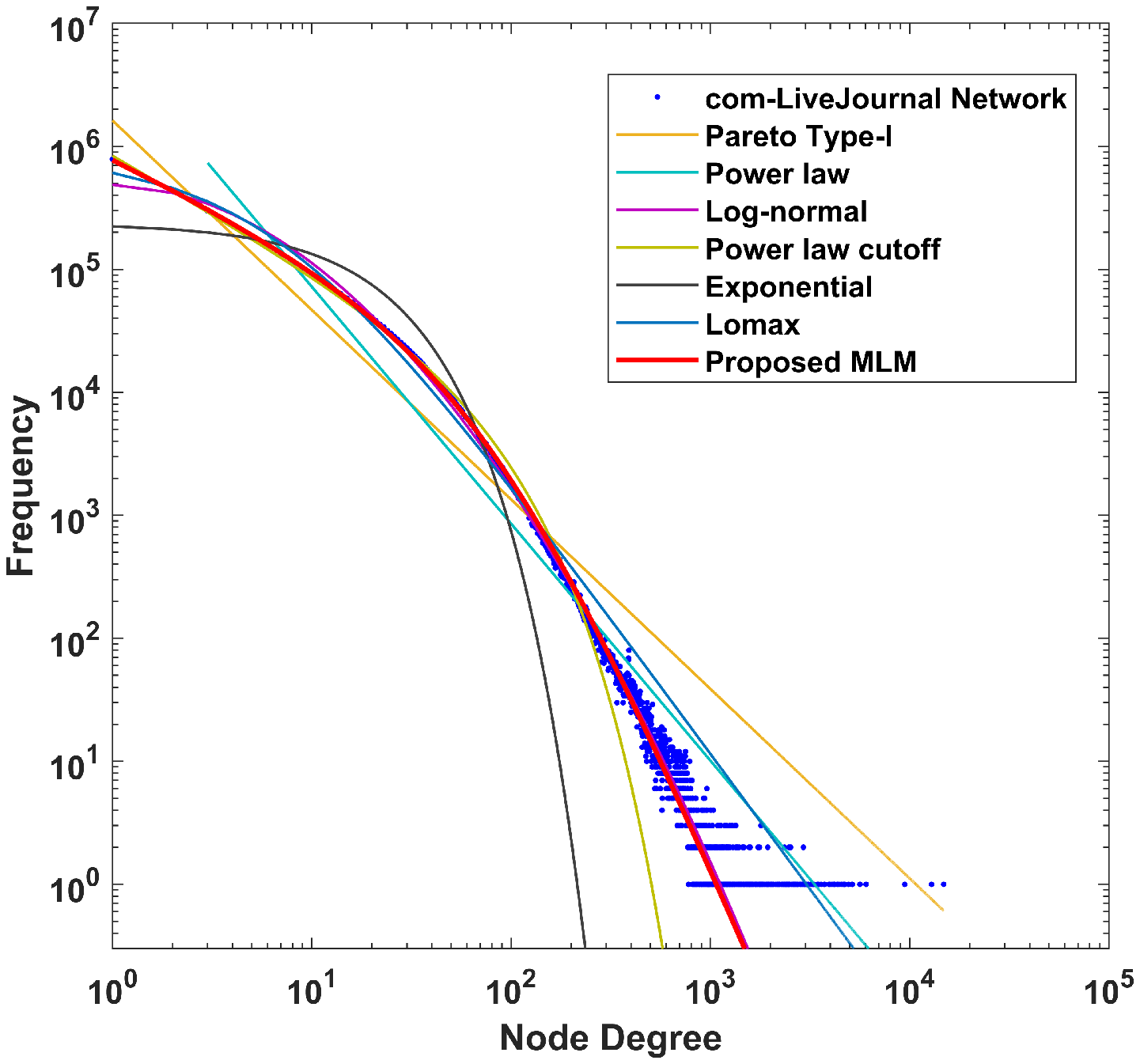}
\includegraphics[width=7cm, height=5cm]{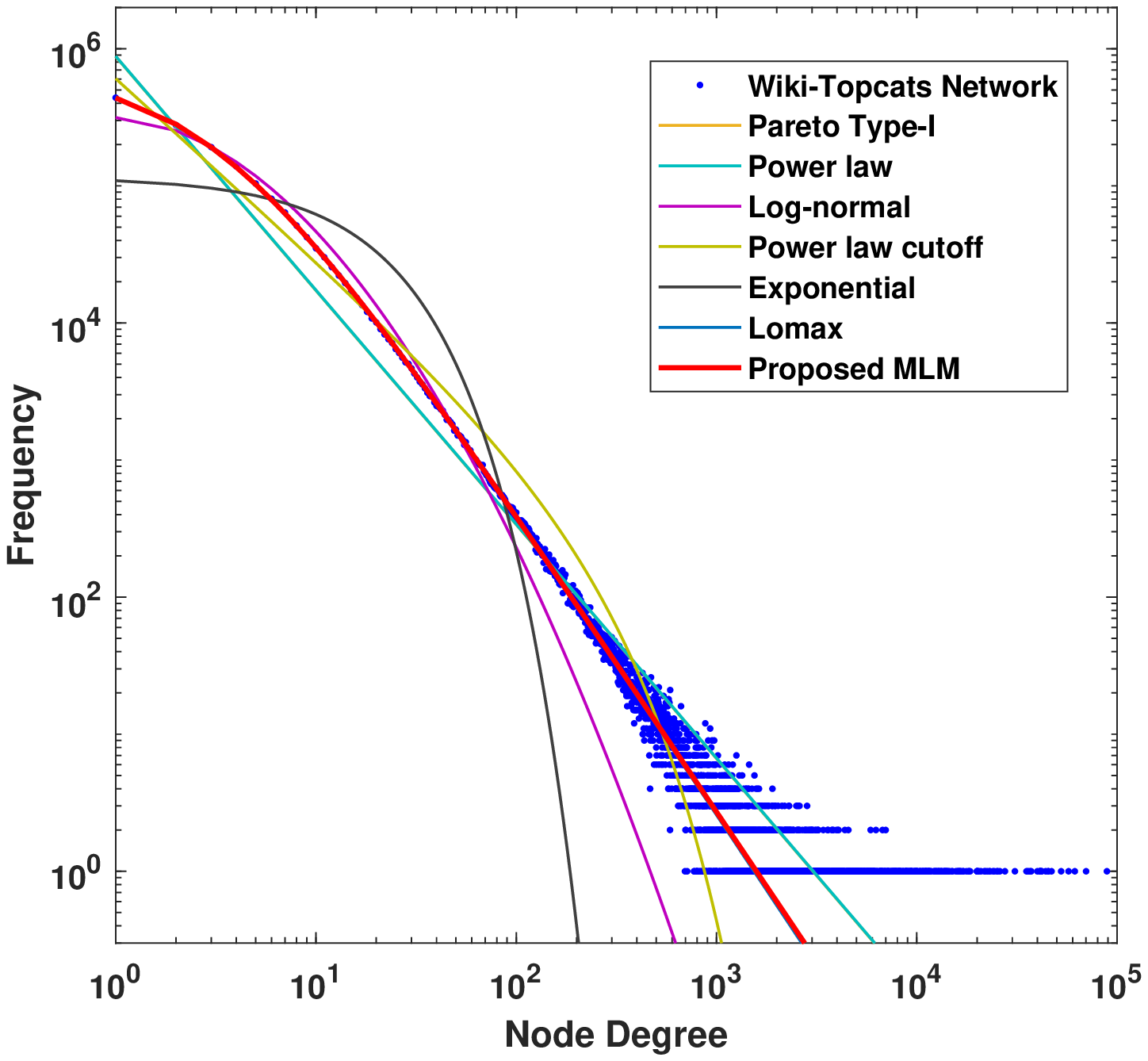}
} \caption{Degree distribution of LiveJournal and Wiki-Topcats
networks in log-log scale} \label{fig_mlm_8}
\end{figure}

The effectiveness of the proposed MLM distribution can also be
verified through the plotting of the fitted results of competitive
distributions. For this purpose, the log-log plots of the of the
original frequency distribution, the estimated frequency by MLM
distribution and the frequency estimated by power-law, pareto,
log-normal, power-law cutoff and exponential distributions are drawn
for all the networks under consideration. Twenty four such examples
have been provided in Figures \ref{fig_mlm_1}-\ref{fig_mlm_8}. These
are the soc-Academia network, ego-Twitter network, Higgs-Twitter
network, ego-Gplus network, cit-HepTh network, cit-Citeseer network,
ca-CondMat network, ca-AstroPh network, Web-Google network,
web-Hudong network, sx-stackoverflow, sx-mathoverflow, Bio-Dmela
network, Bio-Wormnet-V3 network, com-LiveJournal and-
com-Wiki-Topcats network. Few more plotted results are also provided
in the supplementary section. We have omitted the plot of the
poisson distribution due to its poor performances over all the
networks. It is visually clear From Figures
\ref{fig_mlm_1}-\ref{fig_mlm_8} that the proposed MLM distribution
provides better fit compared to the other competitive distributions
in almost all of the networks since the proposed curve always passes
through the middle of the scatter plot of the observed distribution.
In a few cases the power-law cutoff and log-normal provide a better
fit than the proposed distribution. It is visually clear from
observing the social, biological, brain and citation networks that
the entire node degree distribution can be better represented by the
MLM distribution compared to other heavy tailed distributions.
Thus the proposed MLM distribution, a modification of the Lomax
distribution with non linear exponent in the shape parameter, can be
used for effective and efficient modeling of the entire degree
distribution of real-worlld networks without ignoring the lower
degree nodes. The proposed MLM distribution provides more
flexibility in the degree distribution modeling since the
non-negative shape parameter are assumed to be expressed as a
nonlinear function of the data. Empirical results also suggests the
effectiveness of the proposed MLM distribution compared to others as
depicted through Tables
\ref{table:estparameter}-\ref{table:compared2} and Figures
\ref{fig_mlm_1}-\ref{fig_mlm_8}.

\section{Conclusion and Discussion} \label{conclusion}
In this article, we have proposed a modified Lomax (MLM)
distribution derived from a hierarchical family of Lomax
distributions for flexible and efficient modeling of the entire node
degree distribution of real-world complex networks. The proposed MLM
distribution can be thought of as a generalization of the Lomax
distribution with the nonlinear exponent in the shape parameter. We
have theoretically established that the MLM distribution is
heavy-tailed and right-tailed equivalent to the power-law
distribution. Furthermore, we have shown a sufficient condition for
the existence of the MLE for the parameters of MLM distribution
using the notion of CV. The proposed MLM distribution can find MLE
for the parameters at finite points when the value of CV$ > 1$. We
also theoretically justified that the MLM distribution is a function
with regularly varying tails which belongs to the Maximum domain of
attraction of the Frechet distribution. We have further studied the
asymptotic behaviors of the MLM distribution in this context.

The proposed MLM distribution captures the heavy-tailed and
nonlinear behavior of the  entire degree distributions of real-world
networks in the original and the log-log scale more adroitly. It
also enables us to accurately characterize the degree distribution
pattern which may have a significant impact on analyzing real-world
networks in terms of their social or biological aspects, as the case
may be. We have applied the proposed MLM distribution in modeling
the entire degree distribution over 50 different real-world
empirical data sets taken from diverse fields. Empirical results
suggest that as compared to the power-law distribution or any other
well-known distribution, our proposed MLM distribution produces a
lower fitting error in terms of three statistical tests, viz. RMSE,
KL-divergence, and MAE. We also demonstrated the statistical
significance of the estimated MLM distribution with the help of the
bootstrap Chi-square value. This generalization of the Lomax
distribution by adding an additional parameter in the base model
results in flexible modeling to the entire degree distribution of a
real-world network compared to other heavy-tailed distributions
unlike power-law. The proposed fit distribution sometimes helps us
in better characterization of the evolution process of large scale
real-world networks instead of explicitly performing the empirical
study at each time step. Thus, by simulating the parameters of a
proposed fit MLM distribution, one can easily capture the spatial
structure and dynamical pattern of a real-world network as the
network evolves over time. The dynamic pattern analysis of such
structural properties in real-world networks is one of the future
scopes of research.


\section*{Acknowledgement}
The authors gratefully acknowledge the financial assistance received
from Indian Statistical Institute (I. S. I.) and Visvesvaraya PhD
Scheme awarded by the Government of India.

\bibliographystyle{elsarticle-num}
\bibliography{bibliography} 

\end{document}